\definecolor{myurlcolor}{rgb}{0.8,0,0}
\newtheorem{theorem}{Theorem}[section]
\newtheorem{corollary}[theorem]{Corollary}
\newtheorem{lemma}[theorem]{Lemma}
\newtheorem{proposition}[theorem]{Proposition}
\theoremstyle{definition}
\newtheorem{definition}[theorem]{Definition}
        \newcommand{\be}{\begin{equation}}
        \newcommand{\ee}{\end{equation}}
        \newcommand{\ba}{\begin{eqnarray}}
        \newcommand{\ea}{\end{eqnarray}}
        \newcommand{\ban}{\begin{eqnarray*}}
        \newcommand{\ean}{\end{eqnarray*}}
        \newcommand{\barr}{\begin{array}}
        \newcommand{\earr}{\end{array}}
\renewcommand{\to}{\rightarrow}
\newcommand{\lan}{{\langle}}
\newcommand{\ran}{{\rangle}}
\newcommand{\ep}{{\epsilon}}
\newcommand{\al}{{\alpha}}
\newcommand{\varep}{\varepsilon} %
\newcommand{\tr}{{\rm tr}}
\newcommand{\h}{{\mathcal H}}
\newcommand{\R}{{\mathbb R}}
\newcommand{\C}{{\mathbb C}}
\newcommand{\N}{{\mathbb N}}
\newcommand{\Z}{{\mathbb Z}}
\newcommand{\A}{{\mathscr A}}
\newcommand{\calE}{{\mathcal{E}}}
\newcommand{\bh}{{\mathscr{B}(\mathscr{H})}} 
\newcommand{\Hil}{{\mathscr H}} 
\newcommand{\com}{{\mathcal K}} 
\newcommand{\ds}{\mathfrak d} 
\newcommand{\dixdom}{\mathcal{L}^{(1, \infty)} } 
\newcommand{\trw}{\text{Tr}_w}
\newcommand{\calH}{\mathcal{H}}
\newcommand{\lip}{\text{Lip}} 
\newcommand{\calJ}{\mathcal{J}} 
\newcommand{\Sn}{\mathcal{S}^n_{n_0, h}}
\newcommand{\scrE}{\mathscr{E}}
\newcommand{\diam}{\text{diam}}
\tikzset{
    ncbar angle/.initial=90,
    ncbar/.style={
        to path=(\tikztostart)
        -- ($(\tikztostart)!#1!\pgfkeysvalueof{/tikz/ncbar angle}:(\tikztotarget)$)
        -- ($(\tikztotarget)!($(\tikztostart)!#1!\pgfkeysvalueof{/tikz/ncbar angle}:(\tikztotarget)$)!\pgfkeysvalueof{/tikz/ncbar angle}:(\tikztostart)$)
        -- (\tikztotarget)
    },
    ncbar/.default=0.5cm,
}
\tikzset{square left brace/.style={ncbar=0.5cm}}
\tikzset{square right brace/.style={ncbar=-0.5cm}}
\tikzset{round left paren/.style={ncbar=0.5cm,out=120,in=-120}}
\tikzset{round right paren/.style={ncbar=0.5cm,out=60,in=-60}}
\begin{document}
\title{Spectral Triples for the Variants of the Sierpinski Gasket}
\author{
\begin{tabular}{ccc}
Andrea Arauza Rivera \footnote{Department of Mathematics, University of California, Riverside CA, 92521, USA}
\\
\small arauza@math.ucr.edu 
\end{tabular}
}

\maketitle

\begin{abstract}
Fractal geometry is the study of sets which exhibit the same pattern at multiple scales. Developing tools to study these sets is of great interest. One step towards developing some of these tools is recognizing the duality between topological spaces and commutative $C^\ast$-algebras. When one lifts the commutativity axiom, one gets what are called noncommutative spaces and the study of noncommutative geometry. The tools built to study noncommutative spaces can in fact be used to study fractal sets. In what follows we will use the spectral triples of noncommutative geometry to describe various notions from fractal geometry. We focus on the fractal sets known as the harmonic Sierpinski gasket and the stretched Sierpinski gasket, and show that the spectral triples constructed in \cite{cil} and \cite{lapsar} can recover the standard self-affine measure in the case of the harmonic Sierpinski gasket and the Hausdorff dimension, geodesic metric, and Hausdorff measure in the case of the stretched Sierpinski gasket. 
\end{abstract}

\section{Introduction}

It is a tradition in mathematics to take problems in geometry and turn them into problems in algebra. This shift in perspective often brings with it new approaches and various algebraic tools for solving problems. There is a well-known duality between the category of compact Hausdorff topological spaces and the category of commutative unital $C^\ast$-algebras. Given a compact Hausdorff topological space $X$, one can study various topological properties of $X$ by instead studying the algebraic properties of the $C^\ast$-algebra of continuous functions on $X$, written $C(X)$. To recover the topological space $X$ from the $C^\ast$-algebra $C(X)$, one considers the set of continuous nonzero $\ast$-homomorphisms from $C(X)$ to $\C$, called the Gelfand spectrum of $C(X)$. The Gelfand spectrum of $C(X)$ is homeomorphic to $X$ when given the Gelfand-topology. 

In order to step towards noncommutativity, we use a theorem of Gelfand and Naimark which states that any $C^\ast$-algebra is isometrically $\ast$-isomorphic (i.e. isomorphic as $C^\ast$-algebras) to some closed subalgebra of the bounded operators on a Hilbert space. 
Note that there is no mention of commutativity in the Gelfand-Naimark theorem, so one can drop the commutativity requirement on the $C^\ast$-algebra and study what are known as noncommutative spaces. 
This is the starting point for the study of noncommutative geometry, where one leaves behind the point centered view of geometry and opts for a more algebraic perspective. In order to make this shift in perspective fruitful in the study of fractal geometry, one must have a way of translating important fractal geometric ideas into ideas that can be described with algebraic tools. For this we will use a toolkit known as a spectral triple which consists of three objects, a $C^\ast$-algebra $\A$, a Hilbert space $\Hil$ that carries a faithful representation, $\pi$, from $\A$ to the bounded operators on the Hilbert space, and an essentially self-adjoint unbounded operator $D$ on $\Hil$, satisfying certain conditions.

In our examples, the $C^\ast$-algebra $\A$ will be $C(X)$, where $X$ is one of our fractal sets. Once one has a spectral triple, $(\A, \Hil, D)$, one can begin to formulate notions of dimension, distance, and measure. These are essential when studying the geometry of fractal sets. 

\begin{itemize}
\item For a notion of dimension, one studies the trace of the operators $|D|^{-s}$ for $s>0$, sufficiently large, denoted $\tr(|D|^{-s})$. In our examples, $\tr(|D|^{-s})$ will give a Dirichlet series, and calculating the dimension induced by a spectral triple will amount to finding the abscissa of convergence of this series. 
\item A notion of distance will come from the definitions used in the study of metrics on state spaces found in \cite{co2}, \cite{rif}, \cite{rif2}, \cite{rif3}. For example, on the space of probability measures on a compact metric space, $(X, \rho)$, one can define a metric by
$$ \overline{\rho}(\mu, \nu) = \sup\{|\mu(f) -\nu(f)| :\lip_\rho(f)\leq 1\},$$
where $\lip_\rho(f) = \sup\left\{ \frac{|f(x) - f(y)|}{\rho(x, y)}: x\neq y\right\}$; see \cite{rif}. 
In our setting, the commutator, $[D, \pi(a)]$, where $\pi:\A \to \bh$ is the representation from the spectral triple, will act as a ``derivative" for the element $a\in \A$. The norm $\|[D, \pi(a)]\|$ can then act like a Lipschitz seminorm, of sorts. Thus we may define a metric on a space $X$ by
$$d_X(x, y) = \sup\{|f(x)-f(y)|: f\in C(X), \|[D, \pi(f)]\|\leq 1\}.$$
We note that one of the conditions we impose on the operator $D$ is that the set of $a\in \A$ for which $[D, \pi(a)]$ extends to a bounded operator on $\Hil$, be dense in $\A$. Since $[D, \pi(a)]$ is acting like the derivative of the element $a\in \A$, this condition is essentially guaranteeing the existence of a dense set of ``differentiable" elements in $\A$. This is in analogy to the denseness of $C^1$ functions in $C(X)$. 
\item In order to formulate an operator algebraic notion of measure, one needs a Dixmier trace, $\trw(\cdot)$. One can use a Dixmier trace and the operator $D$ from the spectral triple to create a positive linear functional on the $C^\ast$-algebra $C(X)$. This then gives a measure on the space $X$. The subscript $w$ in the notation $\trw(\cdot)$ indicates the dependence of the Dixmier trace on a choice of extended limit, $w: \ell^\infty \to \C$. For our examples, we determine the measure induced by the Dixmier trace and show that the Dixmier trace is independent of the choice of extended limit, $w$.  
\end{itemize}

Alain Connes \cite{co}, \cite{co2}, proved that one can recover the geometry of a compact spin Riemannian manifold using a spectral triple. In addition, motivated by the work of Michel L. Lapidus and Carl Pomerance \cite{lappo} on fractal strings and their spectra, Connes gives in \cite{co} the construction of a spectral triple for Cantor type fractal subsets of $\R$ and shows that one can recover the Hausdorff measure on these sets.  Since then, various constructions of spectral triples have been used to describe the geometric notions of dimension, distance, and measure on fractals; see \cite{cgis}, \cite{ci}, \cite{cil}, \cite{gu2}, \cite{lap2}, \cite{lap1}, \cite{lapsar}. A program for how one can apply the methods from noncommutative geometry to fractal geometry was given in \cite{lap2} and \cite{lap1} by Michel L. Lapidus. In particular, \cite{lap2} and \cite{lap1} gave methods by which one may connect aspects of noncommutative geometry and analysis on fractals. 
One of the purposes of this work is to generate more examples of spaces for which operator algebraic tools can be used to describe the space's geometry. 

\begin{figure}
\centering
\includegraphics[scale=.18]{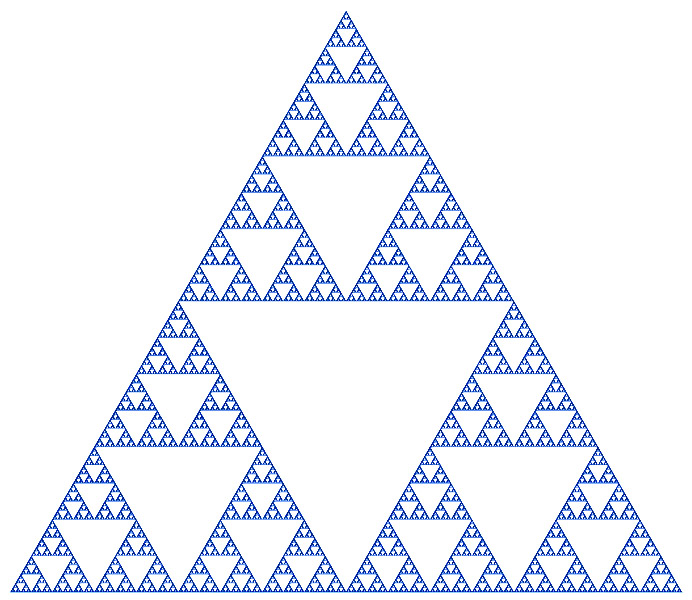} 
\hspace{1in}
\includegraphics[scale = .43]{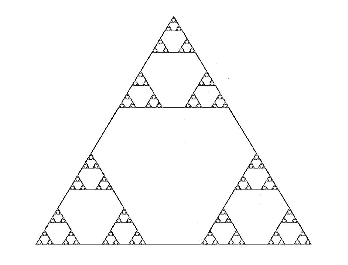}
\caption{Classical Sierpinski gasket (left); Stretched Sierpinski gasket of parameter $\al$ (right) \cite{arf}.}
\label{sierplus}
\end{figure}

The fractal sets that will be the focus of this work are the harmonic Sierpinski gasket, $K_H$, and the stretched Sierpinski gasket, $K_\al$; see Figures \ref{sierplus} and \ref{siertohar1}. These are both variants of (i.e. homeomorphic to) the classical Sierpinski gasket, $SG$, but have features that the classical Sierpinski gasket does not have. For example, the harmonic Sierpinski gasket has the property that between any two points there is a $C^1$ path connecting them, giving a sort of ``smooth fractal manifold" structure; see \cite{kig3}. In order to define the harmonic Sierpinski gasket, we review some basic definitions from the study of analysis on fractals. One of the goals of noncommutative fractal geometry is to establish connections between the use of spectral triples and the study of analysis on fractals. We see that $K_H$ is determined by harmonic functions on $SG$ and that $K_H$ is a self-affine set. The spectral triple we use to study $K_H$ is the spectral triple defined in \cite{lapsar}. One of the open problems stated in \cite{lapsar} concerns the possibility of recovering the Hausdorff measure on $K_H$ by using a spectral triple and another operator algebraic tool, a Dixmier trace. Here we show that this conjecture is false. This shows how working with self-affinity (rather than self-similarity) can lead to complications when attempting to study the fractal geometry of a space.  

The stretched Sierpinski gasket, sometimes also called the Hanoi attractor, is another example of a set which is self-affine and not self-similar. This typically causes complications, especially when trying to find a natural measure with some self-similarity/affinity property. The stretched Sierpinski gasket, $K_\al$, has been the subject of various papers \cite{arf}, \cite{afk}, \cite{artep} which study the sets geometry and develop Dirichlet forms for the space. Once one has a Dirichlet form on $K_\al$, one can study the associated Laplacian and its asymptotics. We give a spectral triple for $K_\al$ and prove various results concerning the recovery of the geometry of $K_\al$. This is a first step towards using noncommutative geometry to study analysis and probability theory on $K_\al$. 

In 2008, Erik Christensen, Cristina Ivan, and Michel L. Lapidus gave the construction of a spectral triple for the Sierpinski gasket and other fractals sets which recovers the Hausdorff dimension, the geodesic distance, and the Hausdorff measure; see \cite{ci}, \cite{cil}. Christensen, Ivan, and Lapidus first built a spectral triple for a circle, then used this to give a spectral triple to each triangle in the graph approximations to $SG$; see Figure \ref{sierstages}. They then defined a spectral triple for $SG$ by taking a direct sum of spectral triples over the triangles in $SG$. In 2015, Michel L. Lapidus and Jonathan J. Sarhad gave the construction of a spectral triple for certain length spaces and showed that their spectral triple recovers the geodesic metric on these length spaces; see \cite{lapsar}. This more general construction of a spectral triple is for length spaces made up of rectifiable $C^1$ curves. Lapidus and Sarhad gave a spectral triple for each rectifiable $C^1$ curve and then took a direct sum to get a spectral triple for the length space. We will give a more detailed description of this construction in Section 3, as this is the spectral triple that we will be using. 

\bigskip

The remaining sections are organized as follows. 
\bigskip

In Section 2, we give the definitions of the classical Sierpinski gasket, the harmonic Sierpinski gasket, and the stretched Sierpinski gasket. We also fix some notation to be used in the remaining sections.

Section 3 includes the definition of a spectral triple and how one may use the tools in a spectral triple to formulate notions of dimension, distance, and measure on sets. We also give a more detailed description of the work in \cite{lapsar}, including the construction of the spectral triple for a rectifiable $C^1$ curve. 

In Section 4, we describe how Lapidus and Sarhad, built a spectral triple for the harmonic Sierpinski gasket. We show that the spectral dimension, $\ds_H$, induced by this spectral triple satisfies
$$1\leq \ds_H \leq \frac{\log{3}}{\log{5} - \log{3}}.$$
In \cite{lapsar}, Lapidus and Sarhad conjectured that the spectral triple they built would recover the Hausdorff measure. We show that this is false and find that the measure recovered by the spectral triple is in fact the unique measure satisfying a certain self-affinity condition. 

Section 5 focuses on the stretched Sierpinski gasket, $K_\al$. We first show that the spectral triple based on the curves that make up $K_\al$ can recover the Hausdorff dimension and the geodesic metric. We note that the stretched Sierpinski gasket does not satisfy the conditions needed for the result on the recovery of the geodesic metric given in \cite{lapsar} and provide a proof of the recovery of the metric here. We then show that the Hausdorff measure on $K_\al$ is the unique measure satisfying a certain self-affinity (really self-similarity) condition and that the measure recovered by the spectral triple for $K_\al$ is just this measure (i.e. the Hausdorff measure). 

We conclude in Section 6 with some remarks on what can be done in the future.


\section{Preliminaries} 
In what follows we will focus on the fractal sets known as the Sierpinski gasket, $SG$, the harmonic Sierpinski gasket, $K_H$, and the stretched Sierpinski gasket, $K_\al$. Let us review how one constructs these sets.

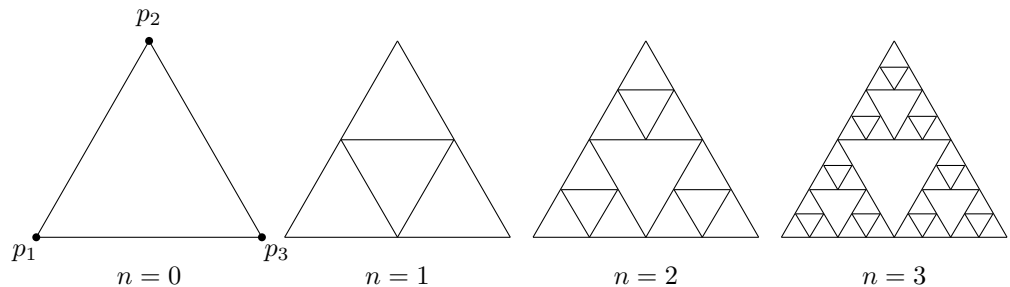
\begin{figure}
\begin{tikzpicture}
\node(a) at (-0.15, -.2) {$p_1$};
\node(a) at (1.5, 2.9) {$p_2$};
\node(a) at (3.2, -.2) {$p_3$};
\node(a) at (1.5, -.5) {$n=0$};


\node[circle,fill=black,inner sep=0pt,minimum size=3pt] (a) at (0,0) {};
\node[circle,fill=black,inner sep=0pt,minimum size=3pt] (a) at (3/2, 2.598) {};
\node[circle,fill=black,inner sep=0pt,minimum size=3pt] (a) at (3,0) {};

\draw[ - ] (0,0) to (3/2, 2.598); 
\draw[ - ] (3/2, 2.598) to (3,0); 
\draw[ - ] (0, 0) to (3, 0); 
\hspace{1.3in}
\node(a) at (1.5, -.5) {$n=1$};

\draw[ - ] (0,0) to (3/2, 2.598); 
\draw[ - ] (3/2, 2.598) to (3,0); 
\draw[ - ] (0, 0) to (3, 0); 
\draw[ - ] (3/4, 1.29) to (1.5, 0); 
\draw[ - ] (1.5, 0) to (9/4,1.29); 
\draw[ - ] (3/4, 1.29) to (9/4, 1.29); 
\hspace{1.3in}
\node(a) at (1.5, -.5) {$n=2$};

\draw[ - ] (0,0) to (3/2, 2.598); 
\draw[ - ] (3/2, 2.598) to (3,0); 
\draw[ - ] (0, 0) to (3, 0); 
\draw[ - ] (3/4, 1.29) to (1.5, 0); 
\draw[ - ] (1.5, 0) to (9/4,1.29); 
\draw[ - ] (3/4, 1.29) to (9/4, 1.29); 
\draw[ - ] (9/8, 1.95) to (12/8, 1.29); 
\draw[ - ] (9/8, 1.95) to (15/8, 1.95); 
\draw[ - ] (15/8, 1.95) to  (12/8, 1.29); 

\draw[ - ] (3/8, .63) to (9/8, .63); 
\draw[ - ] (9/8, .63) to (.75, 0); 
\draw[ - ] (3/8, .63) to (.75, 0); 

\draw[ - ] (15/8, .63) to  (9/4, 0); 
\draw[ - ] ( 21/8, .63) to (9/4,0); 
\draw[ - ] (21/8 , .63) to (15/8, .63); 
\hspace{1.3in}
\node(a) at (1.5, -.5) {$n=3$};

\draw[ - ] (0,0) to (3/2, 2.598); 
\draw[ - ] (3/2, 2.598) to (3,0); 
\draw[ - ] (0, 0) to (3, 0); 
\draw[ - ] (3/4, 1.29) to (1.5, 0); 
\draw[ - ] (1.5, 0) to (9/4,1.29); 
\draw[ - ] (3/4, 1.29) to (9/4, 1.29); 
\draw[ - ] (9/8, 1.95) to (12/8, 1.29); 
\draw[ - ] (9/8, 1.95) to (15/8, 1.95); 
\draw[ - ] (15/8, 1.95) to  (12/8, 1.29); 

\draw[ - ] (3/8, .63) to (9/8, .63); 
\draw[ - ] (9/8, .63) to (.75, 0); 
\draw[ - ] (3/8, .63) to (.75, 0); 

\draw[ - ] (15/8, .63) to  (9/4, 0); 
\draw[ - ] ( 21/8, .63) to (9/4,0); 
\draw[ - ] (21/8 , .63) to (15/8, .63); 
\draw[ - ] (3/8, 0) to (3/16, .31); 
\draw[ - ] (3/8, 0) to (9/16, .31); 
\draw[ - ] (9/16,.31) to  (3/16, .31); 

\draw[ - ] (15/16, .31) to (9/8, 0); 
\draw[ - ] (9/8, 0) to (21/16, .31); 
\draw[ - ] (15/16, .31) to (21/16, .31); 

\draw[ - ] (9/16, .95) to  (3/4, .63); 
\draw[ - ] ( 15/16, .95) to (3/4,.63); 
\draw[ - ] (9/16 , .95) to (15/16, .95); 
\draw[ - ] (15/8, 0) to (27/16, .31); 
\draw[ - ] (15/8, 0) to (33/16, .31); 
\draw[ - ] (27/16,.31) to  (33/16, .31); 

\draw[ - ] (39/16, .31) to (21/8, 0); 
\draw[ - ] (21/8, 0) to (45/16, .31); 
\draw[ - ] (39/16, .31) to (45/16, .31); 

\draw[ - ] (33/16, .95) to  (9/4, .63); 
\draw[ - ] ( 39/16, .95) to (9/4,.63); 
\draw[ - ] (33/16 , .95) to (39/16, .95); 
\draw[ - ] (15/16, 1.6) to (9/8, 1.29); 
\draw[ - ] (21/16, 1.6) to (9/8, 1.29); 
\draw[ - ] (15/16, 1.6) to  (21/16, 1.6); 

\draw[ - ] (27/16, 1.6) to (15/8, 1.29); 
\draw[ - ] (33/16, 1.6) to (15/8, 1.29); 
\draw[ - ] (33/16, 1.6) to (27/16, 1.6); 

\draw[ - ] (21/16, 2.25) to  (3/2, 1.95); 
\draw[ - ] ( 27/16, 2.25) to (3/2, 1.95); 
\draw[ - ] (27/16 , 2.25) to (21/16, 2.25); 
\end{tikzpicture}
\label{sierstages}
\caption{Graph approximations, $SG_n$, of the Sierpinski gasket.}
\end{figure}

The classical Sierpinski gasket seen in Figure \ref{sierplus} can be defined as the closure of an increasing union of graphs. Consider the equilateral triangle, $T$, with vertices
$$ p_1:= (0,0), \ \ \ \ \ \ p_2 := \left(\frac{1}{2}, \frac{\sqrt{3}}{2}\right), \ \ \ \ \ \ p_3 := (1,0).$$
Define the contraction maps 
$$\left\{f_j:\R^2 \to \R^2: f_j(x) := \frac{1}{2}(x-p_j) +p_j, \ \ j=1, 2, 3\right\}.$$ 
Applying these maps to the equilateral triangle $T$ with vertices $p_1, p_2, p_3$ we get an increasing sequence of graphs as in Figure \ref{sierstages}. This gives us graph approximations of the Sierpinski gasket:
$$SG_n := \bigcup_{w\in\{1, 2, 3\}^n} f_w(T) \ \ \ \ \ \ \ \text{ for } n\geq 0$$ 
where $w \in\{1, 2, 3\}^n$ means that $w =w_1w_2\cdots w_n$ is a word in the letters $\{1, 2, 3\}$ with length $|w| = n\geq1$, and $f_w = f_{w_n}\circ \cdots \circ f_{w_2}\circ f_{w_1} $. If $|w| = 0$, then $w = \emptyset$ and $f_\emptyset = id$. 

Notice that one can identify $SG_n$ as a subgraph of $SG_{n+1}$ and hence $\{SG_n\}_{n\geq0}$ is an increasing sequence of graphs. We can define the \textbf{classical Sierpinski gasket} as the closure of the union of these graph approximations:
$$SG:= \overline{\bigcup_{n\geq0} SG_n}.$$
Equivalently, one can define the Sierpinski gasket as the unique non-empty compact set in $\R^2$ which satisfies the self-similarity condition
$$SG = \bigcup_{j=1}^3 f_j(SG).$$ 
Both of these view points will be useful to us in what follows.

Next  let us define the harmonic Sierpinski gasket. This space can be obtained from the classical Sierpinski gasket via a homeomorphism defined using what are known as harmonic functions on $SG$.  For more on the theory of harmonic functions on $SG$ and on analysis on fractals in general see \cite{kig} and \cite{stri}.

%

Let $V_0 = \{p_1, p_2, p_3\}$ and for $n\geq1$ define 
$$V_n = \bigcup_{w\in\{1, 2, 3\}^n} f_w(V_0).$$ 
These are the vertices in the level $n$ approximation to the Sierpinski gasket, $SG_n$. Let 
$$V^* = \bigcup_{n\geq0}V_n.$$
 
 \begin{figure}
\[ \includegraphics[scale= .4]{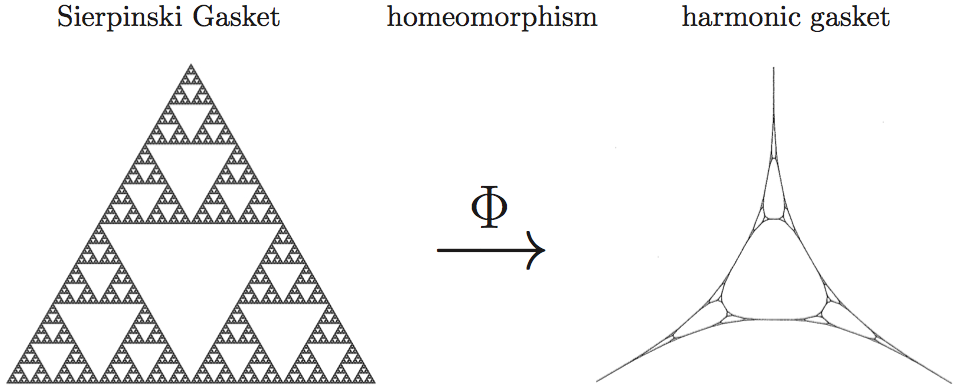}\]
\caption{\cite{lapsar}}
\label{siertohar1}
\end{figure}

\begin{definition}
Given $f, g : V_n \to \R$ define the \textbf{energy} on $SG_n$ by
$$E_n(f, g) :=  \sum_{x\sim_n y} (f(x)-f(y))(g(x)-g(y)),$$  
where $x\sim_ny$ means $x, y\in F_{w}(V_0)$ for some $w\in \{1, 2, 3\}^n$. That is, $x$ and $y$ are connected by an $n$-edge in $SG_n$. In the sum, we count each pair $x, y$ with $x\sim_ny$ exactly once.
 
We will focus on the case when $f=g$ so
$$E_n(f) := E_n(f, f) =  \sum_{x\sim_n y} (f(x)-f(y))^2.$$  
\end{definition}

Given $f: V_n \to \R$ we can extend $f$ to $V_{n+1}$ in many ways. If we extend so that $E_{n+1}(f)$ is as small as possible, the extension is called the harmonic extension of $f$ to $V_{n+1}$.

\begin{definition}
A function $f:V_n \to \R$ is \textbf{harmonic} if given its values at $V_0$ it minimizes $E_k(f)$ for each $k =1, 2, \dots, n$.
\end{definition}

A calculation shows that for a harmonic function $f:V_{n+1}\to\R$ we have
$$E_{n}(f) = \frac{5}{3} E_{n+1}(f).$$ 

\begin{definition}
Given $f: V_n \to \R$ define the \textbf{renormalized energy} on $SG_n$ by
$$\calE_0(f) := E_0(f) \ \ \ \ \text{ and } \ \ \ \ \calE_{n}(f) := \left(\frac{5}{3}\right)^{n} E_{n}(f) \text{ for } n\geq1.$$
\end{definition}

So long as $f$ is extended harmonically, the quantity $\calE_n(f)$ is constant as $n$ increases. Otherwise, $\calE_n(f)$ increases as $n$ increases. This means the limit 
$$\calE(f) := \lim_{n\to\infty}  \calE_n(f)$$
exists (but is possibly infinite) for $f:V^* \to \R$.

\begin{figure}
\[ \large
\begin{tikzcd}
SG \arrow{r}{f_j} \arrow{d}{\Phi} & SG \arrow{d}{\Phi} \\
K_H  \arrow{r}{H_j} & K_H
\end{tikzcd} \]
\caption{\cite{kig1}}
\label{comdi}
\end{figure}

The set $V^*$ is dense in $SG$ and hence a uniformly continuous function on $V^*$ can be uniquely extended to a function on all of $SG$. One can show that harmonic functions on $V^*$, and in fact functions for which the limit in $\calE(f)$ is finite, are uniformly continuous on $V^*$; see \cite{kig}, \cite{stri}. 
For each $j=1, 2, 3$, consider the function $h_j: SG \to \R$, where $h_j(p_k) = \delta_{j}(k)$ for $k=1, 2, 3$ and $h_j$ is extended harmonically to $V^*$ and by continuity to the Sierpinski gasket, $SG$. 

Define $\Phi: SG \to \R^3$ by
$$\Phi(x) = \frac{1}{\sqrt{2}} \left(\left (\begin{array}{c} h_1(x)\\ h_2(x)\\ h_3(x) \end{array}\right)-
\frac{1}{3} \left( \begin{array}{c} 1\\1\\1 \end{array} \right)\right).$$
We define the \textbf{harmonic Sierpinski gasket} by $K_H:= \Phi(SG)$. It was shown by Kigami in \cite{kig1} that $\Phi$ is a homeomorphism between $SG$ and $K_H$ when endowing these spaces with the topology induced by the restriction of the Euclidean metric. 

We can also define $K_H$ in terms of contraction maps, as was done for the classical Sierpinski gasket. 
Let $Z = \{(x,y,z) \in\R^3: x+y+z = 0\}$ and let 
$$P = \frac{1}{3}\left( \begin{array}{ccc}
2 & -1 & -1 \\
-1 & 2 & -1 \\
-1 & -1 & 2 \end{array} \right)$$ 
be the orthogonal projection of $\R^3$ onto $Z$. Let $q_j = \frac{3P(b_j)}{\sqrt{6}}$ for $j =1, 2, 3 $ where $\{b_1, b_2, b_3\}$ is the standard basis for $\R^3$. Choose $q'_j \in \R^3$ such that $\{q_j, q'_j\}$ is an orthonormal basis for $Z$. For $j = 1, 2, 3,$ define $M_j:Z \to Z$ by 
$$M_j(q_j)=\frac{3}{5}q_j \ \ \text{ and } \ \ M_j(q'_j) = \frac{1}{5} q'_j$$ 
and let $H_j: Z \to Z$ be given by 
\begin{equation}\label{hmaps}
H_j(x) = M_j(x-q_j)+q_j.
\end{equation} 
The maps $H_j$ are contractive affine maps and $K_H$ is the unique non-empty compact set such that 
$$K_H = \bigcup_{n=1}^3 H_j(K_H).$$

The two equivalent ways of defining the harmonic Sierpinski gasket are connected via the relation 
$\Phi\circ f_j = H_j \circ \Phi$ (for $j=1, 2, 3$) or the commutative square in Figure \ref{comdi}; see \cite{kig1}.

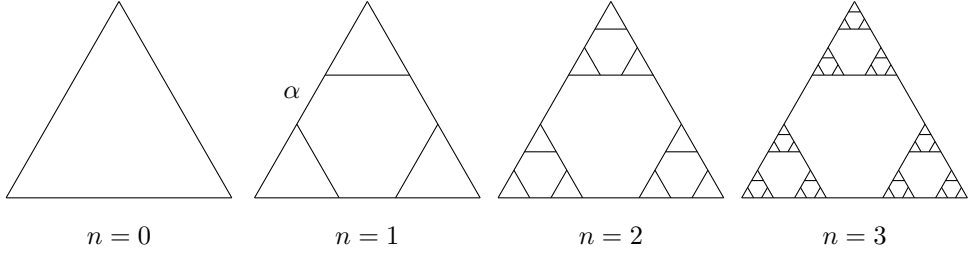
\begin{figure} 
\begin{tikzpicture}
\node(a) at (1.5, -.5) {$n=0$};
\draw[ - ] (0,0) to (3/2, 2.598); 
\draw[ - ] (3/2, 2.598) to (3,0); 
\draw[ - ] (0, 0) to (3, 0); 
\hspace{1.3in}
\node(d) at (.5, 1.4) {$\alpha$};
\node(a) at (1.5, -.5) {$n=1$};

\draw[ - ] (0,0) to (3/2, 2.598); 
\draw[ - ] (3/2, 2.598) to (3,0); 
\draw[ - ] (0, 0) to (3, 0); 
\draw[ - ] (.5625, .975) to (1.125, 0); 
\draw[ - ] (2.4375, .975) to (1.875,0); 
\draw[ - ] (.9375, 1.6234) to (2.0625, 1.6234); 
\hspace{-.3 in}
\node(a) at (5.5, -.5) {$n=2$};
\draw[ - ] (4,0) to (3/2+4, 2.598); 
\draw[ - ] (3/2+4, 2.598) to (7,0); 
\draw[ - ] (4, 0) to (7, 0); 
\draw[ - ] (4.5625, .975) to (5.125, 0); 
\draw[ - ] (6.4375, .975) to (5.875,0); 
\draw[ - ] (4.9375, 1.6234) to (6.0625, 1.6234); 
\draw[ - ] (4.211, .3655) to (4.421875, 0); 
\draw[ - ] (4.35155, .609) to (4.7735, .609); 
\draw[ - ] (4.703, 0) to (4.914, .3655); 

\draw[ - ] (5.1485, 1.9895) to (5.3595, 1.6238); 
\draw[ - ] (5.289, 2.2325) to (5.7109, 2.2325); 
\draw[ - ] (5.641, 1.6238) to (5.8515, 1.9895); 

\draw[ - ] (6.0859, .3655) to (6.2969, 0); 
\draw[ - ] (6.2265, .6089) to (6.6485, .6089); 
\draw[ - ] (6.789, .3655) to (6.578, 0); 

\hspace{2.85in}
\node(a) at (1.5, -.5) {$n=3$};

\draw[ - ] (0,0) to (3/2, 2.598); 
\draw[ - ] (3/2, 2.598) to (3,0); 
\draw[ - ] (0, 0) to (3, 0); 
\draw[ - ] (0.5625, .975) to (1.125, 0); 
\draw[ - ] (2.4375, .975) to (1.875,0); 
\draw[ - ] (0.9375, 1.6234) to (2.0625, 1.6234); 
\draw[ - ] (0.211, .3655) to (0.421875, 0); 
\draw[ - ] (0.35155, .609) to (0.7735, .609); 
\draw[ - ] (0.703, 0) to (0.914, .3655); 

\draw[ - ] (1.1485, 1.9895) to (1.3595, 1.6238); 
\draw[ - ] (1.289, 2.2325) to (1.7109, 2.2325); 
\draw[ - ] (1.641, 1.6238) to (1.8515, 1.9895); 

\draw[ - ] (2.0859, .3655) to (2.2969, 0); 
\draw[ - ] (2.2265, .6089) to (2.6485, .6089); 
\draw[ - ] (2.789, .3655) to (2.578, 0); 
\draw[ - ] (0.0791, .1370) to (0.158, 0); 
\draw[ - ] (0.1318, .2283) to (0.290, .2283); 
\draw[ - ] (0.264, 0) to (0.343, .137); 

\draw[ - ] (1.368, 2.37) to (1.446,2.232); 
\draw[ - ] (1.421, 2.46) to (1.579,2.46); 
\draw[ - ] (1.553, 2.233) to (1.632, 2.37); 

\draw[ - ] (2.657, 0.137) to (2.738,0); 
\draw[ - ] (2.710, .228) to (2.868,.228); 
\draw[ - ] (2.921, .137) to (2.842, 0); 

\draw[ - ] (1.954, 0.137) to (2.033,0); 
\draw[ - ] (2.007, .228) to (2.165,.228); 
\draw[ - ] (2.139, 0) to (2.212, .137); 

\draw[ - ] (.431, 0.746) to (.509,0.609); 
\draw[ - ] (.4834, .837) to (.642,.837); 
\draw[ - ] (.615, 0.609) to (.6945, .746); 

\draw[ - ] (.782, .137) to (.863, 0); 
\draw[ - ] (.835, .228) to (.993, .228); 
\draw[ - ] (1.046, .137) to (.9668, 0); 

\draw[ - ] (1.017, 1.761) to (1.096, 1.624); 
\draw[ - ] (1.07, 1.852) to (1.223, 1.852); 
\draw[ - ] (1.201, 1.624) to (1.28, 1.761); 

\draw[ - ] (2.306, .746) to (2.385, .609); 
\draw[ - ] (2.358, .837) to (2.517, .837); 
\draw[ - ] (2.490, .609) to (2.570, .746); 

\draw[ - ] (1.72, 1.761) to (1.8, 1.624); 
\draw[ - ] (1.773, 1.852) to (1.931, 1.852); 
\draw[ - ] (1.983, 1.761) to (1.904, 1.624); 
\end{tikzpicture} 
\label{ssgstages}
\caption{Graph approximations of the stretched Sierpinski gasket.}
\end{figure}

Next we define the stretched Sierpinski gasket. 
Fix $\al \in (0, \frac{1}{3})$ and let $p_1, p_2, \dots, p_6 \in \R^2$ be given by
\[
\begin{tabular}{ l  c  r}
$p_1 = (0,0)$, & $p_2 = \left(\frac{1}{2}, \frac{\sqrt{3}}{2}\right)$, & $p_3 = (1, 0)$,\\

$p_4 = \frac{p_2+p_3}{2},$ & $p_5 = \frac{p_1+p_3}{2}$, & $p_6 = \frac{p_1+p_2}{2}$.\\
\end{tabular}
\]
Let $A_1, A_2, \dots, A_6$ be $2\times 2$ matrices given by 
$$A_1=A_2= A_3 = \frac{1-\al}{2}\left( \begin{array}{cc}  
 1& 0  \\
0 & 1 \end{array} \right), $$
$$ A_4 = \frac{\al}{4}\left( \begin{array}{cc} 
 1& -\sqrt{3} \\
-\sqrt{3} & 3 \end{array} \right),\ \
A_5 =\al \left( \begin{array}{cc} 
 1& 0  \\
0 & 0 \end{array} \right),\ \  
A_6= \frac{\al}{4}\left( \begin{array}{cc} 
 1& \sqrt{3}  \\
\sqrt{3} & 3 \end{array} \right). $$

Define the maps $F_{\al, j} :\R^2 \to\R^2$ by 
\begin{equation}\label{defssg}
F_{\al, j}(x):= A_j(x-p_j)+p_j \text{ for } j=1, 2, \dots, 6.
\end{equation} 
The maps $F_{\al,1}, F_{\al, 2}, F_{\al, 3}$ will map the triangle $T$ to smaller triangles at each of the three corners of $T$. Note that $F_{\al,1}, F_{\al, 2}, F_{\al, 3}$ are contraction similarities, meaning that they are maps which shrink the space by the same ration, namely $\frac{1-\al}{2}$, in every directing. On the other hand, the maps $F_{\al, 4}, F_{\al, 5}, F_{\al, 6}$ will map $T$ to line segments of length $\al$ and these are contractive affine maps, meaning that they shrink the space but may do so by different ratios in different directions. As with the classical Sierpinski gasket, we can define $K_\al$ as the closure of the increasing union of the graphs seen in Figure \ref{ssgstages} or as the unique set satisfying some condition involving the maps $\{F_{\al, j}\}$. We define the \textbf{stretched Sierpinski gasket} as the unique non-empty compact set $K_\al \subseteq \R^2$ satisfying the self-affinity condition
$$K_\al = \bigcup_{j=1}^6 F_{\al, j}(K_\al).$$


\begin{figure}
\centering
\begin{tikzpicture}
\node(a) at (5/8, 2) {$ e^3$};
\node(a) at (31/8, 2) {$e^1$};
\node(a) at (9/4, -.5) {$e^2$};

\node(a) at (-0.3, 0) {$p_1$};
\node(a) at (9/4, 4.17) {$p_2$};
\node(a) at (4.87, 0) {$p_3$};
\node(a) at  (1.5, 1.9) {$p_6$};
\node(a) at  (3.1, 1.9) {$p_4$};
\node(a) at  (9/4, 0.3) {$p_5$};
\node[circle,fill=black,inner sep=0pt,minimum size=3pt] (a) at (0,0) {};
\node[circle,fill=black,inner sep=0pt,minimum size=3pt] (a) at (9/4,3.88) {};
\node[circle,fill=black,inner sep=0pt,minimum size=3pt] (a) at (9/2,0) {};
\node[circle,fill=black,inner sep=0pt,minimum size=3pt] (a) at (9/8,1.92) {};
\node[circle,fill=black,inner sep=0pt,minimum size=3pt] (a) at (27/8,1.92) {};
\node[circle,fill=black,inner sep=0pt,minimum size=3pt] (a) at (9/4,0) {};

\draw[ - ] (0,0) to (9/4, 3.88); 
\draw[ - ] (9/4, 3.88) to (9/2,0); 
\draw[ - ] (0, 0) to (9/2, 0); 
\draw[ - ] (.843, 1.46) to (1.68, 0); 
\draw[ - ] (3.64, 1.46) to (2.81,0); 
\draw[ - ] (1.405, 2.43) to (3.09, 2.43); 

\draw[ (-) ] (.843, 1.46) to (1.405, 2.43);
\draw[(-)] (3.64, 1.46) to (3.09, 2.43); 
\draw[ (-) ] (1.68, 0) to (2.81,0); 
\end{tikzpicture}
\caption{The triangle $T$ with fixed points $p_j$ of the maps $F_j$ and edges $e^1, e^2, e^3$.}
\label{ptsandedges}
\end{figure}
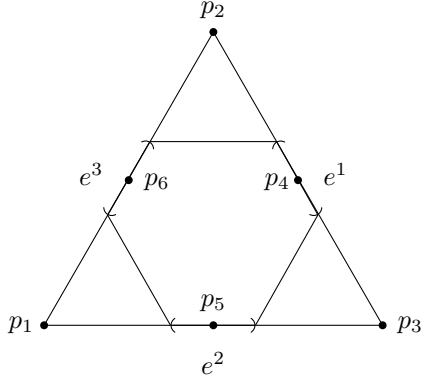

We will fix the parameter $\al\in (0, \frac{1}{3})$ and hence will write $F_{\al, j} = F_j$. If $\al = 0$, then $K_\al = SG$, and if $\al = 1/3$, then the geometry of the space reduces to the 1-dimensional case. 

Notice that $K_\al$ can be written in terms of a ``discrete" part and a ``continuous" part. Let $W_\al$ be the unique compact set satisfying 
$$W_\al := \bigcup_{j=1}^3 F_j(W_\al).$$
Let $J_0 = \emptyset$ and for $n\geq1$ let
$$J_{\al, n} = J_n := \bigcup_{m=0}^{n-1} \ \ \bigcup_{w\in\{1, 2, 3\}^m} F_w\left( \bigcup_{j=1}^3 e^j\right)$$
where $e^j = \text{int}(F_{j+3}(T))$ for $j =1, 2, 3$. Note that $e^1, e^2, e^3$ are the three edges in the first graph approximation of $K_\al$ which join the three triangles in $K_\al$ together. We will call the edges in $J_n$, the level $n$ joining edges.  Also make note of the fact that we take the level $n$ joining edges to be open. Letting $J^* = \cup_{n\geq1} J_n$ we see that
$$K_\al = \bigcup_{j=1}^6 F_j(K_\al) = W_\al \ \dot \cup \ J^*$$
where the second union is disjoint; see \cite{afk}. The set $W_\al$ is the \emph{discrete} part of $K_\al$ and has many properties similar to the classical Sierpinski gasket; the set $J^*$ is the \emph{continuous} part of $K_\al$ and is a union of shrinking intervals. This decomposition of $K_\al$ will be essential in proving results concerning the Hausdorff dimension and measure of $K_\al$. 

\section{Spectral triples}
We will now introduce the toolkit from noncommutative geometry that we use to study fractal sets like the stretched Sierpinski gasket and the harmonic Sierpinski gasket. We use the notation $[A, B]:= AB - BA$ for the commutator of two operators $A, B$ on a Hilbert space. Also, given a Hilbert space $\Hil$, we write $\bh$ for the space of bounded operators on $\Hil$. 

\begin{definition}
A \textbf{spectral triple} $(\A, \Hil, D)$ is a collection of three objects
\begin{itemize}
\item $\A$ a unital $C^\ast$-algebra, 
\item $\Hil$ a Hilbert space which carries a unital faithful representation $\pi:\A \to \bh$, and 
\item an unbounded, essentially self-adjoint, operator $D$ with domain, $\text{Dom}(D) \subseteq \Hil$, such that
	\begin{enumerate}[(a)]
	\item \label{cond1onD} the set 
	$$\{a\in \A : [D, \pi(a)] \text{ is densely defined and has a bounded extension to }\Hil\},$$
	is dense in $\A$ and 
	\item \label{cond2onD} the operator $(I+D^2)^{-1}$ is compact. 
	\end{enumerate}
\end{itemize}
\end{definition}

The $C^\ast$-algebra $\A$ will often be $C(X)$, where $X$ is a compact Hausdorff space. The operator $[D, \pi(a)]$, for $a\in \A$, will act like the ``derivative" of the element $a$ and the dense set in condition (\ref{cond1onD}) will act like the set of $C^1$ functions in $C(X)$.

\subsection{Induced notions of dimension, distance, and measure} 
Using the three tools in a spectral triple, one can define notions of dimension, metric, and measure on a compact Hausdorff space $X$. 

\begin{definition}
Given a spectral triple $(C(X), \Hil, D)$, the number 
$$\ds = \ds(X):= \inf\{p>0: \tr((I+D^2)^{-p/2})<\infty\}$$ 
is the \textbf{spectral dimension} (or \textbf{metric dimension}) of the space $X$.
\end{definition}

Note that condition $(\ref{cond2onD})$ in the definition of a spectral triple is needed so that the trace in the definition of spectral dimension has a possibility of being finite. A priori there is no reason why the spectral dimension $\ds$ should be finite. 

We next define a notion of distance induced by a spectral triple. The definition will look familiar to those who know of the metrics on state spaces. For more on this, see the works of Marc Rieffel in \cite{rif}, \cite{rif2}, \cite{rif3} and of Alain Connes in \cite{co2}.

\begin{definition}
Given a spectral triple $(C(X), \Hil, D)$, define the \textbf{spectral distance} by \\
$$d_X(x, y) = \sup\{|f(x)-f(y)|: f\in C(X), \|[D, \pi(f)]\|\leq 1\},$$
for $x, y\in X$.
\end{definition}
 \bigskip
 
Using a spectral triple and another notion from noncommutative geometry we can define a notion of measure. For this we must introduce the concept of the Dixmier trace.
 
The space in the definition that follows is an ideal in the set of compact operators and will serve as the domain of the Dixmier trace. 
For a compact operator $T$, denote by $\mu_j(T)$ the eigenvalues of $|T|$ ordered so that $0\leq \mu_{j+1}(T) \leq \mu_{j}(T) \text{ for } j \in \N$. 

\begin{definition}
Let $\Hil$ be a separable Hilbert space. Define 
$$\dixdom = \left\{T\in \com : \|T\|_{(1, \infty)}:=\sup_N \frac{1}{\log{(1+N)}} \sum_{j=1}^N \mu_j(T)<\infty\right\},$$
 where $\com$ is the set of compact operators on $\Hil$.
\end{definition}

Let $w:\ell^\infty \to \C$ be a linear functional which vanishes on $c_0$ and satisfies, $w(x_1, x_1, x_2, x_2, \dots) = w(x_1, x_2, \dots)$ for $(x_n) \in \ell^\infty$. This dilation invariance is a technical requirement to ensure that the Dixmier trace is linear on positive operators in $\dixdom$. The existence of such linear functionals is given by the group action invariant Hahn-Banach theorem stated in Theorem 3.3.1 in \cite{ed}. 

\begin{definition}
The \textbf{Dixmier trace} of $T\in\dixdom$, where $T\geq0$, is given by
$$\text{Tr}_w(T) = w\left\{\frac{1}{\log{(1+N)}} \sum_1^N \mu_n(T)\right\}.$$ 
Since any self-adjoint operator is the difference of two positive operators and any bounded operator is the linear combination of self-adjoint operators, we can define $\text{Tr}_w(\cdot)$ for an arbitrary compact operator in $\dixdom$ by linearity.
\end{definition}

Note the dependence of $\trw$ on the choice of $w$. The results that follow will show cases in which $\trw$ is independent of the choice of $w$, but this is not true in general. See \cite{lord} for more on the theory of singular traces such as the Dixmier trace. 

The following theorem of Alain Connes in \cite{co} is often used to compute the Dixmier trace as the residue of a certain series. We will make use of this theorem in the sections that follow.

\begin{theorem}\label{co}\ \\
For $T\geq 0$, $T \in \dixdom$, the following two conditions are equivalent:
\begin{enumerate}
\item $(s-1)\sum_{n=0}^\infty \mu_n(T)^s \to L$ as $s\to 1^+$;
\item $\frac{1}{\log N} \sum_{n=0}^{N-1} \mu_n(T) \to L$ as $N\to \infty$.
\end{enumerate}
\end{theorem}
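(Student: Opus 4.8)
Write $\mu_n = \mu_n(T)$ for the non-increasing sequence of singular values, $S_N = \sum_{n=0}^{N-1}\mu_n$ for its partial sums, and $Z(s) = \sum_{n=0}^\infty \mu_n^s$ for the associated zeta-type series; since $T \in \dixdom$ one has $S_N = O(\log N)$, so $Z(s)$ converges for every $s>1$. The statement is a two-sided Tauberian theorem relating the residue of $Z$ at $s=1$ to the logarithmic Ces\`aro mean of the $\mu_n$, and the plan is to reduce both implications to the Abelian and Tauberian halves of Karamata's theorem for Laplace--Stieltjes transforms.

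First I would pass to the counting function $N(\lambda) = \#\{n : \mu_n > \lambda\}$. Summing the elementary identity $\mu_n^s = s\int_0^{\mu_n}\lambda^{s-1}\,d\lambda$ over $n$ gives $Z(s) = s\int_0^\infty \lambda^{s-1}N(\lambda)\,d\lambda$, and after the substitution $\lambda = e^{-u}$ together with the rescaling $g(u) := e^{-u}N(e^{-u})$ this becomes $Z(s) = s\,\Psi(s-1)$, where $\Psi(\sigma) = \int_{u_0}^\infty e^{-\sigma u}g(u)\,du$ is the Laplace transform of the nonnegative function $g$ (here $u_0 = -\log\mu_0$). Thus condition (1), that $(s-1)Z(s)\to L$ as $s\to 1^+$, is exactly the assertion $\sigma\,\Psi(\sigma)\to L$, i.e. $\Psi(\sigma)\sim L/\sigma$, as $\sigma\to 0^+$.

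Next I would record two dictionary translations, each resting only on the monotonicity of $(\mu_n)$. By Fubini one has the exact identity $\int_\lambda^{\mu_0}N(t)\,dt = \sum_{n:\,\mu_n>\lambda}(\mu_n - \lambda)$; taking $\lambda \approx \mu_N$ and using $N\mu_N \le S_N = O(\log N)$ shows that $\int_\lambda^{\mu_0}N(t)\,dt \sim L\log(1/\lambda)$ is equivalent to condition (2), $S_N\sim L\log N$. Undoing $\lambda = e^{-u}$ identifies $\int_{e^{-U}}^{\mu_0}N(t)\,dt$ with $G(U) := \int_{u_0}^U g(u)\,du$, so the same integrated statement reads $G(U)\sim LU$ as $U\to\infty$. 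Karamata's Tauberian theorem, applied with index $\rho = 1$ so that $\Gamma(\rho+1)=1$, asserts precisely that $\Psi(\sigma)\sim L/\sigma \iff G(U)\sim LU$; its Abelian half gives (2)$\To$(1) and its Tauberian half gives (1)$\To$(2). As a sanity check the model $\mu_n = L/n$ yields $Z(s) = L^s\zeta(s)\sim L/(s-1)$ and $S_N\sim L\log N$ simultaneously.

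The Abelian direction and the two monotonicity-based translations are routine bookkeeping; the essential content, and the main obstacle, is the Tauberian half of Karamata's theorem. There one controls only the smeared transform $\Psi(\sigma)$ yet must recover the sharply truncated integral $G(U)$, a passage that fails without a side condition and is licensed here solely by the positivity of $g$ (equivalently, the monotonicity of the $\mu_n$). Concretely the difficulty is Karamata's approximation step: one approximates the discontinuous cutoff $\mathbf{1}_{[0,1]}$ uniformly by polynomials, applies the resulting identity to $\Psi$, and dominates the error uniformly in $\sigma$ using the Tauberian monotonicity. This is exactly where the hypothesis $T \ge 0$, ensuring that the $\mu_n$ form a genuine non-increasing sequence, becomes indispensable.
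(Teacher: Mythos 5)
The paper itself offers no proof of this statement: it is quoted as a theorem of Connes and attributed to \cite{co}, so there is no internal argument to compare yours against. Your proposal follows what is essentially the classical route behind the cited result: encode the zeta-type series as a Laplace transform of the eigenvalue counting function via $\mu_n^s = s\int_0^{\mu_n}\lambda^{s-1}\,d\lambda$ and the substitution $\lambda = e^{-u}$, encode the logarithmic Ces\`aro means as the integrated counting function, and let Karamata's Abelian--Tauberian theorem (index $\rho=1$, with monotonicity of $G$ as the Tauberian condition) carry the load. The reductions are sound: $(s-1)Z(s)\to L$ is exactly $\sigma\Psi(\sigma)\to L$, the hypothesis $T\in\dixdom$ gives $\Psi(\sigma)<\infty$ for $\sigma>0$, and $G$ is non-decreasing, so Karamata applies in both directions. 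As an outline this is a correct proof, and it is the standard one.

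The only place where your stated justification would fail if taken literally is the ``dictionary'' between $G(U)\sim LU$ and $S_N\sim L\log N$. The exact identities are $\sum_{\mu_n>\lambda}(\mu_n-\lambda) = S_{N(\lambda)}-\lambda N(\lambda)$ and, at $\lambda=\mu_N$, $G(\log(1/\mu_N)) = S_N - N\mu_N$; but the bound you invoke, $N\mu_N\le S_N = O(\log N)$, makes the correction term of the \emph{same} order as the main term $L\log N$, so it does not disappear in the comparison. The repair is standard but needs to be said. For (2) implying $G(U)\sim LU$: get the lower bound from $G(\log(1/\lambda))\ge S_M-\lambda M$ with $M=\lfloor 1/\lambda\rfloor$, and the upper bound from $G\le S_{N(\lambda)}$ together with $\log N(\lambda)\le (1+o(1))\log(1/\lambda)$, which follows from $\mu_n=O(\log n/n)$, i.e.\ from the $\dixdom$ hypothesis. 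For the converse: difference the hypothesis at scales $\lambda$ and $2\lambda$, using $G(\log(1/\lambda))-G(\log(1/2\lambda))\ge \lambda N(2\lambda)$, to conclude $\lambda N(\lambda)\le 2L\log 2 + o(\log(1/\lambda))$; then the splitting $S_N\le G(\log(1/\lambda))+\lambda N(\lambda)+N\lambda$ with $\lambda=1/N$ gives $\limsup S_N/\log N\le L$, while $S_N\ge G(\log(1/\mu_N))$ with $\log(1/\mu_N)\ge(1-o(1))\log N$ gives the matching lower bound. With this refinement (and the trivial degenerate case of finitely many nonzero $\mu_n$, where both limits are $0$), your argument is complete.
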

\medskip

A result of Connes is that for a suitable choice of spectral triple, the map $\trw(\pi(f) |D|^{-\ds})$ is a non-trivial positive linear functional on $C(X)$ and hence induces a measure; see \cite{co}. This is how we will use a spectral triple to induce a measure on a fractal set.

\subsection{Spectral triple for a curve}

One can now associate to a spectral triple $(C(X), \Hil, D)$ a notion of dimension, metric, and measure. The construction of a spectral triple that we use for the classical Sierpinski gasket, the stretched Sierpinski gasket, and the harmonic Sierpinski gasket is based on the construction of a spectral triple for a continuous curve. The following construction of a spectral triple for a curve was first examined by Christensen, Ivan, and Lapidus in \cite{cil} and later used by Lapidus and Sarhad in \cite{lapsar}. 
We define a spectral triple for a curve as follows.

\begin{definition}
Let $X$ be a compact Hausdorff space, $\ell >0$, and $R:[0, \ell] \to X$ a continuous injective map. Then a spectral triple for the $R$-curve is
\begin{itemize}
\item $C(X)$;
\item $\Hil_\ell:= L^2\left([-\ell, \ell] , (2\ell)^{-1}m\right)$ where $(2\ell)^{-1}m$ is the normalized Lebesgue measure and the representation is given by $\pi_\ell : C(X)\to \mathscr{B}(\Hil_\ell)$, 
$$\pi_\ell(f)h(x) := f(R(|x|))h(x);$$
\item $D_{\ell} := D + \frac{\pi}{2\ell} I$ where $D$ is the closure of the operator $-i\frac{d}{dx}$ restricted to the linear span of the set $\{\phi^\ell_k = e^{i\pi kx/\ell}: k\in\Z\}$. That is, $D = \overline{-i\frac{d}{dx}|_{span(\phi^\ell_k)}}$.
\end{itemize}
\end{definition}

Note that $\{\phi_k^\ell(x) = e^{i\pi kx/\ell}\}_{k\in \Z} $ is an orthonormal basis for $\Hil_\ell$ and that these are eigenfunctions of the operator $-i\frac{d}{dx}$, with eigenvalues $\{\frac{\pi k}{\ell}: k\in\Z\}$. We consider functions in $\Hil_\ell$ as restrictions of $2\ell$-periodic functions on $\R$ and hence the operator $D_\ell$ has periodic boundary conditions. The translation in the definition of the operator $D_\ell$ is needed in order to ensure that 0 is not an eigenvalue of the operator. This allows us to talk about the eigenvalues of the operator $|D_\ell|^{-1}$. 

The eigenvalues of $D_\ell$ are 
$$\sigma(D_\ell) = \left\{\frac{(2k+1)\pi}{2\ell}: k\in \Z\right\}$$
and the operator $D_\ell$ can be defined for $f \in L^2[-\ell, \ell]$ by 
$$ D_\ell f = \sum_{k\in\Z} \frac{(2k+1)\pi}{2\ell} \lan f, \phi^\ell_k\ran \phi^\ell_k,$$
where we say that $f$ is in the domain of $D_\ell$, written Dom$(D_\ell)$, if 
$$\|D_\ell f \|_2^2 = \sum_{k\in\Z} \left|\frac{(2k+1)\pi}{2\ell} \right|^2 |\lan f, \phi^\ell_k\ran|^2 < \infty.$$

We think of functions in $C(X)$ as functions in $L^2([-\ell, \ell])$ by working with $f(R(|x|))\in L^2([-\ell, \ell])$ rather than $f\in C(X)$. In particular, note that we care about the a.e. \emph{equivalence class} of the function $f(R(|x|))$ in $L^2([-\ell, \ell])$. 
It is also important to note that for functions $f(R(|x|))\in C^1([-\ell, \ell])$ and $g \in C^1([-\ell, \ell])\subseteq  L^2([-\ell, \ell])$ we have
$$[D_\ell, \pi_\ell(f)] g = \pi_{\ell}\left(-i \frac{df}{dx}\right) g = \pi_{\ell}(Df) g.$$
This shows that the operator $[D_\ell, \pi_\ell(f)]$ is densely defined 
and extends to the bounded operator $\pi_\ell(D f)$ on $L^2([-\ell, \ell])$. 
Proposition 4.1 in \cite{cil} shows that the set in condition (\ref{cond1onD}) of the definition of a spectral triple, is dense in $C(X)$.  It follows that the above is indeed a spectral triple for the $R$-curve. 

The following lemma was stated in \cite{cil}. 

\begin{lemma}\label{lemfromcil}
Let $f:[ -\ell, \ell] \to \C$ be a continuous function. Then the following are equivalent:
\begin{enumerate}
\item $[D_\ell, \pi_\ell(f)]$ is densely defined and bounded.
\item $f \in \text{Dom}(D)$ and $Df$ is essentially bounded.
\item There exists a measurable, essentially bounded function $g: [-\ell, \ell]\to \C$ such that
$$\int_{-\ell}^\ell g(t)\ dt = 0 \text{ and for all } x\in [-\ell, \ell]: \ \ \ f(x) = f(0)+ \int_0^x g(t) \ dt.$$
\end{enumerate}
If the conditions above are satisfied then $g(x) = (i Df)(x)$ almost everywhere.
\end{lemma}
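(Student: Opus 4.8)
The plan is to establish the cyclic chain $(1)\Rightarrow(3)\Rightarrow(2)\Rightarrow(1)$. Since $D_\ell = D + \frac{\pi}{2\ell}I$ differs from $D$ only by a scalar, $[D_\ell,\pi_\ell(f)] = [D,\pi_\ell(f)]$, so I may phrase everything in terms of $D=\overline{-i\frac{d}{dx}}$; I write $M_f := \pi_\ell(f)$ for the multiplication operator on $\Hil_\ell$. Recall that the trigonometric polynomials $\mathrm{span}\{\phi_k : k\in\Z\}$, with $\phi_k(x)=e^{i\pi kx/\ell}$, form a core for $D$, and that the Fourier description of $\text{Dom}(D)$ given above identifies it with the periodic Sobolev space $H^1_{\mathrm{per}}([-\ell,\ell]) = \{f \text{ absolutely continuous}: f'\in L^2,\ f(-\ell)=f(\ell)\}$. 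The two implications $(3)\Rightarrow(2)$ and $(2)\Rightarrow(1)$ are routine; the content lies in $(1)\Rightarrow(3)$.

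For $(3)\Rightarrow(2)$, given $g\in L^\infty$ with $\int_{-\ell}^\ell g = 0$ and $f(x)=f(0)+\int_0^x g$, the function $f$ is Lipschitz, hence absolutely continuous with $f'=g$ a.e.\ and $f'\in L^2$, while $\int_{-\ell}^\ell g=0$ is exactly the periodicity $f(-\ell)=f(\ell)$; thus $f\in H^1_{\mathrm{per}} = \text{Dom}(D)$. Integrating by parts against $\phi_k$ (the boundary terms vanishing by periodicity) yields $\widehat g(k)=\frac{i\pi k}{\ell}\widehat f(k)$, from which $Df = \sum_k \frac{\pi k}{\ell}\widehat f(k)\phi_k = -ig$ is essentially bounded; this is also the final identity $g = iDf$. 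For $(2)\Rightarrow(1)$, if $f\in\text{Dom}(D)$ and $Df\in L^\infty$ then $f$ is Lipschitz (as $f'=iDf$) and periodic, so for every trigonometric polynomial $h$ the product $fh$ again lies in $\text{Dom}(D)$, and the Leibniz rule gives $[D,M_f]h = (Df)h = M_{Df}h$, bounded by $\|Df\|_\infty$. As the $h$ range over a core, $[D,M_f]$ is densely defined with bounded extension $M_{Df}$.

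The crux is $(1)\Rightarrow(3)$, where the main obstacle is that multiplication by a merely continuous $f$ need not preserve $\text{Dom}(D)=H^1_{\mathrm{per}}$, so one cannot evaluate $[D,M_f]$ on a core and simply read off $f'$. I would bypass this by mollifying through the translation group $U_t := e^{itD}$, which acts by $(U_t h)(x)=h(x+t)$ and satisfies $U_tM_fU_{-t}=M_{\tau_t f}$ with $\tau_t f(x)=f(x+t)$. Choose a smooth $2\ell$-periodic approximate identity $\rho_\varepsilon$ and set $f_\varepsilon := \rho_\varepsilon * f$ (periodic convolution), so that $M_{f_\varepsilon} = \int \rho_\varepsilon(s)\,U_{-s}M_fU_s\,ds$. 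Since each $U_s$ commutes with $D$ and $T:=[D,M_f]$ has bounded extension with $\|T\|=C$, the bounded extension of $[D,M_{f_\varepsilon}]$ equals $\int\rho_\varepsilon(s)\,U_{-s}TU_s\,ds$, whence $\|[D,M_{f_\varepsilon}]\|\le C$. But $f_\varepsilon$ is smooth, so $[D,M_{f_\varepsilon}]=M_{-if_\varepsilon'}$ honestly, giving the uniform bound $\|f_\varepsilon'\|_\infty\le C$. Thus the $f_\varepsilon$ are $2\ell$-periodic and uniformly $C$-Lipschitz; by Arzel\`a--Ascoli a subsequence converges uniformly to a $C$-Lipschitz periodic function that agrees a.e., hence (by continuity of $f$) everywhere, with $f$. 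Therefore $f$ is $C$-Lipschitz and periodic; taking $g:=f'\in L^\infty$ gives $f(x)=f(0)+\int_0^x g$ and $\int_{-\ell}^\ell g = f(\ell)-f(-\ell)=0$, which is $(3)$.

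I expect the mollification step to be where genuine care is required: one must justify the Bochner-integral identities $M_{f_\varepsilon}=\int\rho_\varepsilon(s)U_{-s}M_fU_s\,ds$ and $[D,M_{f_\varepsilon}]=\int\rho_\varepsilon(s)U_{-s}TU_s\,ds$, using that the $U_s$ are unitaries in the functional calculus of $D$ (so they preserve $\text{Dom}(D)$ and commute with $D$) and that $T$ is bounded. This device is exactly what replaces the naive ``compute on a core'' argument, which fails precisely because $M_f$ does not preserve the domain of $D$ when $f$ is only continuous.
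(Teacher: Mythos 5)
Your implications $(3)\Rightarrow(2)$ and $(2)\Rightarrow(1)$ are fine, but the crux $(1)\Rightarrow(3)$ has a genuine gap, and it sits exactly at the step you flagged. (For calibration: the paper offers no proof of this lemma at all --- it is quoted from \cite{cil} --- so the attempt can only be measured against the intended reading of the statement and the way the paper later uses it.) The unproved claim is the identity between the bounded extension of $[D,M_{f_\varepsilon}]$ and $\int\rho_\varepsilon(s)\,U_{-s}\overline{T}U_s\,ds$, where $\overline{T}$ is the bounded extension of $T=[D,M_f]$. The facts you invoke (the $U_s$ are unitaries commuting with $D$, and $\overline{T}$ is bounded) do not yield it: $T$ is known only on its natural domain $\mathcal{D}_f=\{h\in\mathrm{Dom}(D):fh\in\mathrm{Dom}(D)\}$, and $\overline{T}$ is its extension by $L^2$-continuity. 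To verify your identity one must evaluate both sides on some fixed dense set, say trigonometric polynomials $h$, and this requires the form identity $\langle\overline{T}u,v\rangle=\langle fu,Dv\rangle-\langle Du,\bar{f}v\rangle$ for trigonometric polynomials $u,v$. But trigonometric polynomials need not lie in $\mathcal{D}_f$ (the constant $1\in\mathcal{D}_f$ already forces $f\in\mathrm{Dom}(D)$, which is what is being proved), $\mathcal{D}_f$ need not be $U_s$-invariant, and approximating $u$ by $u_n\in\mathcal{D}_f$ in $L^2$ alone does not let you pass to the limit in $\langle Du_n,\bar{f}v\rangle$: you would need $\mathcal{D}_f$ to be a \emph{core} for $D$, which density in $\Hil_\ell$ does not provide.

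Moreover the gap cannot be repaired under the literal reading of (1), because the step is then actually false. Let $f$ be an (even, periodized) Cantor--Lebesgue function, whose distributional derivative is a nonzero measure $\mu$ singular with respect to Lebesgue measure. If $h\in\mathrm{Dom}(D)$ and $fh\in\mathrm{Dom}(D)$, then the distribution $(fh)'-fh'=h\,d\mu$ is simultaneously an $L^2$ function and a measure singular to Lebesgue measure, hence zero; so $\mathcal{D}_f$ is exactly the set of $h\in H^1_{\mathrm{per}}$ vanishing on the support of $\mu$, which is dense in $L^2$ (that support is Lebesgue-null), and $Th=0$ for every $h\in\mathcal{D}_f$. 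Thus condition (1) holds literally, with $\overline{T}=0$, while $f$ is not absolutely continuous, so (3) fails; and your Bochner identity fails concretely, since $[D,M_{f_\varepsilon}]=M_{-if_\varepsilon'}\neq0$ whereas $\int\rho_\varepsilon(s)U_{-s}\overline{T}U_s\,ds=0$. The lemma is therefore only correct when ``densely defined'' is understood so that the domain of the commutator contains a core of $D$, e.g.\ the constants and trigonometric polynomials; since $f\phi_k\in\mathrm{Dom}(D)$ iff $f\in\mathrm{Dom}(D)$, and $H^1_{\mathrm{per}}$ is an algebra, this reading is equivalent to requiring $f\in\mathrm{Dom}(D)$ from the outset. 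Under that reading your mollification machinery is unnecessary: applying the commutator to $1$ gives $f\in\mathrm{Dom}(D)$ immediately, and boundedness on trigonometric polynomials $h$, where $[D,M_f]h=-if'h$, gives $\|Df\|_\infty\leq\|\overline{T}\|$. That short argument is precisely how this paper itself proceeds when it re-derives this implication inside the proof of Lemma \ref{lembddifflip}, and is presumably the argument intended in \cite{cil}.
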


Using curve spectral triples, Christensen, Ivan, and Lapidus constructed a spectral triple for the classical Sierpinski gasket that recovers the Hausdorff dimension, the geodesic metric, and the $\log_23$-dimensional Hausdorff measure. Later, Lapidus and Sarhad used the spectral triple for an $R$-curve to build a spectral triple for compact length spaces $X\subseteq \R^n$ satisfying the axioms below. We write $L(\gamma)$ for the length of the path $\gamma$ parameterized by arclength.

\begin{enumerate}[\textbf{Axiom} 1.]
\item $X =  \overline{\mathcal{R}}$ where $\mathcal{R} = \bigcup_{j=1}^\infty R_j$ and each $R_j$ is a $C^1$ rectifiable curve such that $L(R_j) \to 0$ as $j\to\infty$.
\item There is a dense set $\mathcal{B} \subset X$ which is such that for each $p\in \mathcal{B}$ and $q\in X$ one of the minimizing geodesics from $p$ to $q$ is given by a countable (or finite) concatenation of the $R_j$'s.  
\end{enumerate}

Notice that these two Axioms imply that $\mathcal{B}$ is a subset of the set of endpoints of the $R_j$. It follows that the set of endpoints of the $R_j$'s is dense in $X$. 
Proposition 1 in \cite{lapsar} states that for a compact length space $X$ satisfying Axiom 1, the direct sum of the spectral triples for the $R_j$ curves making up $X$ gives a spectral triple for $X$ and the operator $D$ in that spectral triple has eigenvalues
$$\sigma(D) = \bigcup_{j\geq0}\left\{\frac{(2k+1)\pi}{2\ell_j} : k\in\Z\right\},$$
where $\ell_j := L(R_j)$. 
Furthermore, in Theorem 2 of \cite{lapsar} Lapidus and Sarhad prove that for a compact length space $X$ with Axioms 1 and 2, the spectral distance induced by the direct sum spectral triple and the geodesic distance on $X$ are the same:
$$d_X(x, y) = d_{geo}(x, y) \ \ \text{ for } x, y\in X.$$

This result shows that the direct sum spectral triple for the classical Sierpinski gasket and for the harmonic Sierpinski gasket recovers geodesic distance. If one takes for the curves $R_j$ the edges of the triangles and the joining edges in the stretched Sierpinski gasket, $K_\al$,  then Axiom 2 is not satisfied and hence the theorem of Lapidus and Sarhad does not give that the spectral metric is the same as the geodesic metric on $K_\al$.  We will prove the recovery of the geodesic distance on $K_\al$ in the following section. In addition, we will show that the direct sum spectral triple for $K_\al$ recovers the Hausdorff dimension and Hausdorff measure on $K_\al$.
 
It was conjectured in \cite{lapsar} that the Hausdorff measure on the harmonic Sierpinski gasket $K_H$ with the geodesic metric can be recovered by the direct sum spectral triple via the Dixmier trace. In the section that follows we will show that the Dixmier trace recovers the standard self-affine measure on the harmonic Sierpinski gasket but does not recover the Hausdorff measure on $K_H$. 


\section{A spectral triple for the harmonic Sierpinski gasket, $K_H$.}
First we define curves which correspond to the edges in the graphs which approximate $SG$ and then get curves for the edges in $K_H$ via the homeomorphism $\Phi$. Let $R_j$ for $j\geq1$ be the continuous injective functions which map to the edges in the graphs $SG_n$:
\begin{align*}
& R_j:[0, 1] \to \R^2 \text{ for } j=1, 2, 3 \text{ be the edges in the graph } SG_0, \\
& R_j:[0, 2^{-1}] \to \R^2 \text{ for } j = 4, 5, \dots, 12 \text{ be the edges in the graph } SG_1, \\
& R_j:[0, 2^{-2}] \to \R^2 \text{ for } j = 13, 14, \dots, 39 \text{ be the edges in the graph } SG_2,
\end{align*}
and so on. The curves we use to build spectral triples are parameterized by arc length and the sets $R_j([0, 2^{-k}])$ are precisely the edges in the graph approximations of $SG$. For simplicity we write $R_j$ for $R_j([0, 2^{-k}])$. One can show that 
$$SG = \overline{ \bigcup_{j\geq1} R_j}$$
 (see \cite{lapsar}).

Applying the map $\Phi: SG \to K_H$, we get curves $\Phi(R_j)$. Set $\ell_j =  L(\Phi(R_j))$ and after a reparameterization we have curves
$$\{\Phi(R_j):[0, \ell_j]\to K_H \}_{j=1}^ \infty.$$
Again one can show that
$$K_H = \overline{ \bigcup_{j\geq1} \Phi(R_j)}$$ 
 (see \cite{lapsar}).
The direct sum of the spectral triples for the curves $\Phi(R_j)$ give a spectral triple for the harmonic Sierpinski gasket (by applying Proposition 1 in \cite{lapsar})
$$S(K_H) = \left(C(K_H), \ \ \bigoplus_{j\geq1} \Hil_{\ell_j}, \ \ D_{K_H} := \bigoplus_{j\geq1}D_{\ell_j}\right)$$
where $\ell_j = L(\Phi(R_j))$ and the representation is given by $\pi_H = \bigoplus_j\pi_{\ell_j}$. 

We begin by showing that the spectral dimension $\ds_H = \ds(K_H)$ of $K_H$ is finite. A direct computation or an application of Proposition 1 in \cite{lapsar} gives that 
$$\ds_H = \inf\left\{ s>1 : \sum_{j\geq 1} \ell_j^s <\infty\right\},$$
from which it follows that $\ds_H \geq 1$; however, one must show that $\ds_H <\infty$. In \cite{kig3} Kigami obtains bounds for the lengths $\ell_j=  L(\Phi(R_j))$. We will use these results in the lemma that follows.  


\begin{lemma}
The sum 
$$\sum_{j\geq 1} \ell_j^s$$
where $\ell_j = L(\Phi(R_j))$ converges for $s> \frac{\log{3}}{\log{5} - \log{3}} \approx 2.151$. In particular, 
$$1\leq \ds_H \leq \frac{\log{3}}{\log{5} - \log{3}}.$$
\end{lemma}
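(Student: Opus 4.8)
The plan is to organize the curves $\{\Phi(R_j)\}_{j\geq1}$ according to the level $n$ of the corresponding edge in $SG$ and to bound each length $\ell_j$ from above by the contraction ratios of the affine maps $H_j$. Recall that the edges of $SG_n$ are exactly the sets $f_w(e)$ with $w\in\{1,2,3\}^n$ and $e$ one of the three sides of the base triangle $T$; there are $3^{n+1}$ of them at level $n$, matching the indexing $3,9,27,\dots$ used to list the $R_j$. Using the intertwining relation $\Phi\circ f_j = H_j\circ\Phi$ from Figure \ref{comdi} iterated over the letters of $w$, the image of such an edge is $\Phi(f_w(e)) = H_w(\Phi(e))$, where $H_w$ is the corresponding length-$n$ composition of the maps $H_1,H_2,H_3$.

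The key estimate is the length distortion of $H_w$. Its linear part is $M_w = M_{w_1}\cdots M_{w_n}$, and since each $M_j$ is diagonal with entries $3/5$ and $1/5$ in the orthonormal basis $\{q_j, q'_j\}$, its operator norm is $\|M_j\| = 3/5$. By submultiplicativity of the operator norm, $\|M_w\|\leq (3/5)^n$. Since a rectifiable curve $\gamma$ satisfies $L(M_w\gamma) = \int |M_w\gamma'(t)|\,dt \leq \|M_w\|\,L(\gamma)$, I obtain
$$\ell_j = L(H_w(\Phi(e))) \leq (3/5)^n\,L(\Phi(e)) \leq C\,(3/5)^n,$$
where $C := \max_e L(\Phi(e))$ is the largest length among the three base-edge images, which is finite by the rectifiability results of Kigami in \cite{kig3}.

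With this bound the proof reduces to a geometric series. Summing over the $3^{n+1}$ edges at each level,
$$\sum_{j\geq1}\ell_j^s \leq \sum_{n\geq0} 3^{n+1}\bigl(C(3/5)^n\bigr)^s = 3C^s\sum_{n\geq0}\bigl(3\,(3/5)^s\bigr)^n,$$
which converges precisely when $3(3/5)^s < 1$, i.e. when $s > \log 3/(\log 5 - \log 3)$. Combined with the lower bound $\ds_H\geq1$ already noted from the expression $\ds_H = \inf\{s>1:\sum_j\ell_j^s<\infty\}$, this yields the claimed two-sided estimate.

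The main obstacle is the length-distortion step: one must ensure that the singular values of the composition $M_w$ are controlled by the product of the individual operator norms, so that the governing factor is $(3/5)^n$ rather than the smallest-singular-value product $(1/5)^n$. Submultiplicativity of the operator norm settles this cleanly, but it is exactly here that the self-affine (as opposed to self-similar) nature of $K_H$ enters, since the two distinct ratios $3/5$ and $1/5$ mean the upper bound $(3/5)^n$ is generally not sharp; only Kigami's finer length asymptotics would be needed to locate $\ds_H$ exactly within the interval.
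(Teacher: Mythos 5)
Your proof is correct, but the key length estimate is obtained by a different mechanism than in the paper, so the two are worth comparing. The paper does not bound $\ell_j$ by the length of a base curve; instead it invokes Lemma 5.6 of Kigami (reference [19], \emph{Measurable Riemannian geometry\dots}), which says that in $K_H$ the length of the edge curve joining two vertices of a cell is comparable to the diameter of that cell, $\frac{2}{5}\,\diam(\Phi(f_w(T))) \leq L(\Phi(R_j)) \leq 2\,\diam(\Phi(f_w(T)))$, and then bounds $\diam(\Phi(f_w(T))) = \diam(H_w(\Phi(T))) \leq (3/5)^m \diam(\Phi(T))$ using the same operator-norm estimate $\|M_w\|\leq (3/5)^m$ that you use. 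Your route replaces the nontrivial geometric input (length--diameter comparability on the harmonic gasket) by the elementary fact that an affine map stretches rectifiable-curve length by at most the operator norm of its linear part, applied to the three base curves $\Phi(e)$; the only external input you then need is that these three base lengths are finite, i.e.\ rectifiability, which is a more basic result of Kigami and is in any case presupposed by the construction of the direct-sum spectral triple $\bigoplus_j \Hil_{\ell_j}$. (One small caveat: your formula $L(M_w\gamma)=\int |M_w\gamma'(t)|\,dt$ presupposes a.e.\ differentiability, which holds for arc-length parameterizations since they are Lipschitz; alternatively the bound $L(H_w\gamma)\leq \|M_w\|L(\gamma)$ follows directly from the supremum-over-partitions definition of length, with no smoothness at all.) Both arguments produce the same per-level bound $\ell_j \leq \mathrm{const}\cdot (3/5)^n$ for the $3^{n+1}$ level-$n$ curves, hence the same geometric series and the same threshold $s > \log 3/(\log 5 - \log 3)$, and your lower bound $\ds_H \geq 1$ is cited exactly as in the paper. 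What the paper's detour through diameters buys is a two-sided estimate (lengths are also bounded below by a multiple of the cell diameter), which is not needed for this lemma; what your version buys is a shorter and more self-contained argument.
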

\begin{proof}
Let $p, q \in \Phi(f_w(V_0))$ for some word $w$ of length $|w| = m$ and let $\Phi(R_j)$ be the curve in $K_H$ which connects $p$ and $q$. 
By Lemma 5.6 in \cite{kig3},
$$\frac{2}{5}\  \diam(\Phi(f_w(T))) \leq L(\Phi(R_j)) \leq 2 \ \diam(\Phi(f_w(T))).$$
Note that
\begin{align*}
\diam(\Phi(f_w(T))) &= \sup\{ |\Phi(f_w(x)) -\Phi(f_w(y))| : x, y\in T\}\\
& = \sup\{|H_w(\Phi(x)) - H_w(\Phi(y))|: x, y \in T\}\\
& \leq \left(\frac{3}{5}\right)^m \sup\{|\Phi(x)- \Phi(y)|: x, y\in T\},
\end{align*}
so $\diam(\Phi(f_w(T))) \leq c\left(\frac{3}{5}\right)^m$, where $c$ is some constant not depending on $w$. 
Then
\begin{align}
\sum_{j=1}^\infty \ell_j^s &\leq \sum_{m=0}^\infty 3^{m+1} 2c \left(\frac{3}{5}\right)^{ms}\\
& = 6c\sum_{m=0}^\infty \left( \frac{3^{s+1}}{5^s}\right)^m\\
& = 6c\ \frac{5^s}{5^s - 3^{s+1}},\label{sumconv}
\end{align}
where we have assumed that $s> \frac{\log{3}}{\log{5} - \log{3}}$ in equality (\ref{sumconv}). From this and Proposition 1 in \cite{lapsar} we have that 
$$1\leq \ds_h \leq \frac{\log{3}}{\log{5} - \log{3}}.$$
\end{proof}

According to the previously mentioned results of Alain Connes in \cite{co}, the map $\trw( \pi_H(\cdot) |D_{K_H}|^{-\ds_H})$ is then a positive linear functional on $C(K_H)$.

For $n\geq 0$ and $k\in \{1, 2, \dots, 3^n\}$, write $\Delta_{n, k}$ for the $3^n$ triangles in $SG_n$ and for $j\in \{1, 2, 3\}$ write $x_{n, k, j}$ for the midpoints of the edges of these triangles. For $n\geq0$, define a positive linear functional $\psi_n: C(SG) \to \C$ of norm 1, by
$$\psi_n(f) = \frac{1}{3^{n+1}}\sum_{k=1}^{3^n} \sum_{j=1}^3 f(x_{n, k, j}).$$
In Proposition 8.6 of \cite{cil} it was shown that the sequence $\{\psi_n\}$ converges in the weak-$^\ast$ topology on the dual of $C(SG)$ to the positive linear functional $\psi$ given by
$$\psi(f) := \int_{SG} f \ d\calH\  ,$$
where $\calH$ is the $\frac{\log3}{\log 2}$-dimensional Hausdorff probability measure on $SG$. 

Recall that the map $\Phi:SG \to K_H$ is a homeomorphism when we give $SG$ and $K_H$ the topology induced by the Euclidean metric in $\R^2$ and $\R^3$, respectively. In $SG$ the Euclidean metric and the geodesic metric are equivalent, but in $K_H$ this is not the case \cite{kig3}. However, one can say that the geodesic metric on $K_H$, denoted $d_{geo}(\cdot, \cdot)$, satisfies $ |\cdot| \leq d_{geo}(\cdot, \cdot)$ where $|\cdot|$ is the Euclidean metric. Then $\Phi: (SG, |\cdot|) \to (K_H, d_{geo})$ is still a bijection and $\Phi^{-1}$ is a continuous map. From here forward, we will endow the harmonic Sierpinski gasket with the geodesic metric. 

\begin{lemma}\label{contfunc}
If $h\in C(K_H)$, then $h\circ \Phi\in C(SG)$. 
\end{lemma}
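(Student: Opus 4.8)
The plan is to reduce the statement to the continuity of $\Phi$ as a map between the appropriate metric spaces. Since $h$ is continuous on $(K_H, d_{geo})$, it suffices to show that $\Phi\colon (SG, |\cdot|)\to (K_H, d_{geo})$ is continuous, for then $h\circ\Phi$ is a composition of continuous maps (recall that the Euclidean and geodesic topologies agree on $SG$). The point to keep in mind is that one cannot simply invoke the fact that $\Phi$ is a homeomorphism for the Euclidean metrics: on $K_H$ the geodesic topology is strictly finer than the Euclidean one, so the identity $(K_H,|\cdot|)\to(K_H,d_{geo})$ is not continuous and the composition trick fails. The continuity of $\Phi$ into the finer target must therefore be extracted from the self-affine cell structure.

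The key estimate I would establish first is a bound on the geodesic diameter of the images of cells. Using the intertwining relation $\Phi\circ f_w = H_w\circ\Phi$, one has $\Phi(f_w(SG)) = H_w(K_H)$ for every word $w$. Each $H_j$ is affine with linear part $M_j$, and since $M_j$ is symmetric with eigenvalues $3/5$ and $1/5$ on an orthonormal eigenbasis, $\|M_j\|_{\op} = 3/5$; hence $H_j$ contracts the length of any rectifiable path by a factor of at most $3/5$, and $H_w$ by a factor of at most $(3/5)^{|w|}$. Consequently, for $a,b\in H_w(K_H)$, writing $a=H_w(a')$ and $b=H_w(b')$ and applying $H_w$ to a path of length at most $D$ joining $a'$ to $b'$, I get $d_{geo}(a,b)\le (3/5)^{|w|} D$, where $D:=\diam_{geo}(K_H)<\infty$. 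Thus $\diam_{geo}(\Phi(f_w(SG)))\le (3/5)^{|w|} D\to 0$ as $|w|\to\infty$.

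With this in hand, continuity of $\Phi$ follows by a standard neighborhood argument. Fix $x\in SG$ and $\ep>0$, and choose $m$ with $(3/5)^m D<\ep$. The cells $\{f_w(SG):|w|=m\}$ are finitely many closed sets covering $SG$; those not containing $x$ lie at a positive Euclidean distance $\delta>0$ from $x$, so every $y\in SG$ with $|x-y|<\delta$ lies in some level-$m$ cell $f_w(SG)$ that also contains $x$. For such $y$ we obtain $d_{geo}(\Phi(x),\Phi(y))\le \diam_{geo}(\Phi(f_w(SG)))\le (3/5)^m D<\ep$. Hence $\Phi$ is continuous from $(SG,|\cdot|)$ to $(K_H,d_{geo})$, and $h\circ\Phi\in C(SG)$.

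I expect the main obstacle to be the geodesic-diameter contraction estimate: it requires knowing that $H_w$ genuinely contracts path length (which rests on $\|M_j\|_{\op}=3/5$ rather than on any self-similarity) and that $K_H$ has finite geodesic diameter $D$. The latter can be justified from the cell decomposition---every point is joined to a level-$0$ vertex by a path whose length is dominated by a geometric series $\sum_m c(3/5)^m$---or directly from the fact that $K_H$ is a compact length space built from the rectifiable curves $\Phi(R_j)$. The remaining steps are soft point-set topology.
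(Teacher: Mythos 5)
Your proof is correct and, at its core, follows the same route as the paper: both arguments hinge on the fact that the geodesic diameters of the images of the level-$m$ cells tend to zero uniformly in $m$, followed by a covering/uniform-continuity argument. The difference is in how the key estimate is obtained: the paper gets it from the perimeter/length bounds $L(\Phi(R_j))\leq 2\,\diam(\Phi(f_w(T)))\leq 2c\left(\frac{3}{5}\right)^{|w|}$ (Kigami's Lemma 5.6, already invoked in the lemma preceding this one), whereas you derive it self-containedly from $\|M_j\|_{\op}=\frac{3}{5}$ and contraction of path lengths, and you also spell out the point-set step that the paper compresses into ``it follows.'' One remark should be corrected: your parenthetical claim that the geodesic topology on $K_H$ is \emph{strictly} finer than the Euclidean topology is false, and is in fact contradicted by the very lemma you prove --- once $\Phi\colon (SG,|\cdot|)\to (K_H,d_{geo})$ is a continuous bijection from a compact space to a Hausdorff space, it is a homeomorphism, so the two topologies on $K_H$ coincide. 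What is true (and all your proof actually needs) is that $d_{geo}$ and $|\cdot|$ fail to be bi-Lipschitz equivalent on $K_H$, so continuity of $\Phi$ into $(K_H,d_{geo})$ cannot simply be quoted from the Euclidean homeomorphism and requires exactly the kind of cell-diameter estimate you supply.
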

\begin{proof}
Let $\ep>0$ and $h \in C(K_H)$. Then there is a $\delta>0$ such that $ d_{geo}(\phi(x), \phi(y)) <\delta$ implies $|h\circ \Phi(x) -h\circ \Phi(y)| <\ep.$ Since the perimeter of the ``triangles", $\Phi(\Delta_{n, j})$, goes to zero as $n$ grows, we can choose an $n_0$ large enough so that the perimeter of $\Phi(\Delta_{n_0, j})$ is small enough and $d_{geo}(\phi(x), \phi(y)) <\delta$ for $x,y$ in the portion of $SG$ contained within $\Delta_{n_0, j}$. It follows that $h\circ \Phi$ is continuous from $(SG, |\cdot|)$ to $\R$. 
\end{proof}

Let $\tilde \psi_n$ be the positive linear functional on $C(K_H)$ given by 
$$ \tilde\psi_n(h) = \frac{1}{3^{n+1}}\sum_{k=1}^{3^n} \sum_{j=1}^3 h(\Phi(x_{n, k, j})) = \psi_n(h\circ \Phi),$$
where $h\in C(K_H)$.

\begin{proposition}
The sequence $\{\tilde\psi_n\}$ converges in the weak-$^\ast$topology on the dual of $C(K_H)$ to the positive linear functional given by
$$\tilde\psi(h) := \int_{SG} h\circ\Phi(x) \ d\h(x) = \int_{K_H} h(y) \ d (\h\circ \Phi^{-1})(y),$$
where $\calH$ is the $\log_23$-Hausdorff probability measure on $SG$ and $h\in C(K_H)$. 
Also, $\tilde \psi$ has the property
$$\tilde\psi (h) = \frac{1}{3}\sum_{j=1}^3 \tilde\psi (h\circ H_j),$$
where $h\in C(K_H)$ and $H_j$ for $j = 1, 2, 3$ are the affine maps which determine $K_H$. 
\end{proposition}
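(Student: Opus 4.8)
The plan is to reduce both assertions to facts already in hand: the weak-$*$ convergence $\psi_n\to\psi$ from Proposition 8.6 of \cite{cil}, the continuity fact recorded in Lemma \ref{contfunc}, and the commutation relation $\Phi\circ f_j = H_j\circ\Phi$ from the diagram in Figure \ref{comdi}.

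For the convergence statement, fix $h\in C(K_H)$. By Lemma \ref{contfunc} we have $h\circ\Phi\in C(SG)$, so from the definition $\tilde\psi_n(h) = \psi_n(h\circ\Phi)$ and the weak-$*$ convergence $\psi_n\to\psi$ I would conclude
$$\tilde\psi_n(h) = \psi_n(h\circ\Phi) \longrightarrow \psi(h\circ\Phi) = \int_{SG} h\circ\Phi\, d\h = \tilde\psi(h).$$
Since $h$ is arbitrary this is exactly weak-$*$ convergence $\tilde\psi_n\to\tilde\psi$ on the dual of $C(K_H)$. The second equality in the formula for $\tilde\psi$ is just the change-of-variables (pushforward) identity for the homeomorphism $\Phi$: the measure $\h\circ\Phi^{-1}$ is by definition the image of $\h$ under $\Phi$, so $\int_{SG} g\circ\Phi\, d\h = \int_{K_H} g\, d(\h\circ\Phi^{-1})$ for every $g\in C(K_H)$.

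For the self-affinity identity the plan is to first prove a finite-level recursion and then pass to the limit. I would establish
$$\tilde\psi_{n+1}(h) = \frac{1}{3}\sum_{j=1}^3 \tilde\psi_n(h\circ H_j) \qquad (n\geq 0)$$
by a counting argument on edge midpoints. The three contraction similarities $f_1,f_2,f_3$ send the $3^n$ triangles $\Delta_{n,k}$ of $SG_n$ bijectively onto the $3^{n+1}$ triangles of $SG_{n+1}$, and since each $f_j$ is an affine similarity it carries midpoints of edges to midpoints of edges. Hence $\{f_j(x_{n,k,i})\}_{j,k,i}$ is precisely the collection $\{x_{n+1,k',i'}\}_{k',i'}$ of level-$(n+1)$ edge midpoints, with no repetitions (distinct small triangles meet only at vertices, not along edges). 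Using $\Phi\circ f_j = H_j\circ\Phi$ I rewrite $h(\Phi(x_{n+1,k',i'})) = h(\Phi(f_j(x_{n,k,i}))) = (h\circ H_j)(\Phi(x_{n,k,i}))$; summing over all midpoints and dividing by $3^{n+2}$ then yields the recursion. Finally, because $H_j$ maps $K_H$ into $K_H$ we have $h\circ H_j\in C(K_H)$, so by the convergence already proved both sides converge as $n\to\infty$, giving $\tilde\psi(h) = \frac{1}{3}\sum_{j=1}^3\tilde\psi(h\circ H_j)$.

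I expect the main obstacle to be the bookkeeping in the finite-level recursion: one must verify carefully that $(j,k,i)\mapsto f_j(x_{n,k,i})$ is a bijection onto the level-$(n+1)$ midpoints, i.e. that the similarities preserve edge midpoints and that the images of the triangles tile $SG_{n+1}$ without producing coincident midpoints. An alternative that sidesteps this combinatorics is to invoke the self-similarity of the Hausdorff measure on $SG$, namely $\int_{SG} g\, d\h = \frac{1}{3}\sum_{j=1}^3\int_{SG} g\circ f_j\, d\h$; applying this to $g = h\circ\Phi$ and using $\Phi\circ f_j = H_j\circ\Phi$ delivers the identity at once, at the cost of citing the self-similarity of $\h$ as an external fact rather than deriving it from the explicit functionals $\tilde\psi_n$.
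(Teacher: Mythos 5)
Your proof is correct, and for the self-affinity identity your primary route is genuinely different from the paper's. The convergence half is the same argument the paper gives (Lemma \ref{contfunc} plus weak-$\ast$ convergence of $\psi_n$ and the pushforward identity). For the invariance property, however, the paper does not use your finite-level recursion: it pulls the desired identity back to $SG$ via $H_j\circ\Phi = \Phi\circ f_j$, rewriting it as $\int_{SG} h\circ\Phi\, d\h = \frac{1}{3}\sum_{j=1}^3\int_{SG} (h\circ\Phi)\circ f_j\, d\h$, and then simply invokes the self-similarity (Hutchinson invariance) of the Hausdorff measure $\h$ on $SG$ --- i.e., exactly the ``alternative'' you mention in your last paragraph. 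Your primary route instead proves $\tilde\psi_{n+1}(h) = \frac{1}{3}\sum_{j=1}^3\tilde\psi_n(h\circ H_j)$ combinatorially and passes to the limit; this is sound, and in fact it is the same strategy the paper itself uses later for the stretched gasket (Proposition \ref{psi}, where $\frac{1}{3}\sum_j\psi_{\al,n}(f\circ F_j)=\psi_{\al,n+1}(f)$ is established and then $n\to\infty$). What each buys: the paper's argument is shorter but leans on the invariance property of $\h$ as an external fact (Hutchinson, \cite{hut}); yours is more self-contained, deriving the invariance of the limit functional directly from the structure of the approximating functionals, at the cost of the midpoint bookkeeping. On that bookkeeping, note that your worry about coincident midpoints is not actually essential: the sums defining $\psi_n$ run over triangle/edge \emph{indices} (words), not over distinct points, so all you need is the word-level bijection $\{1,2,3\}\times\{1,2,3\}^n\to\{1,2,3\}^{n+1}$ together with the fact that the similarities $f_j$ carry edge midpoints to edge midpoints --- both immediate. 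One small point you gloss over (as does the paper): for $\tilde\psi_n(h\circ H_j)$ and its limit to make sense one needs $h\circ H_j\in C(K_H)$ with respect to the geodesic topology, which holds because each $H_j$ contracts path lengths and is therefore Lipschitz for $d_{geo}$.
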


\begin{proof}
That $\tilde\psi_n \to \tilde\psi$ follows from the fact that $\psi_n \to\psi$ and that, according to Lemma \ref{contfunc}, $h\circ \Phi \in C(SG)$ whenever $h\in C(K_H)$.

To see that $\tilde\psi$ satisfies the stated property, note that the condition $\tilde\psi (h) = \frac{1}{3}\sum_{j=1}^3 \tilde\psi (h\circ H_j)$ is the same as 
$$\int_{SG} h\circ \Phi \ d\h = \frac{1}{3}\sum_{j=1}^3 \int_{SG} h\circ H_j \circ\Phi \ d\h$$ 
and since $H_j \circ \Phi = \Phi \circ f_j$, where $f_j$ are the similarities defining $SG$, this condition is the same as
$$\int_{SG} h\circ \Phi \ d\h = \frac{1}{3}\sum_{j=1}^3 \int_{SG} h\circ \Phi\circ f_j \ d\h.$$ 
This condition holds since $h\circ \Phi \in C(SG)$ whenever $h\in C(K_H)$ and since $\h$ is the unique self-similar measure on $SG$ satisfying,
$$\int_{SG} g \ d\h = \frac{1}{3}\sum_{j=1}^3 \int_{SG} g \circ f_j \ d\h \ \ \ \ \ \text{ for all } g\in C(SG).$$
\end{proof}

We can now use this spectral triple to recover the standard self-affine measure on $K_H$. Self-affine measures such as this are described by Hutchinson in \cite{hut}. 

\begin{proposition}
Let $\tau: C(K_H) \to \C$ be given by $\tau(h):= \trw (\pi_H(h)|D_{K_H}|^{-\ds_H})$. Then 
$$\tau(h) =  \trw (\pi_H(h)|D_{K_H}|^{-\ds_H})= c\int_{K_H} h(x)\ d\mu,$$
where 
$\mu$ is the unique self-affine measure on $K_H$ satisfying,
$$\int h \ d\mu = \frac{1}{3}\sum_{j=1}^3\int (h\circ H_j) d\mu \ \ \ \ \text{ for each } f \in C(K_H).$$
\end{proposition}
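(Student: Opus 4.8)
The plan is to compute the operator trace $\tr(\pi_H(h)|D_{K_H}|^{-s})$ explicitly as a Dirichlet series, identify its abscissa of convergence with $\ds_H$, and then use Connes' Theorem \ref{co} to realize the Dixmier trace $\tau(h)$ as the residue of this series at $s=\ds_H$. Since $\pi_H(h)=\bigoplus_j\pi_{\ell_j}(h)$ and $|D_{K_H}|^{-s}=\bigoplus_j|D_{\ell_j}|^{-s}$, the trace splits over the curves. For a single curve I would compute the trace in the orthonormal eigenbasis $\{\phi_k^{\ell_j}\}$ of $|D_{\ell_j}|^{-s}$; because $|\phi_k^{\ell_j}|\equiv1$, each diagonal entry of $\pi_{\ell_j}(h)$ equals the arclength average $\bar h_j:=\frac{1}{\ell_j}\int_0^{\ell_j}h(\Phi(R_j)(t))\,dt$ of $h$ along $\Phi(R_j)$, so that
$$\tr(\pi_H(h)|D_{K_H}|^{-s})=2(1-2^{-s})\zeta(s)\Big(\tfrac{2}{\pi}\Big)^{s}\sum_{j\geq1}\bar h_j\,\ell_j^{\,s}.$$
The prefactor $2(1-2^{-s})\zeta(s)(2/\pi)^s$ is analytic and nonvanishing near $s=\ds_H>1$, so the only singularity comes from $\sum_j\bar h_j\ell_j^s$, which has the same abscissa of convergence $\ds_H$ as $\sum_j\ell_j^s$ from the previous lemma.

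Next, for $h\geq0$ I would invoke Theorem \ref{co}, applied to the positive operator obtained by the usual symmetrization so that its leading singular-value asymptotics are governed by this Dirichlet series, to obtain
$$\tau(h)=\trw(\pi_H(h)|D_{K_H}|^{-\ds_H})=C\lim_{s\to\ds_H^+}(s-\ds_H)\sum_{j\geq1}\bar h_j\,\ell_j^{\,s},$$
for a positive constant $C$ depending only on $\ds_H$, and extend to arbitrary $h$ by linearity. The existence of this limit together with $|\bar h_j|\leq\|h\|_\infty$ shows that $\tau$ is a bounded positive linear functional on $C(K_H)$ with $|\tau(h)|\leq\|h\|_\infty\,\tau(1)$; by the Riesz representation theorem $\tau(h)=c\int_{K_H}h\,d\mu$ for a probability measure $\mu$ and $c=\tau(1)$. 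It then remains only to identify $\mu$.

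The core of the argument is to show that $\mu$ is the self-affine measure, equivalently that $\tau(h)=\frac13\sum_{i=1}^3\tau(h\circ H_i)$. Using $H_i\circ\Phi=\Phi\circ f_i$, I would decompose the family of edge-curves by the three subcells: every level-$n$ edge with $n\geq1$ lies in exactly one subcell and is the image under some $f_i$ of a unique lower-level edge, while the three level-$0$ edges contribute only finitely many bounded terms and hence vanish after multiplication by $(s-\ds_H)$. This yields $\tau(h)=\sum_{i=1}^3\tau_i(h)$, where $\tau_i$ is the residue of the subsum over subcell $i$, and reparameterizing by the bijection $R=f_i(R')$ rewrites $\tau_i(h)$ in terms of $\overline{(h\circ H_i)}_{R'}$ and the distorted lengths $\ell_{f_i(R')}=L(H_i(\Phi(R')))$. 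Because the curves shrink uniformly ($\ell_j\to0$) and $h\circ\Phi\in C(SG)$ is uniformly continuous by Lemma \ref{contfunc}, each arclength average may be replaced, with an error that vanishes in the residue limit, by the value of $h$ at the corresponding midpoint; this reduces the whole computation to the combinatorial midpoint sums $\psi_n$, which converge to $\tilde\psi(h)=\int_{SG}h\circ\Phi\,d\h$. Since the previous proposition shows $\tilde\psi$ satisfies exactly the self-affinity $\tilde\psi(h)=\frac13\sum_i\tilde\psi(h\circ H_i)$ and since, by Hutchinson \cite{hut}, there is a unique probability measure with this property, I conclude $\mu=\h\circ\Phi^{-1}$ and $\tau(h)=c\int_{K_H}h\,d\mu$.

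The main obstacle is precisely the self-affinity: because the maps $H_i$ are affine but not similarities (their linear parts $M_i$ scale by $3/5$ and $1/5$ in orthogonal directions), the lengths $\ell_j$ do not rescale by a common factor across a subcell, so the subsum residues $\tau_i$ cannot be related to $\tau(h\circ H_i)$ by a single scalar as in the self-similar case of \cite{cil}. The crux is therefore to prove that this direction-dependent length distortion is asymptotically negligible in the $s\to\ds_H^+$ limit, i.e. that the length-weighted averages $\big(\sum_j\bar h_j\ell_j^s\big)\big/\big(\sum_j\ell_j^s\big)$ converge to $\tilde\psi(h)$, which I would establish from the uniform shrinking of the curves, the diameter bounds of Kigami used in the previous lemma, and the threefold symmetry of $K_H$ that forces each subcell to carry an equal share of the weighted length in the limit.
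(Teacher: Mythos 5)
Your route is genuinely different in mechanics from the paper's: you insert $h$ into the trace and work with the weighted Dirichlet series $\beta_s\sum_j\bar h_j\ell_j^s$, then try to extract the Dixmier trace as a residue, whereas the paper never computes a trace with $h$ inserted at all — it splits $S(K_H)$ into sub-triples over the level-$n_0$ cells, sandwiches $h$ between functions nearly constant on each cell, and uses only positivity/order-preservation of $\trw$ together with the weak-$*$ convergence $\tilde\psi_n\to\tilde\psi$ and Hutchinson uniqueness (this last endgame you share). However, your proposal has a genuine gap at exactly the step you call the crux, and the fix you offer does not work. What you need is that every level-$n_0$ cell $H_w(K_H)$, $|w|=n_0$, carries exactly the share $3^{-n_0}$ of the residue (equivalently, of the Dixmier mass). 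Your justification is ``the threefold symmetry of $K_H$ forces each subcell to carry an equal share.'' Symmetry does give this at level one: the dihedral group acts on $K_H$ by isometries of $\R^3$ and permutes $H_1(K_H),H_2(K_H),H_3(K_H)$, so those three sub-triples are unitarily equivalent. But at depth two and beyond it gives nothing: a symmetry $g$ sends $H_iH_j(K_H)$ to $H_{g(i)}H_{g(j)}(K_H)$, so $H_1H_1(K_H)$ and $H_1H_2(K_H)$ lie in different orbits, and they are non-congruent affine copies of $K_H$ — $M_1M_1$ has singular values $9/25$ and $1/25$ while $M_1M_2$ has singular values approximately $0.224$ and $0.064$ — so their curve-length sequences, hence their partial Dirichlet series, are genuinely different and no isometry matches them up. Since the uniform-continuity reduction forces $n_0\to\infty$ as $\ep\to 0$, equal shares are needed at arbitrarily deep levels, precisely where symmetry is silent. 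This is exactly where self-affinity (as opposed to self-similarity) bites: in \cite{cil} the corresponding step is immediate because all level-$n_0$ cells of $SG$ are congruent scaled copies, and that is what fails for $K_H$.

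Two further remarks. First, the identity you are missing is precisely the identity the paper's own proof relies on, namely $\tau_{n_0,k}(I_{n_0,k})=3^{-n_0}\tau(I)$; the paper attributes it to ``linearity of the Dixmier trace,'' but linearity only yields $\sum_{k}\tau_{n_0,k}(I_{n_0,k})=\tau(I)$, not equidistribution over $k$. So you have correctly isolated the load-bearing step of the whole proposition; you simply have not proved it, and neither your symmetry heuristic nor an appeal to Kigami's diameter bounds (which control each $\ell_j$ only up to multiplicative constants, far too coarse to pin residues) closes it. Second, a more technical but real gap: Theorem \ref{co} applies to a \emph{positive} operator, while $\pi_H(h)|D_{K_H}|^{-\ds_H}$ is not positive; your symmetrization $\pi_H(h^{1/2})|D_{K_H}|^{-\ds_H}\pi_H(h^{1/2})$ is positive, but its $s$-th powers are not $\pi_H(h^{s/2})|D_{K_H}|^{-\ds_H s}\pi_H(h^{s/2})$, so its singular-value zeta function is not your weighted Dirichlet series, and a residue formula ``with coefficient'' requires a stronger Tauberian statement than Theorem \ref{co}. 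The paper's sandwich argument is designed to avoid exactly this: it applies $\trw$ only to the constant function and handles general $h$ by positivity.
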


\begin{proof}
Let $h \in C(K_H)$ and $\ep>0$. Choose $n_0 \in\N$ such that for any $k \in\{1, 2, \dots, 3^{n_0}\}$ and $x, y$ inside or on the ``triangle", $\Phi(\Delta_{n_0, k}),$ we have $|h(x)-h(y)| <\ep$. Let $n>n_0$ and define
$$v_{n_0, k}^n (h) = \frac{1}{3^{(n-n_0)+1}} \sum_{i_k}\sum_{j=1}^3 h(\Phi(x_{n, i_k, j})),$$
where the $x_{n, i_k, j}$ are the midpoints of the edges in the triangles in the $n$-th step construction of the gasket, $SG_n$, which are contained in or on the border of $\Delta_{n_0, k}$. Note the dependence of $i_k$ on $k$ and that the number of terms in the sum $\sum_{i_k}$ is precisely $3^{n-n_0}$.
Denote by $\Phi(SG_{n_0, k})$ the image under $\Phi$ of the portion of $SG$ in $\Delta_{n_0, k}$, and $I_{n_0, k}$ and $h_{n_0, k}$ for the restrictions of the functions $I= 1$ and $h$ on $K_H$ to $\Phi(SG_{n_0,k})$.
 
Notice that
\begin{align*}
 \left |v_{n_0, k}^n (h)I_{n_0,k} - h_{n_0, k}\right| 
&= \left|\frac{1}{3^{(n-n_0)+1}} \sum_{i_k}\sum_{j=1}^3 h(\Phi(x_{n, i_k, j}))I_{n_0,k}-\frac{1}{3^{(n-n_0)+1}} \sum_{i_k}\sum_{j=1}^3 h_{n_0,k}\right|\\
&\leq \frac{1}{3^{(n-n_0)+1}} \sum_{i_k}\sum_{j=1}^3 \left |h(\Phi(x_{n, i_k, j}))I_{n_0,k}- h_{n_0, k}\right|\\
&< \frac{1}{3^{(n-n_0)+1}} \sum_{i_k}\sum_{j=1}^3 \ep\\
&=\ep,
\end{align*}
so we have the inequalities,
$$- \ep I_{n_0, k}\leq v_{n_0, k}^n(h)I_{n_0, k} -  h_{n_0,k}\leq \ep I_{n_0, k}.$$
Now, for each space $\Phi(SG_{n_0, k})$, we can define a spectral triple by deleting all summands in $S(K_H)$ which correspond to an edge not in $\Phi(SG_{n_0, k})$. For such a triple we get the corresponding functional $\tau_{n_0, k}$. By the linearity of the Dixmier trace and the fact that as operators $\pi_H(h) = \sum_{k=1}^{3^{n_0}} \pi_{n_0, k}(h)$ (where $\pi_{n_0, k}$ are the representations corresponding to the triples for $\Phi(SG_{n_0, k})$), we have
$$\tau(h) = \sum_{k=1}^{3^{n_0}} \tau_{n_0, k}(h_{n_0,k}) \text{ and } \tau_{n_0, k}(I_{n_0, k})= 3^{-n_0}\tau(I).$$
As $\tau$ is a positive linear functional and hence preserves order, we have
$$- \ep \ \tau_{n_0, k}(I_{n_0, k})\leq v_{n_0, k}^n(h)\tau_{n_0, k}(I_{n_0, k}) -  \tau_{n_0, k}(h_{n_0,k})\leq \ep\ \tau_{n_0, k}(I_{n_0, k})$$ 
and hence
$$- \ep \ 3^{-n_0}\tau(I) \leq v_{n_0, k}^n(h)3^{-n_0}\tau(I)-  \tau_{n_0, k}(h_{n_0,k})\leq \ep \ 3^{-n_0}\tau(I).$$
Summing over $k\in\{1, 2, \dots, 3^{n_0}\},$ we get 
$$- \ep\ \tau(I)\leq 3^{-n_0}\sum_{k=1}^{3^{n_0}}v_{n_0, k}^n(h)\tau(I) - \tau(h)\leq \ep\ \tau(I)$$ 
and using the fact that $\tilde \psi_n = 3^{-n_0}\sum_{k=1}^{3^{n_0}} v_{n_0, k}^n,$ we find
$$- \ep\ \tau(I)\leq \tilde\psi_n(h)\tau(I) - \tau(h)\leq \ep\ \tau(I).$$ 
Letting $n\to \infty$ we have 
$$- \ep\ \tau(I)\leq \tilde\psi(h)\tau(I) - \tau(h)\leq \ep\ \tau(I)$$ 
and hence $\left|\tau(I)\tilde\psi(h)-\tau(h)\right|< \tau(I)\ep$. This gives
$$\trw (\pi_H(h)|D_{K_H}|^{-\ds_H})= c\int_{K_H} h(x)\ d\mu\ ,$$ 
where $c = \tau(I)$.
\end{proof}

It was previously conjectured that the Dixmier trace on $K_H$ would recover the Hausdorff measure on $(K_H, d_{geo})$; however, the Dixmier trace recovers the self-affine measure of weights $1/3$ and it can be shown that this self-affine measure is not the same as the Hausdorff measure on $K_H$, \cite{kaj1}. Briefly, the value of $\mu$, the self-affine measure of weights $1/3$, on sets of the form $H_w(K_H)$ where $H_w = H_{w_1w_2\dots w_k}$, is given by $\mu(H_w(K_H)) = \left(\frac{1}{3}\right)^{|w|} \mu(K_H) = \left(\frac{1}{3}\right)^{|w|}$. This means the value of $\mu$ on a set like $H_w(K_H)$ is completely determined by the \emph{length} of the word $w$. It was shown by Kajino in Proposition 6.4 of \cite{kaj} that there exist positive constants $c_1, c_2$ such that
$$c_1\|M_w\|^d \leq \calH^d(H_w(K_H)) \leq c_2 \|M_w\|^d,$$ 
where $d$ is the Hausdorff dimension of $(K_H, d_{geo})$, $M_w = M_{w_1}\cdots M_{w_k}$, and the $M_{w_i}$ are the matrices in the definition of the maps $H_j: K_H\to K_H$ which determine $K_H$. Changing the word $w$ can drastically change the norm of the matrices $M_w$ and hence the value of 
$\calH^d(H_w(K_H))$. With a bit more work, these facts show that the self-affine measure is not the same as the Hausdorff measure and hence this construction of a spectral triple for $K_H$ cannot recover the Hausdorff measure. 
It would be interesting to see what kind of spectral triple on $K_H$ would recover the Hausdorff measure.


\section{Spectral triple for the stretched Sierpinski gasket, $K_\al$.}

In this section we will consider the direct sum curve triple for the stretched Sierpinski gasket and show that it recovers the Hausdorff dimension, the geodesic metric, and the Hausdorff measure on $K_\al$. These results are of interest since the space $K_\al$ is a self-affine space as opposed to a self-similar space. In general, self-affine spaces are more difficult to study than their structure rich self-similar sisters. 

Let us introduce some notation. The notation will be similar to that used for the Sierpinski gasket, but will include a superscript $s$ to indicate that we are working with the \emph{stretched} Sierpinski gasket. Let $p_1 = (0, 0), \ p_2 = \left(\frac{1}{2}, \frac{\sqrt{3}}{2}\right)$, and $p_3 = (1,0)$ as before. Define,
$$V^s_0 := \{p_1, p_2, p_3\} \ \ \ \text{ and for } n\geq1, \text{ let } \ \ \ V^s_n := \bigcup_{w\in\{1, 2, 3\}^n} F_w(\{p_1, p_2, p_3\}),$$
where $w = w_1w_2\cdots w_n\in \{1, 2, 3\}^n$, $F_w = F_{w_n}\circ \cdots \circ F_{w_2} \circ F_{w_1}$, and the $F_j$'s are as in (\ref{defssg}).
These are the vertices of the triangles in the approximations of the stretched Sierpinski gasket. Let $V^{s\ast} := \bigcup_{n\geq0}V^s_n$, the set of all vertices of the triangles in $K_\al$. 

It will be important to distinguish between the two different types of edges in the graph approximations of $K_\al$, namely the triangle edges and the edges joining the triangles. For $x, y\in \R^2$, the symbols $[x\to y]$ or $(x \to y)$ refer to the line segment in $\R^2$ connecting $x$ and $y$ which include or exclude the points $x$ and $y$, respectively. Define
$$T_0 := \{ [p_j \to p_i]: i, j= 1, 2, 3 \text{ and }  j\neq i\}$$ 
(the edges in the outer triangle) and for $n\geq1$,
$$T_n := \{ [x \to y]:  \exists \ w\in\{1, 2, 3\}^n \text{ such that } x, y \in F_w(V^s_0)\}$$
(edges in the triangles at level $n$).
The set $T_n$ is the collection of triangle edges in the $n$-th level approximation of the stretched Sierpinski gasket. Recall the notation for the joining edges in $K_\al$: $J_0 = \emptyset$ and for $n\geq1$
$$ J_n = \bigcup_{m=0}^{n-1} \bigcup_{w\in \{1, 2, 3\}^m} F_w\left( \bigcup_{i=1}^3 e_i\right),$$
where $e_1, e_2, e_3$ are the three initial joining edges. Also, $J^* = \cup_{n\geq1} J_n$. 

We would like to distinguish between the collection of \emph{points} in $K_\al$ which lie in the sets $J_n$ and the collection of \emph{edges} that make up the set $J_n$. 
Write $\mathcal{J}_n$ for the collection of joining edges at stage $n$, which include the endpoints:
$$ \mathcal{J}_n= \bigcup_{m=0}^{n-1} \bigcup_{w\in \{1, 2, 3\}^m} \{F_w\left(\overline{e_i}\right): i=1, 2, 3\}\ \ \ \ \ \ \ \ \ \ \ \ \  \text{ for } n\geq1$$
and $\calJ^\ast = \cup_{n\geq1} \calJ_n$.
Finally, define $\mathscr{E}_n := T_n \cup \mathcal{J}_n$.

For each $\ep = [\ep^-\to \ep^+] \in \mathscr{E}_n$, where $\ep^+, \ep^- \in\R^2$ denote the endpoints of the edge $\ep$, define $R^s_{\ep}:[0, L(\ep)] \to\R^2$ by 
$$R^s_\ep(t) = \frac{1}{L(\ep)}\left(\ep^+t+(L(\ep) -t)\ep^-\right),$$ 
where $L(\ep)$ denotes the length of the edge $\ep$. 

It was shown in \cite{ar} that
$$K_\al = \overline{\bigcup_{n\geq 0}\bigcup_{\ep\in \mathscr{E}_n} R^s_\ep([0, L(\ep)])},$$ 
where the closure is taken with respect to the Euclidean metric. It was also shown in \cite{ar} that the Euclidean metric, the effective resistance metric, and the geodesic metric on $K_\al$ are all equivalent. This means $K_\al$ satisfies Axiom 1, where the curves are the $R^s_\ep$ corresponding to the edges in the sets $\scrE_n$. It follows from the results in \cite{lapsar} mentioned previously that the direct sum of the $R^s_\ep$ curve triples is a spectral triple for $K_\al$. Denote this spectral triple by 
$$S(K_\al) =(C(K_\al), \ \Hil_\al, \ D_\al),$$ 
with representation $\pi_\al: C(K_\al) \to \mathcal{B}(\Hil_\al)$.

\subsection{Recovery of the Hausdorff dimension and geodesic metric on $K_\al$}
It was shown in \cite{ar} that the Hausdorff dimension of the stretched Sierpinski gasket of parameter $\al$ is
$$d_\al := \dfrac{\log(3)}{\log(2) - \log(1-\al)}.$$
We begin the section by showing that the spectral triple $S(K_\al)$ recovers the Hausdorff dimension of $K_\al$. First let us enumerate the edges in the set 
$\scrE := \bigcup_{n\geq0} \scrE_n$ and write $\scrE = \{\ep_1, \ep_2, \dots\}$. To simplify notation we write $R^s_j=R^s_{\ep_j}$. 

\begin{proposition}\label{newtr}
For $p> 1$ and each fixed $j\geq 1$, 
$$\text{tr}(|D_j|^{-p}) = \beta_p l_j^{p},$$
where $D_j$ is the operator in the spectral triple for the edge $R^s_j$, $L(R^s_j) = l_j$, and $\beta_p =  \dfrac{2^{p+1}(1-2^{-p})\zeta(p)}{\pi^p}$. 
Furthermore, for $p>1$, 
$$\text{tr}(|D_\al|^{-p}) =\beta_p \sum_{j=1}^\infty l_j^{p},$$ 
where $D_\al$ is the operator in the spectral triple for $K_\al$.
If $p >d_\al$, we have 
$$\text{tr}(|D_\al|^{-p}) = \dfrac{\beta_p2^p(3+3\al^p)}{2^p - 3(1-\al)^p}.$$ 
\end{proposition}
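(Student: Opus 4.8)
The plan is to establish the three assertions in order, since each rests on the previous one; the first two are short spectral computations, and the genuine content lies in the edge bookkeeping behind the closed form.

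For the first assertion I would use the spectrum of the curve operator recorded earlier, namely $\sigma(D_{l_j}) = \{(2k+1)\pi/(2l_j) : k \in \Z\}$. Since $|D_j|^{-p}$ is diagonal in the same eigenbasis $\{\phi^{l_j}_k\}$, its trace is
$$\tr(|D_j|^{-p}) = \sum_{k\in\Z}\left(\frac{2l_j}{|2k+1|\pi}\right)^p = \left(\frac{2l_j}{\pi}\right)^p\sum_{k\in\Z}\frac{1}{|2k+1|^p}.$$
As $k$ ranges over $\Z$, the quantity $|2k+1|$ hits every positive odd integer exactly twice, so $\sum_{k\in\Z}|2k+1|^{-p} = 2\sum_{m\geq0}(2m+1)^{-p} = 2(1-2^{-p})\zeta(p)$, obtained by splitting $\zeta(p)$ into its odd and even parts. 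This sum converges exactly for $p>1$ and reproduces $\beta_p l_j^p$ with $\beta_p = 2^{p+1}(1-2^{-p})\zeta(p)/\pi^p$. The second assertion is then immediate: because $D_\al = \bigoplus_j D_j$ and each $|D_j|^{-p}\geq0$, the spectrum of $|D_\al|^{-p}$ is the union with multiplicity of the individual spectra, so $\tr(|D_\al|^{-p}) = \sum_{j\geq1}\tr(|D_j|^{-p}) = \beta_p\sum_{j\geq1}l_j^p$, both sides being finite or infinite together.

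For the closed form I would reduce the problem to evaluating $S(p) := \sum_{j\geq1} l_j^p$ by grouping the edges of $\scrE = \bigcup_{n\geq0}\scrE_n = \bigl(\bigcup_n T_n\bigr)\cup\calJ^\ast$ by type and scale. The triangle edges split by level: at level $n$ there are $3^{n+1}$ of them (three per triangle $F_w(T)$ with $|w|=n$), each of length $((1-\al)/2)^n$, since the $F_j$ for $j=1,2,3$ are similarities with ratio $(1-\al)/2$. The joining edges split by generation: at generation $m$ there are $3^{m+1}$ segments $F_w(\overline{e_i})$ with $|w|=m$ and $i\in\{1,2,3\}$, each of length $\al\,((1-\al)/2)^m$, because each $e_i$ has length $\al$ and $F_w$ scales by $((1-\al)/2)^m$. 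Raising to the $p$ and summing the two geometric series, both with common ratio $r := 3(1-\al)^p/2^p$, gives $S(p) = 3/(1-r) + 3\al^p/(1-r)$; clearing denominators yields $S(p) = 2^p(3+3\al^p)/(2^p - 3(1-\al)^p)$, and multiplying by $\beta_p$ produces the stated expression. Both series converge precisely when $r<1$, i.e. $2^p > 3(1-\al)^p$, which rearranges to $p > \log3/(\log2 - \log(1-\al)) = d_\al$; this is exactly the stated hypothesis and coincides with the Hausdorff-dimension threshold found in \cite{ar}.

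The analytic steps — the $\zeta$-identity, the additivity of the trace over a direct sum, and the geometric-series evaluation — are routine, so the only place requiring care, and the \emph{main obstacle}, is the combinatorial accounting in the last step: confirming the counts $3^{n+1}$ at each level, attaching the correct length (in particular the extra factor $\al$ on the joining family), and checking that the families $T_0, T_1, \dots$ together with the joining generations are pairwise disjoint collections of distinct segments, so that nothing is double-counted or omitted. I would justify this disjointness directly from the decomposition $K_\al = W_\al\,\dot\cup\,J^\ast$ and the definitions of $T_n$ and $\calJ_n$, noting that the stretching separates the triangles so that no segment is shared between two triangles or across two levels.
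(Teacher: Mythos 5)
Your proposal is correct and follows essentially the same route as the paper: the identical eigenvalue computation with the odd-integer zeta identity, trace additivity over the direct sum, and the identical grouping of $\scrE$ into $3^{n+1}$ triangle edges of length $\left(\frac{1-\al}{2}\right)^n$ and $3^{m+1}$ joining edges of length $\al\left(\frac{1-\al}{2}\right)^m$, summed as geometric series with ratio $3\left(\frac{1-\al}{2}\right)^p$, convergent precisely for $p>d_\al$. One small caveat: the edge families are distinct as index sets but \emph{not} disjoint as point sets (a level-$n$ triangle edge literally contains two level-$(n+1)$ triangle edges and a joining edge as subsegments), so the no-double-counting claim should be justified by distinctness of edges in $\scrE$ (via differing lengths, and via the $W_\al$ versus $J^*$ decomposition to separate triangle edges from joining edges) rather than by geometric separation across levels — a point the paper itself sidesteps by simply asserting the multiplicities.
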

\begin{proof}
Recall that the eigenvalues of the operator $D_\al$ are given by
$$\bigcup_{j\geq1} \left\{\frac{(2k+1)\pi}{2l_j} : k\in \Z\right\}.$$
The values $L(R^s_j) = l_j$ are in the set 
$$\bigcup_{n\geq0}\left \{ \left(\frac{1-\al}{2}\right)^n,  \ \ \ \al\left(\frac{1-\al}{2}\right)^n\right \},$$ 
with multiplicity $3^{n+1}$ for each length like $\left(\frac{1-\al}{2}\right)^n$ or like $\al\left(\frac{1-\al}{2}\right)^n$. 
Assuming $p>1$, we have
$$\text{tr} (|D_j|^{-p}) = \sum_{k\in\Z} \left| \dfrac{(2k+1)\pi}{2l_j}\right|^{-p} 
= \dfrac{2^{p+1}l_j^p}{\pi^p}\sum_{k=0}^\infty \dfrac{1}{|2k+1|^p}
 = \dfrac{2^{p+1}l_j^p}{\pi^p}(1-2^{-p})\zeta(p)
= \beta_p l_j^p$$
and
\begin{align}
\text{tr} (|D_\al|^{-p})  & = \beta_p\sum_{j=1}^\infty l_j^p\\
&= \beta_p\left(\sum_{n=0}^\infty 3^{n+1}\left(\frac{1-\al}{2}\right)^{np} +\sum_{m=0}^\infty 3^{m+1} \al^p\left(\frac{1-\al}{2}\right)^{mp}\right)\\
&= \beta_p\left(3\sum_{n=0}^\infty \left(3\left(\frac{1-\al}{2}\right)^p\right)^{n} +3\al^p\sum_{m=0}^\infty \left(3\left(\frac{1-\al}{2}\right)^{p}\right)^m \right)\\
&=  \beta_p(3+3\al^p) \dfrac{2^p}{2^p - 3(1-\al)^p}\ ,\label{eq1}
\end{align}
where (\ref{eq1}) requires the further assumption that $p>d_\al$.
\end{proof} 

\begin{corollary}\label{newdim}
The spectral dimension, $\ds(K_\al)$, induced by the spectral triple $S(K_\al)$ for $K_\al$ is equal to $d_\al=\frac{\log(3)}{\log(2) - \log(1-\al)}$, the Hausdorff dimension of the stretched Sierpinski gasket of parameter $\al$. 
\end{corollary}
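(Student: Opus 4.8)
The plan is to unwind the definition of spectral dimension and reduce the statement to the convergence analysis already carried out in Proposition \ref{newtr}. By definition,
$$\ds(K_\al) = \inf\{p > 0 : \tr((I + D_\al^2)^{-p/2}) < \infty\},$$
whereas Proposition \ref{newtr} controls $\tr(|D_\al|^{-p})$ directly. So the first step is to check that these two traces have the same abscissa of convergence, after which the corollary becomes pure bookkeeping.

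First I would observe that the eigenvalues of $D_\al$ are $\frac{(2k+1)\pi}{2l_j}$ with $k\in\Z$ and $j\geq1$, and that every edge length satisfies $l_j\leq 1$ (the longest edges being the three sides of the outer triangle $T$, which have length $1$, while all other edges are strictly shorter). Hence every eigenvalue $\lambda$ of $D_\al$ satisfies $|\lambda|\geq \pi/2$, so $|D_\al|$ is bounded below. Comparing the two functional calculi eigenvalue by eigenvalue, one always has $(1+\lambda^2)^{-1/2}\leq |\lambda|^{-1}$, while $|\lambda|\geq \pi/2$ forces $1+\lambda^2\leq (1+4/\pi^2)\lambda^2$ and therefore $(1+\lambda^2)^{-1/2}\geq (1+4/\pi^2)^{-1/2}|\lambda|^{-1}$. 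Raising to the $p$-th power and summing over the spectrum yields a constant $0<c_1\leq 1$ with
$$c_1\,\tr(|D_\al|^{-p}) \leq \tr((I+D_\al^2)^{-p/2}) \leq \tr(|D_\al|^{-p}),$$
so the two series converge for exactly the same $p$, and $\ds(K_\al)=\inf\{p>0:\tr(|D_\al|^{-p})<\infty\}$.

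The second step is to read off the abscissa of convergence from Proposition \ref{newtr}. For $p>1$ we have $\tr(|D_\al|^{-p})=\beta_p\sum_j l_j^p$, and the geometric-series computation in the proof of that proposition shows this sum is finite precisely when $3\left(\frac{1-\al}{2}\right)^p<1$; since $\frac{1-\al}{2}<1$, this is equivalent to $p>\frac{\log 3}{\log 2-\log(1-\al)}=d_\al$, with divergence at $p=d_\al$. For $p\leq 1$, even a single edge already contributes $\sum_k|2k+1|^{-p}=\infty$, so the trace is infinite there. Finally I would note that for $\al\in(0,\tfrac13)$ one has $\log 2<\log 2-\log(1-\al)<\log 3$, whence $1<d_\al<\log_2 3$; in particular $d_\al>1$, so the threshold coming from the geometric series dominates the trivial $p\leq1$ obstruction. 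Combining these observations, the set of $p$ with finite trace is exactly $(d_\al,\infty)$, whose infimum is $d_\al$.

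I do not expect a serious obstacle: the content is essentially a convergence count layered on top of Proposition \ref{newtr}. The one point that genuinely requires care — and the only place the argument could fail if glossed over — is the passage from $(I+D_\al^2)^{-p/2}$, which appears in the \emph{definition} of $\ds$, to $|D_\al|^{-p}$, which is what Proposition \ref{newtr} computes. This passage is legitimate only because $0$ is bounded away from the spectrum of $D_\al$, which is exactly the property the translation term $\frac{\pi}{2\ell}I$ in the construction of $D_\ell$ was introduced to guarantee.
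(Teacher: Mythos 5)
Your proposal is correct and takes essentially the same route as the paper: reduce the definition of $\ds(K_\al)$ via $\tr((I+D_\al^2)^{-p/2})$ to the abscissa of convergence of $\tr(|D_\al|^{-p})$, then read that abscissa off from Proposition \ref{newtr}. The only difference is cosmetic --- where the paper invokes a limit comparison test, you give explicit two-sided eigenvalue bounds using the spectral gap $|\lambda|\geq \pi/2$, and you additionally spell out the $p\leq 1$ divergence and the check $d_\al>1$, which the paper leaves implicit.
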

\begin{proof}
An application of the limit comparison test will show that computing the abscissa of convergence of the series $\text{tr}((1+D_\al^2)^{-p/2})$ is the same as computing the abscissa of convergence of the series $\text{tr}(|D_\al|^{-p})$. It was shown in \cite{ar} that the Hausdorff dimension of $K_H$ is given by $d_\al=\frac{\log(3)}{\log(2) - \log(1-\al)}$. In Proposition \ref{newtr} we found that the abscissa of convergence of the series $\text{tr}(|D_\al|^{-p})$ is $d_\al$. It follows that 
$$\ds(K_\al)= \frac{\log(3)}{\log(2)-\log(1-\al)}.$$
\end{proof} 


Thus the spectral triple $S(K_\al)$ recovers the Hausdorff dimension of $K_\al$. Next we recover the geodesic metric on $K_\al$ by using the spectral metric induced by $S(K_\al)$.

Recall that $K_\al = W_\al \cup J^*$  and since the set $V^{s*}$ is dense in $W_\al$, the set $V^{s*} \cup J^*$ is dense in $K_\al$. 


\begin{proposition}\label{newax1} 
For any $p\in V^{s*}\cup J^*$ and any $q\in K_\al$, there is a path of minimal length from $p$ to $q$ which is a concatenation of (finite or countably many) triangle edges, joining edges, or segments of joining edges at the start or end of the path (possibly both). 
\end{proposition}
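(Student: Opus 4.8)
The plan is to prove a form of Lapidus--Sarhad's Axiom 2 adapted to the joining edges of $K_\al$, the only modification being that a partial joining-edge segment is permitted at each end of the geodesic. To begin, I would record that $K_\al$ is a compact, connected length space: by \cite{ar} the geodesic, Euclidean, and resistance metrics are all equivalent, and Axiom 1 holds for the curves $R^s_\ep$, so $K_\al$ is complete and locally compact. Hopf--Rinow for length spaces then provides a minimizing geodesic between any two points, and it suffices to exhibit, among these, one of the stated concatenation form.

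First I would peel off the endpoints. If $p\in J^*$, then $p$ lies in the interior of a unique joining edge $\eta$, and the interior of $\eta$ meets the rest of $K_\al$ only at the two endpoints of $\eta$, both of which lie in $V^{s*}$; hence any path issuing from $p$ must run along $\eta$ until it first reaches one of these endpoints, unless $q$ already lies on $\eta$ (in which case the whole geodesic is a single sub-segment of $\eta$). This produces the initial partial joining edge, and the same argument applied at $q$ produces the terminal one. After removing these, it remains to connect a \emph{vertex} $p'\in V^{s*}$ to a point $q'$ that is either a vertex or an arbitrary point of $W_\al$.

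For this reduced problem I would use the recursive cell decomposition $K_\al=\bigcup_{j=1}^3 F_j(K_\al)$ together with the fact that $W_\al$ is totally disconnected, which holds because the three corner maps have contraction ratio $\frac{1-\al}{2}<\frac12$ and pairwise separated images. The structural fact I would isolate is that each cell $F_w(K_\al)$ is attached to the remainder of $K_\al$ only at the finitely many endpoints of the joining edges incident to it, and that these attachment points all lie in $V^{s*}$; consequently any path crossing from the inside of a cell to its outside must pass through one of these boundary vertices. Since $q'$ is the intersection of a nested sequence of cells $F_{\omega|_n}(K_\al)$ shrinking to it, a minimizing geodesic is forced to pass successively through boundary vertices of these cells, and between two consecutive such vertices it can only move along edges, since no rectifiable arc can make progress \emph{within} the totally disconnected set $W_\al$ and the cell structure is itself self-similar. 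This produces the required concatenation of full triangle edges and joining edges; it is finite when $q'\in V^{s*}$ and countable, converging to $q'$, when $q'\in W_\al\setminus V^{s*}$.

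The hard part will be turning the ``exit through boundary vertices'' observation into a genuine statement about \emph{minimizing} geodesics, that is, showing not merely that an edge-concatenation joins $p$ to $q$ but that a shortest path must take this form with no shortcut bypassing the joining edges. I would derive this from the total disconnectedness of $W_\al$ (so that the portion of any arc lying in the dust has zero length and carries no connectivity) combined with the metric equivalence of \cite{ar} (so that length measured along edges is comparable to ambient distance and the greedy, cell-by-cell descent toward $q'$ actually realizes the infimum of path lengths). A secondary point, easily dispatched, is to check that when $q'\in W_\al\setminus V^{s*}$ the countable concatenation has finite total length equal to $d_{geo}(p',q')$; this follows from the geometric decay $L(F_w(\ep))\asymp\left(\frac{1-\al}{2}\right)^{|w|}$ of the edge lengths across successive levels.
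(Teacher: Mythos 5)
Your overall skeleton matches the paper's: Hopf--Rinow for existence of a minimizing geodesic, peeling off partial joining edges at the two ends, and then a descent through the nested cells $F_{\omega|_n}(K_\al)$ containing $q'$, using the fact that a cell meets the rest of $K_\al$ only at its (at most three) corner vertices, which lie in $V^{s*}$. The gap is precisely in the step you yourself flag as ``the hard part.'' You propose to show that between consecutive forced corner vertices the geodesic ``can only move along edges'' by invoking the principle that total disconnectedness of $W_\al$ forces any rectifiable arc to have zero length inside $W_\al$. That implication is false: a straight segment meets a fat Cantor set (which is totally disconnected) in a set of positive $\mathcal{H}^1$-measure, so total disconnectedness alone never controls length; nor can you dismiss $W_\al$ on dimension grounds, since $\dim_H(W_\al)=d_\al>1$. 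A correct version of the zero-length claim for arcs in $K_\al$ does hold, but it needs the self-similar cell structure in an essential, quantitative way (for instance a Lebesgue density-point argument exploiting that every ``turn'' in the address of a dust point forces a full joining-edge traversal whose length is a definite fraction of the ambient cell size); neither this nor anything equivalent is supplied by the metric equivalence results of \cite{ar} that you cite for it.

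Even granting the zero-length claim, you would not be done: the geodesic's intersection with $J^*$ is a priori a countable union of \emph{partial} joining edges, and you must still show these assemble into full edges away from the two endpoints, and rule out, for example, a maximal run along a triangle edge terminating at a non-vertex dust point, from which the geodesic escapes through the upper corners of the small cells containing that point (excluding this requires a separate comparison argument). The paper's proof sidesteps all of the measure theory: between consecutive forced vertices it enumerates the finitely many candidate edge paths --- at most three edges, of the six types $\ep_j$, $\ep_j\ast\ep_t$, $\ep_j\ast\ep_t\ast\ep_{j'}$, $\ep_t$, $\ep_t\ast\ep_j$, $\ep_t\ast\ep_j\ast\ep_{t'}$ --- and observes that the shortest of them realizes the distance between those two vertices. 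This exhibits \emph{some} minimizer of the required form, which is all the proposition asserts (``there is a path of minimal length,'' not ``every minimizer has this form''). Your argument instead tries to prove the stronger statement that a given minimizing geodesic itself runs along edges, which is both more than is needed and, as written, unsupported at its crux.
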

\begin{proof}
(Case $p\in V^{s*}$ and $q\in W_\al$) \\
Let $p\in V^{s*}$ and $q\in W_\al$. Let $m$ be the smallest integer such that $p$ and $q$ are in different $m$ cells, $F_w(W_\al)$ and $F_{v}(W_\al)$, where $|w| = |v| = m$. Suppose further that $p$ is an $m$-vertex for an edge in $F_w(W_\al)$ (and not just an endpoint for some further approximation). We are considering the space $K_\al$ with the metric $d_{geo}$ which is equivalent to the Euclidean metric. By the Hopf--Rinow theorem we know that $K_\al$ has minimizing geodesics. Let $\gamma$ be a minimal path between $p$ and $q$. Then there is a vertex $v_1$ of an edge in $F_{v}(W_\al)$ such that $\gamma$ passes through $v_1$. The portion of $\gamma$ which connects $p$ and $v_1$ must look like one of
$$\ep_j, \ \ \ \  \ \ep_j \ast \ep_t, \ \ \ \ \ \ep_j \ast \ep_t\ast \ep_{j'}, \ \ \ \ \  \ep_t, \ \ \ \ \ \ep_{t} \ast \ep_j, \ \ \ \ \ \ep_t\ast \ep_j \ast \ep_{t'},$$
where $\ep_j, \ep_{j'}\in \mathcal{J}_m$, $\ep_t, \ep_{t'} \in T_m$, and $\ep\ast \beta$ denotes the concatenation of the edges $\ep$ and $\beta$. Note that here, $\ep \ast \beta$ means that we first travel along the edge $\ep$ and then along the edge $\beta$. 
Note that if $p$ and $v_1$ can be joined by a path with one or two edges, then that path is unique of minimal length. There may be more than one path with three edges connecting $p$ and $v_1$ and these will look like $ \ep_j \ast \ep_t \ast \ep_{j'}$ and $\ep_t \ast \ep_j \ast \ep_{t'}$. In this case we take the shorter of the two paths, namely $\ep_{j} \ast \ep_t \ast \ep_{j'}$, which will be the unique path of minimal length. No path between $p$ and $v_1$ with four or more edges will be minimal. 

Write $\gamma_1$ for the concatenation of the edges connecting $p$ and $v_1$. Repeating this argument with $v_1$ and $q$, we get a unique path $\gamma_2$ of minimal length from $v_1$ to some vertex $v_2$ in an $m'$ cell $F_{w'}(W_\al)$, where $|w'| = m'>m$ and $q\in F_{w'}(W_\al)$. In this way, we get a countable concatenation of paths $\gamma_i$ whose lengths go to zero since the lengths of the edges making them up go to zero. Then we must have that $\gamma =  \overline{\cup_{i\geq1} \gamma_i}$ is the path of minimal length from $p$ to $q$. 

If $p$ is not an $m$-vertex in $F_w(W_\al)$, then choose an $m$-vertex, $u$, which lies on a shortest path between $p$ and $q$ and apply the previous argument on $u, p$ and $u, q$. We then reverse the path (which will consist of finitely many edges since $p$ is a vertex) between $u$ and $p$ and concatenate with the path between $u$ and $q$ to get a path from $p$ to $q$. 
\\

\noindent(Case $p\in V^{s*}$ and $q\in J^*$)\\
 If $q \in \ep_j$ for some $\ep_j \in \calJ^*$, then apply the argument above to get a path, $\gamma$, from $p$ to one of the endpoints $\ep_j^-$ or $\ep_j^+$ (whichever is closest to $p$ and hence yields the shortest path $\gamma$). Concatenate $\gamma$ with the line segment between $\ep_j^-$ or $\ep_j^+$ and $q$. This again yields a unique shortest path between $p$ and $q$. 
\\

\noindent(Case $p\in J^*$ and $q\in K_\al$)\\
 Suppose now that $p \in \ep_j$ for some $\ep_j \in \calJ^*$ and $q\in K_\al$. Without loss of generality, assume $\ep_j^-$ is closer to $q$ than  $\ep_j^+$. Apply the above argument to get a path between $\ep_j^-$ and $q$ and concatenate with the line segment connecting $p$ to $\ep_j^-$. This concludes the proof. 
\end{proof} 

In the following lemma we use the notation
$$\lip(f) = \sup\left\{\frac{|f(x)-f(y)|}{|x-y|}: x\neq y \in \R \right\}$$
for the Lipschitz seminorm of a function $f: \R \to \R^n$.


\begin{lemma}\label{lembddifflip}
Let $r: [0, \ell] \to X \subseteq \R^n$ be a curve and consider its spectral triple $(C(X), \ \Hil_\ell, \ D_\ell)$. For $f\in C(X)$, we have that $[D_\ell, \pi_\ell(f)]$ is bounded if and only if $f\circ r(|x|)$ is Lipschitz if and only if $f\circ r(|x|)$ is differentiable almost everywhere.
\end{lemma}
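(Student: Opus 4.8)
The plan is to reduce the entire statement to the already-available Lemma~\ref{lemfromcil} by observing that $\pi_\ell(f)$ is simply multiplication by the continuous even function $F(x) := f(r(|x|))$ on $[-\ell,\ell]$. Indeed, $\pi_\ell(f)h(x) = f(r(|x|))h(x) = F(x)h(x)$, so $[D_\ell,\pi_\ell(f)]$ is exactly the commutator analyzed in Lemma~\ref{lemfromcil} with $F$ playing the role of ``$f$''. Since $f\in C(X)$ and $r$ is continuous, $F$ is continuous, and Lemma~\ref{lemfromcil} applies verbatim. I would also record that $F$ is even, so $F(-\ell)=F(\ell)$, which is precisely the compatibility with the $2\ell$-periodic boundary conditions built into $D_\ell$; this will make the mean-zero normalization in Lemma~\ref{lemfromcil} automatic.

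First I would prove the equivalence ``$[D_\ell,\pi_\ell(f)]$ bounded $\iff$ $F$ Lipschitz.'' By Lemma~\ref{lemfromcil}, boundedness of the commutator is equivalent to the existence of an essentially bounded $g$ with $\int_{-\ell}^\ell g(t)\,dt = 0$ and $F(x) = F(0)+\int_0^x g(t)\,dt$. This representation says exactly that $F$ is absolutely continuous with essentially bounded a.e.\ derivative $g = iDF$, which is the standard characterization of Lipschitz functions on a compact interval, with $\lip(F)=\|g\|_\infty$. For the converse I would note that a Lipschitz $F$ is absolutely continuous, hence $F(x)=F(0)+\int_0^x F'(t)\,dt$ with $F'\in L^\infty$; the required normalization $\int_{-\ell}^\ell F' = F(\ell)-F(-\ell)=0$ holds automatically because $F$ is even. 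Thus condition~(3) of Lemma~\ref{lemfromcil} is satisfied and the commutator is bounded.

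Next I would treat ``$F$ Lipschitz $\iff$ $F$ differentiable almost everywhere.'' The forward implication is the classical fact that a Lipschitz (hence absolutely continuous) function of one real variable is differentiable a.e. The converse is where the genuine care is needed, and I expect it to be the main obstacle: bare a.e.\ differentiability is strictly weaker than being Lipschitz (the Cantor function is differentiable a.e.\ with a.e.\ vanishing derivative yet is not even absolutely continuous, and $\sqrt{\,\cdot\,}$ is differentiable a.e.\ on $[0,\ell]$ but not Lipschitz). The reading of the third condition that is forced both by the chain of equivalences and by Lemma~\ref{lemfromcil} is that $F$ admits an essentially bounded a.e.\ derivative that \emph{reconstructs} $F$ by integration, i.e.\ $F$ is absolutely continuous with $F'\in L^\infty$. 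Under this interpretation the converse is again precisely condition~(3) of Lemma~\ref{lemfromcil}, closing the loop. In the write-up I would make this interpretation explicit, so that the third clause is understood as a.e.\ differentiability together with the integral (absolute continuity) and essential boundedness of the derivative, rather than as the strictly weaker pointwise statement; this is the one point where the argument is not purely formal and requires the input of the fundamental theorem of Lebesgue calculus.
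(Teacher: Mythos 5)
Your proposal is correct, and like the paper's proof it makes Lemma~\ref{lemfromcil} do the real work, but the two arguments are organized differently. The paper proves ``bounded $\Leftrightarrow$ Lipschitz'' in two separate steps: for the forward direction it applies the commutator to the constant function $1$ to conclude that $f\circ r(|x|)\in\mathrm{Dom}(D_\ell)$ and then uses the integral representation from Lemma~\ref{lemfromcil} to read off the Lipschitz bound; for the converse it checks that the commutator agrees with $\pi_\ell(Df)$ on the dense set $C^1[-\ell,\ell]$, hence is densely defined with a bounded extension. You instead run condition (3) of Lemma~\ref{lemfromcil} in both directions, and your remark that $F(x)=f(r(|x|))$ is even, so that $\int_{-\ell}^{\ell}F'(t)\,dt=F(\ell)-F(-\ell)=0$ and the mean-zero normalization in Lemma~\ref{lemfromcil} is automatic, is precisely what is needed for the ``Lipschitz $\Rightarrow$ bounded'' direction to close through condition (3); the paper never makes this point explicit.

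The more substantive difference is your handling of the third clause. The paper dismisses ``Lipschitz $\Leftrightarrow$ differentiable a.e.'' as well known, but only the forward implication is a theorem; the converse is false, exactly as you say: the Cantor function (composed with $r=\mathrm{id}$ on $[0,1]$) is continuous, differentiable a.e., not Lipschitz, and by Lemma~\ref{lemfromcil} its commutator is unbounded, so the lemma as literally stated fails and the paper's one-line justification of that clause is incorrect. Your reinterpretation---a.e.\ differentiability together with the integral reconstruction by an essentially bounded derivative, i.e.\ absolute continuity with $F'\in L^\infty$, which is exactly condition (3) of Lemma~\ref{lemfromcil}---is the right repair. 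This is not a gap in your argument but a flaw in the paper that your write-up corrects; note that the directions actually used later (in Proposition~\ref{newlip} and in the metric-recovery theorem) are ``bounded $\Rightarrow$ Lipschitz'' and ``Lipschitz $\Rightarrow$ bounded'', both of which survive.
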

\begin{proof}
It is well known that a function $h: [0, \ell] \to \R^n$ is Lipschitz if and only if it is differentiable almost everywhere. Hence, it only remains to prove the first equivalence.

Suppose $f\in C(X)$ and $[D_\ell, \pi_\ell(f)]$ is bounded. For $g = 1\in L^2[-\ell, \ell]$, 
$$\|[D_\ell, \pi_\ell(f)]g\|_2 \leq \|[D_\ell, \pi_\ell(f)]\| \|g\|_2 = \|[D_\ell, \pi_\ell(f)]\|< \infty$$ and since
$$[D_\ell, \pi_\ell(f)]g = D_\ell\pi_\ell(f)g - \pi_\ell(f)D_\ell(g) = D_\ell( f\circ r(|x|)),$$
 it follows that $f\circ r (|x|)\in \text{Dom}(D_\ell)$. By Lemma \ref{lemfromcil}, there exists a bounded measurable function $g$ such that
 $$|f\circ r(|x|) - f\circ r(|y|)| = \left | \int_{|y|}^{|x|} g(t) dt\right|\leq  \int_{|y|}^{|x|} |g(t)| dt\leq \|g\|_\infty ||x|-|y||.$$
 This shows that $f\circ r(|x|)$ is Lipschitz and $\lip(f) \leq \|g\|_\infty = \|D f\|_\infty$.
 
Suppose now that $f\circ r(|x|)$ is Lipschitz and hence is differentiable almost everywhere. Then
$$[D_\ell, \pi_\ell(f)] g = \pi_\ell(D f) g$$
for $g\in C^1[-\ell, \ell]$. Thus $[D_\ell, \pi_\ell(f)]$ is densely defined and can be extended to the bounded operator $\pi_\ell(D f)$ on $L^2[-\ell, \ell]$.
\end{proof}

\begin{definition}
Let $\lip_g(\cdot)$ be the Lipschitz seminorm for the compact metric space $(K_\al, d_{geo})$ given by
$$\lip_g(f) = \sup\left\{ \frac{|f(x)-f(y)|}{d_{geo}(x, y)}: x, y\in K_\al, x\neq y \right\}.$$
\end{definition}

Note that since the geodesic metric and the Euclidean metric on $K_\al$ are equivalent, if $f$ is Lipschitz with respect to the Euclidean metric (i.e. $\lip(f) <\infty$), then $f$ is Lipschitz with respect to the geodesic metric (i.e. $\lip_g(f) < \infty$) and conversely. 

\begin{proposition}\label{newlip}
For any function $f\in C(K_\al)$ such that $\|[D_\al, \pi_\al(f)]\|< \infty$, 
$$ \|D_\al f\|_{\infty, K_\al} = \lip_g(f).$$
\end{proposition}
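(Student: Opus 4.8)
The plan is to reduce both sides of the identity to per-edge data and then compare. Since $D_\al = \bigoplus_j D_j$ and $\pi_\al = \bigoplus_j \pi_j$ are direct sums over the curves $R^s_j$, we have $\|[D_\al, \pi_\al(f)]\| = \sup_j \|[D_j, \pi_j(f)]\|$ and $\|D_\al f\|_{\infty, K_\al} = \sup_j \|D_j f\|_\infty$, where $D_j, \pi_j$ are the operator and representation of the edge triple for $R^s_j$. By Lemma \ref{lembddifflip} (together with Lemma \ref{lemfromcil}, which gives $D_j f = -i(f\circ R^s_j)'$ almost everywhere), each $f\circ R^s_j$ is Lipschitz, and since the Lipschitz constant of an absolutely continuous function equals the essential supremum of its derivative, $\|D_j f\|_\infty$ is exactly the Lipschitz constant of $f$ along the edge $\ep_j$ measured in arclength. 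The proof then reduces to showing that the supremum of these per-edge arclength Lipschitz constants equals the global geodesic seminorm $\lip_g(f)$.

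For the inequality $\|D_\al f\|_{\infty, K_\al} \leq \lip_g(f)$, I would first observe that every edge $\ep_j$ is a straight line segment in $\R^2$, so for two points $x, y$ on the same edge the arclength between them equals the Euclidean distance $|x-y|$. Since any path in $K_\al \subseteq \R^2$ has length at least $|x-y|$ while the edge itself realizes this length, we get $d_{geo}(x,y) = |x-y| =$ arclength along $\ep_j$. Hence the arclength Lipschitz constant of $f$ on $\ep_j$ is bounded by $\lip_g(f)$, and taking the supremum over $j$ yields the claim.

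For the reverse inequality I would first treat the case $x \in V^{s*}\cup J^*$, $y\in K_\al$. By Proposition \ref{newax1} there is a minimizing geodesic $\gamma$ from $x$ to $y$ that is a finite or countable concatenation of triangle edges, joining edges, and partial segments of joining edges at the ends; write its consecutive pieces as $\gamma_1, \gamma_2, \dots$ with junction points $x = v_0, v_1, v_2, \dots$, where $v_N \to y$. On each piece $f$ is Lipschitz in arclength with constant at most $\|D_\al f\|_{\infty, K_\al}$ (a sub-segment of a joining edge inherits the bound of its edge), so $|f(v_{i-1}) - f(v_i)| \leq \|D_\al f\|_{\infty, K_\al}\, L(\gamma_i)$. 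Telescoping the first $N$ terms and using $\sum_i L(\gamma_i) = L(\gamma) = d_{geo}(x,y)$ gives $|f(x) - f(v_N)| \leq \|D_\al f\|_{\infty, K_\al}\, d_{geo}(x,y)$; letting $N \to \infty$ and using continuity of $f$ with $v_N \to y$ yields $|f(x) - f(y)| \leq \|D_\al f\|_{\infty, K_\al}\, d_{geo}(x,y)$.

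To remove the restriction on $x$, I would approximate an arbitrary $x\in K_\al$ by a sequence $x_n \in V^{s*}\cup J^*$ (which is dense in $K_\al$), apply the previous bound to each $x_n$, and pass to the limit using continuity of $f$ and of $z \mapsto d_{geo}(z, y)$; the latter is continuous because $d_{geo}$ is equivalent to the Euclidean metric by \cite{ar}. This gives $\lip_g(f) \leq \|D_\al f\|_{\infty, K_\al}$ and completes the equality. The main obstacle is this reverse inequality: it rests on the edge-decomposition of minimizing geodesics from Proposition \ref{newax1}, and care is needed both in the limiting argument for countable concatenations (ensuring the piece-lengths sum to $d_{geo}(x,y)$ and that the partial terminal segments are controlled) and in the density step, where the equivalence of the geodesic and Euclidean metrics is exactly what guarantees that $d_{geo}(\cdot, y)$ is continuous.
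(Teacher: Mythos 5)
Your proposal is correct and follows essentially the same route as the paper: reduce $\|D_\al f\|_{\infty,K_\al}$ to per-edge Lipschitz constants via Lemma \ref{lembddifflip} and Lemma \ref{lemfromcil}, get the easy inequality from the fact that geodesic distance on a straight edge is arclength, and get the reverse inequality by telescoping along the edge decomposition of a minimizing geodesic from Proposition \ref{newax1}, finishing with a density argument. Your final limiting step (approximating an arbitrary endpoint by a dense sequence and using continuity of $f$ and of $d_{geo}(\cdot,y)$) is a slightly cleaner packaging of the paper's argument, which instead takes approximating points lying on the geodesic itself, but the underlying idea is the same.
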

\begin{proof}
By Lemma \ref{lembddifflip}, if $\|[D_\al, \pi_\al(f)]\|< \infty$ then $f$ is Lipschitz and differentiable almost everywhere. 
Since $K = \overline{\cup_{j\geq1} R^s_j}$,
\begin{align*}
\|D_\al f\|_{\infty, K_\al} &= \sup_{j} \{ \|D_jf\|_{\infty, R^s_j}\}\\
& = \sup_{j} \left\{ \left\|-i \frac{\partial f}{\partial x}\right\|_{\infty, R^s_j}\right\}\\
& = \sup_{j} \left\{ \sup_{p, q\in R^s_j} \left \{ \frac{|f(p) - f(q)|}{d_{geo}(p,q)}\right\}\right\}\\
&\leq \lip_g(f).
\end{align*}

For the reverse inequality first suppose $p\in V^{s*}\cup J^*$ and $q\in K_\al$. Then by Proposition \ref{newax1} there is a minimizing geodesic between $p$ and $q$ made of $R^s_j$ curves (i.e. triangle or joining edges) and segments of joining edges. Suppose first that the geodesic consists only of complete $R^s_j$ curves. Let $(p_k, p_{k+1})$ track the endpoints of these $R^s_j$, so $p=p_1$ and $\lim_{k\to \infty} p_k = q$. Then
\begin{align*}
|f(p) - f(p_k)| &\leq \sum_{j=1}^{k-1}|f(p_j) - f(p_{j+1})| \\
&\leq \sum_{j=1}^{k-1} d_{geo}(p_j, p_{j+1}) \|D_jf\|_{\infty, R_j}\\
&\leq \|D_\al f\|_{\infty, K_\al} \sum_{j=1}^{k-1} d_{geo}(p_j, p_{j+1}) \\
&= \|D_\al f\|_{\infty,K_\al}  d_{geo}(p, p_{k}) \\
\end{align*}
and by continuity of $f$ and $d_{geo}(p, x)$, 
$$\frac{|f(p) - f(q)|}{d_{geo}(p, q)} \leq \|D_\al f\|_{\infty, K_\al}.$$
Now suppose the minimizing geodesic between $p$ and $q$ looks like $\gamma_1 \ast \{R^s_{j_k}\} \ast \gamma_2$, where the $\gamma_1, \gamma_2$ are segments of joining edges (which can be empty or and entire edge) and are being concatenated with either finitely many edges, $\{R^s_{j_k}\}_{k \in \{1, 2, \dots, N\}}$, or infinitely many edges, $\{R^s_{j_k}\}_{k\in \{1, 2, \dots\}}$. We will allow for $\gamma_1$ to be an entire edge but will assume that it is nonempty. On the other hand, we may assume that $\gamma_2$ is not an entire edge, since otherwise it would be included as one of the $R^s_{j_k}$ curves, but $\gamma_2$ may be empty. In fact, if the number of curves $R^s_{j_k}$ is countably infinite then $\gamma_2 = \emptyset$. 

Set $p_0 = p \in \gamma_1$ and for $k\geq 1$, let $(p_k, p_{k+1})$ track the endpoints of the $R^s_{j_k}$ curves. Let $R^s_{j_0}$ be the edge on which $p$ lies (if $\gamma_1$ is an entire edge, then $R^s_{j_0} = \gamma_1$) and note that $p_1$ is an endpoint of both $R^s_{j_0}$ and $R^s_{j_1}$. If $\gamma_2 \neq\emptyset$, then the number of $R^s_{j_k}$ curves is finite; let $p_{N+1}$ be the endpoint which connects the last edge, $R^s_{j_N}$, to $\gamma_2$. Let $R^s_{j_{N+1}}$ be the edge with endpoint $p_{N+1}$ and containing $q$, and set $p_k = q$ for $k\geq N+1$. As before,
$$|f(p) - f(p_k+1)| \leq \|D_\al f\|_{\infty,K_\al}  d_{geo}(p, p_{k}),$$
where we have used the fact that for $k\geq 0$, the points $p_k, p_{k+1}$ are on the same edge $R^s_{j_k}$. Again by continuity, 
$$\frac{|f(p) - f(q)|}{d_{geo}(p, q)} \leq \|D_\al f\|_{\infty, K_\al}.$$ 
If $\gamma_2=\emptyset$, then repeat the above arguments with the sequence 
$$\{p_0=p\} \cup \{p_k: p_k \text{ endpoints of the edges }R^s_{j_k}, k\geq 1\},$$ 
to get the same estimate:
\begin{equation}
\frac{|f(p) - f(q)|}{d_{geo}(p, q)} \leq \|D_\al f\|_{\infty, K_\al},\label{est}
\end{equation}
which holds for any $p\in V^{s*}\cup J^*$ and $q\in K_\al$. 

Now let $p, q$ be arbitrary points in $K_\al$. Let $\gamma$ be a minimizing geodesic between $p$ and $q$. Since the set $V^{s*}\cup J^*$ is dense in $K_\al$, the path $\gamma$ intersects $V^{s*}\cup J^*$ at some point $r$. Let $\gamma_1$ be a minimizing geodesic between $p$ and $r$ and $ \gamma_2$ a minimizing geodesic between $r$ and $q$, where $\gamma_1, \gamma_2$ are made up of $R^s_j$ edges and portions of $R^s_j$ edges. The lengths of $\gamma_1$ and $\gamma_2$ must be the same as the lengths of the portions of $\gamma$ connecting $p$ and $r$, and $r$ and $q$. Let $\gamma_2$ be tracked by points $\{r_i\}$, where these points are endpoints of some $R^s_j$ edges or possibly points on a joining edge (as in the paths described above). Define $\gamma_{1i}$ to be the path obtained by concatenating the first $i$ parts of $\gamma_2$ with $\gamma_1$ at the point $r$. Using the estimate (\ref{est}) on the points $r_i\in V^{s*}\cup J^*$ and $p\in K_\al$ gives 
$$\frac{|f(p) - f(r_i)|}{d_{geo}(p, r_i)} \leq \|D_\al f\|_{\infty, K_\al}$$
and by continuity 
$$\frac{|f(p) - f(q)|}{d_{geo}(p, q)} \leq \|D_\al f\|_{\infty, K_\al}.$$ 
It now follows that $\lip_g(f) = \|D_\al f\|_{\infty,K_\al}$, as desired. 
\end{proof} 

\begin{theorem}
Let $d_{K_\al}(\cdot, \cdot)$ be the metric on $K_\al$ induced by the spectral triple $S(K_\al)$. Then for all $x, y\in K_\al$,
$$d_{K_\al}(x, y) = d_{geo}(x, y).$$
\end{theorem}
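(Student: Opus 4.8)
The plan is to reduce the spectral distance formula to the classical Lipschitz duality for a length space. The first step is to identify the operator norm $\|[D_\al, \pi_\al(f)]\|$ with the geodesic Lipschitz seminorm $\lip_g(f)$. Since $D_\al = \bigoplus_{j} D_j$ acts diagonally on $\Hil_\al = \bigoplus_j \Hil_{\ell_j}$ and $\pi_\al = \bigoplus_j \pi_{\ell_j}$, the commutator splits as a direct sum, giving $\|[D_\al, \pi_\al(f)]\| = \sup_j \|[D_j, \pi_j(f)]\|$. On each individual curve $R^s_j$, the discussion preceding Lemma \ref{lemfromcil} shows that $[D_j, \pi_j(f)]$ extends to the bounded multiplication operator $\pi_j(D_j f)$, whose operator norm is $\|D_j f\|_{\infty, R^s_j}$. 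Taking the supremum over $j$ yields $\|[D_\al, \pi_\al(f)]\| = \|D_\al f\|_{\infty, K_\al}$, which by Proposition \ref{newlip} equals $\lip_g(f)$ for every $f$ whose commutator is bounded.

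With this identification the admissibility condition $\|[D_\al, \pi_\al(f)]\|\le 1$ appearing in the definition of $d_{K_\al}$ becomes precisely $\lip_g(f)\le 1$, so that
$$ d_{K_\al}(x, y) = \sup\{\,|f(x) - f(y)| : f\in C(K_\al),\ \lip_g(f)\le 1\,\}. $$
The remaining task is to show this supremum recovers $d_{geo}$. For the inequality $d_{K_\al}(x,y)\le d_{geo}(x,y)$, any admissible $f$ is $1$-Lipschitz with respect to $d_{geo}$, so $|f(x)-f(y)|\le d_{geo}(x,y)$ immediately, and taking the supremum preserves this bound.

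For the reverse inequality I would exhibit a single admissible test function attaining the bound. Set $f_x(z) := d_{geo}(x, z)$. The triangle inequality for $d_{geo}$ gives $|f_x(z)-f_x(w)|\le d_{geo}(z,w)$, so $f_x$ is continuous and $\lip_g(f_x)\le 1$; since the geodesic and Euclidean metrics on $K_\al$ are equivalent, $f_x$ is also Euclidean-Lipschitz, and Lemma \ref{lembddifflip} then guarantees that $[D_\al, \pi_\al(f_x)]$ is bounded, so $f_x$ is admissible. As $|f_x(x)-f_x(y)| = |0 - d_{geo}(x,y)| = d_{geo}(x,y)$, the supremum is at least $d_{geo}(x,y)$. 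Combining the two inequalities gives $d_{K_\al}(x,y) = d_{geo}(x,y)$.

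The only genuine obstacle is the identification $\|[D_\al, \pi_\al(f)]\| = \lip_g(f)$, and that has effectively been carried out already in Proposition \ref{newlip}, whose proof used the explicit geodesic decomposition of minimizing paths from Proposition \ref{newax1} to handle the self-affine joining edges (precisely the feature that prevents Axiom 2 of \cite{lapsar} from applying directly). Once this equality is established, the theorem is the textbook duality between $1$-Lipschitz functions and distance on a length space, and verifying that $d_{geo}(x,\cdot)$ is an admissible test function is routine.
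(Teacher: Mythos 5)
Your proposal is correct and follows essentially the same route as the paper's proof: both reduce the theorem to the identification $\|[D_\al, \pi_\al(f)]\| = \|D_\al f\|_{\infty,K_\al} = \lip_g(f)$ (via Proposition \ref{newlip} together with Lemmas \ref{lemfromcil} and \ref{lembddifflip}), prove $d_{K_\al}\le d_{geo}$ by noting admissible functions are $1$-Lipschitz for $d_{geo}$, and prove the reverse inequality with the same test function $d_{geo}(x,\cdot)$. The only cosmetic difference is that you make the direct-sum decomposition $\|[D_\al,\pi_\al(f)]\| = \sup_j\|[D_j,\pi_j(f)]\|$ explicit, which the paper leaves implicit in its appeal to isometry of the representation.
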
	
\begin{proof}
The proof here relies on Proposition \ref{newlip} and is the same as the proof of Theorem 2 in \cite{lapsar}. We recreate it here, for the sake of completeness. 

Let $p, q\in K_\al$ and $f \in C(K_\al)$ such that $\|[D_\al, \pi_\al(f)]\|\leq 1$. By Lemma \ref{lemfromcil}, $f \in \text{Dom}(D_\al)$ and since $[D_\al, \pi_\al(f)]$ is bounded it must be the operator $\pi_\al(Df)$. Using that representations of $C^\ast$-algebras are isometries, we deduce that
$$\|D_\al f\|_\infty =  \|\pi_\al(D f)\| = \|[D_\al, \pi_\al(f)]\| \leq 1.$$ 
By Proposition \ref{newlip}, $\lip_g(f) = \|D_\al f\|_\infty\leq1$ and hence
$$\frac{|f(p)-f(q)|}{d_{geo}(p, q)} \leq 1;$$
so that $|f(p)-f(q)| \leq d_{geo}(p, q)$. This gives that $d_{K_\al}(p, q) \leq d_{geo}(p, q)$. 
For the reverse inequality consider the continuous function $h(x) = d_{geo}(x, q)$. Note that $\lip_g(h) = 1$ and hence, by Lemma \ref{lemfromcil} and Lemma  \ref{lembddifflip}, 
$\|[D_\al, \pi_\al (h)]\| \leq 1$.  Now since 
$$|h(p) - h(q)| = |0-d_{geo}(p, q)| = d_{geo}(p, q),$$
we have $d_{K_\al}(p, q) \leq d_{geo}(p, q)$ and hence $d_{K_\al}(p, q) = d_{geo}(p, q)$.
\end{proof}


\subsection{Recovery of the Hausdorff measure on $K_\al$}
In this section we show that the $d_\al$-dimensional Hausdorff measure, $\calH^{d_\al}$, is the unique self-affine measure satisfying 
$$\calH^{d_\al}(A) = \frac{1}{3} \sum_{i=1}^3 \calH^{d_\al} (F^{-1}_i(A))$$ 
for any Borel set $A \subseteq K_\al$. We then show that the measure defined by the Dixmier trace is the same as the $d_\al$-dimensional Hausdorff measure. Denote the Hausdorff dimension of a metric space $(X, d)$ by $\dim_H(X)$. 

Recall that $K_\al$ can be written in terms of its discrete part and its continuous part
$$K_\al = W_\al \cup J^*,$$ 
where this union is disjoint. Notice that for $A \subseteq K_\al$ it holds that $\dim_H(A \cap J^*) \leq \dim_{H} (J^*) = 1$ and $d_\al >1$ so $\calH^{d_\al}(A \cap J^*) =0$. This means
$$\calH^{d_\al}( A)=\calH^{d_\al}(A\cap W_\al)+\calH^{d_\al}(A \cap J^*) = \calH^{d_\al}(A\cap W_\al),$$
which shows that the $d_\al$-Hausdorff measure on $K_\al$ is the same as the $d_\al$-Hausdorff measure on $W_\al$.

The following is an easy consequence of the work in \cite{ar} and \cite{arf}. We give a proof, for the sake of completeness. 
\begin{proposition}\label{newselfsim}
The $d_\al$-dimensional Hausdorff measure on $K_\al$ satisfies the condition,
$$\calH^{d_\al}(A) = \frac{1}{3} \sum_{i=1}^3 \calH^{d_\al} (F^{-1}_i(A))$$ 
for any Borel set $A \subseteq K_\al$.
\end{proposition}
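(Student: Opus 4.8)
The plan is to reduce the asserted identity to the self-similar structure of the discrete part $W_\al$ and then to invoke the scaling behaviour of Hausdorff measure under similarities. First I would lean on the observation recorded just before the statement: since $\dim_H(J^*)=1<d_\al$, the continuous part $J^*$ is $\calH^{d_\al}$-null, so that $\calH^{d_\al}(A)=\calH^{d_\al}(A\cap W_\al)$ for every Borel $A\subseteq K_\al$. Reading $F_i^{-1}(A)$ as the preimage taken inside $K_\al$ (i.e. $F_i$ viewed as a self-map of $K_\al$), the same remark gives $\calH^{d_\al}(F_i^{-1}(A))=\calH^{d_\al}(F_i^{-1}(A)\cap W_\al)$, because $F_i^{-1}(A)\setminus W_\al\subseteq J^*$. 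This moves the entire computation onto $W_\al$.

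The key geometric input, which I would take from \cite{ar}, is twofold. On the one hand $W_\al$ is the attractor of the three contraction similarities $F_1,F_2,F_3$, each of common ratio $r:=\frac{1-\al}{2}$, so $W_\al=\bigcup_{i=1}^3 F_i(W_\al)$. On the other hand, in $K_\al$ the three corner copies are pulled apart by the joining edges; hence the images $F_1(W_\al),F_2(W_\al),F_3(W_\al)$ are pairwise disjoint. In particular the open set condition holds and the decomposition of $W_\al$ is a genuine disjoint union (an at-worst $\calH^{d_\al}$-null overlap would already suffice). Alongside this I record the arithmetic identity $r^{d_\al}=\tfrac13$: from $d_\al=\frac{\log 3}{\log 2-\log(1-\al)}$ one gets $\bigl(\tfrac{2}{1-\al}\bigr)^{d_\al}=3$, that is, $\bigl(\tfrac{1-\al}{2}\bigr)^{d_\al}=\tfrac13$.

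With these in hand the identity is a short computation. Since each $F_i$ is a similarity of ratio $r$, the scaling law for Hausdorff measure gives $\calH^{d_\al}(F_i(E))=r^{d_\al}\calH^{d_\al}(E)=\tfrac13\calH^{d_\al}(E)$ for every Borel $E$. Using finite additivity over the disjoint pieces, the bijectivity of $F_i\colon W_\al\to F_i(W_\al)$ (so that $A\cap F_i(W_\al)=F_i(F_i^{-1}(A)\cap W_\al)$), and the scaling law, I would chain
$$\calH^{d_\al}(A)=\calH^{d_\al}(A\cap W_\al)=\sum_{i=1}^3\calH^{d_\al}(A\cap F_i(W_\al))=\sum_{i=1}^3\tfrac13\,\calH^{d_\al}\bigl(F_i^{-1}(A)\cap W_\al\bigr)=\tfrac13\sum_{i=1}^3\calH^{d_\al}(F_i^{-1}(A)),$$
where the last equality undoes the reduction to $W_\al$ exactly as in the first paragraph.

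The only genuinely non-routine points are the geometric disjointness of the $F_i(W_\al)$ and the careful bookkeeping of the null set $J^*$: both $A\cap W_\al$ and each preimage $F_i^{-1}(A)$ must be arranged so that the discarded pieces really lie in a set of dimension $1$. Everything else—countable additivity of the Borel measure $\calH^{d_\al}$ over the disjoint images and the scaling law for similarities—is standard. I would emphasize that the identity itself does not require $0<\calH^{d_\al}(W_\al)<\infty$; that finiteness (also available from \cite{ar}) is only what guarantees that the resulting self-affine measure is nontrivial.
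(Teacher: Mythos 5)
Your proof is correct and takes essentially the same approach as the paper's: both arguments discard the joining part as $\calH^{d_\al}$-null (since $\dim_H(J^*)=1<d_\al$), use the disjointness of the three corner copies together with injectivity of the $F_i$ to identify $F_i^{-1}(A)$ with the corresponding piece, and apply the scaling law $\calH^{d_\al}(F_i(E)) = \left(\frac{1-\al}{2}\right)^{d_\al}\calH^{d_\al}(E) = \frac{1}{3}\calH^{d_\al}(E)$. The only cosmetic difference is bookkeeping: you decompose $A\cap W_\al$ over $W_\al = F_1(W_\al)\cup F_2(W_\al)\cup F_3(W_\al)$, while the paper decomposes $A$ itself as $F_1(A_1)\cup F_2(A_2)\cup F_3(A_3)\cup J$ with $J\subseteq J^*$ null.
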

\begin{proof}
Let $A\subseteq K_\al$. Then 
\begin{equation}
A = F_1(A_1)\cup F_2(A_2)\cup F_3(A_3)\cup J \label{union}
\end{equation}
 where $J\subseteq J^*$ and the unions are disjoint. Then
$\calH^{d_\al}(A) = \sum_{j=1}^3\calH^{d_\al}(F_j(A_j))$. Note that since the maps $F_j$, for $j=1, 2, 3$, are similarities of parameter $\dfrac{1-\al}{2}$, for $U\subseteq K_\al$, it holds that 
$$\calH^{d_\al}(F_j(U)) = \left(\dfrac{1-\al}{2}\right)^{d_\al} \calH^{d_\al}(U), \ \ \ \ \ \ \ j=1, 2, 3$$ 
and since 
$$\left(\dfrac{1-\al}{2}\right)^{d_\al}  =  \left(\dfrac{1-\al}{2}\right)^{\frac{\log (3)}{\log\left(\frac{2}{1-\al}\right)}} 
= \left(\dfrac{1-\al}{2}\right)^{\log_{\frac{2}{1-\al}}(3)} = 3^{-1}$$
we have $\calH^{d_\al}(F_j(U)) = \dfrac{1}{3} \calH^{d_\al}(U)$ for $j=1, 2, 3$. Note that $F_j^{-1}(A) = A_j$ since the union in (\ref{union}) is disjoint and the $F_j$, for $j=1, 2, 3$, are injective. It then follows that
$$\frac{1}{3} \sum_{j=1}^3 \calH^{d_\al}(F^{-1}_j(A)) = \frac{1}{3} \sum_{j=1}^3 \calH^{d_\al}(A_j) = \sum_{j=1}^3 \calH^{d_\al} (F_j(A))\\
= \calH^{d_\al}(A),$$
as was to be shown.
\end{proof}

For $n\geq 1$ define the maps $\psi_{\al, n}: C(K_\al) \to \R$ by 
$$\psi_{\al, n}(f) = 2^{-1}3^{-n} \sum_{\ep\in \calJ_n\setminus \calJ_{n-1}} \sum_{s \in \{+, -\}} f(\ep^s),$$
where $\ep^-, \ep^+$ are the endpoints of the edge $\ep\in \calJ_n\setminus \calJ_{n-1}$. 

We will need the following notation. For $n>n_0 \geq0$, let 
$$\mathcal{S}^n_{n_0, h}  = \{ \ep \in \calJ_n\setminus \calJ_{n-1}: \ep\subset \Delta_{n_0, h}\},$$ 
where $\Delta_{n_0, h}$ is a triangle in the $n_0$-th step in the construction of the gasket and these triangles have been enumerated clockwise by $h\in\{1, 2, \dots, 3^{n_0}\}$. Let $e_{n_0, h}^\pm$ denote the two endpoints of edges in $\calJ_{n_0}$ which also lie in $\Delta_{n_0,h}$. Note that the points $e_{n_0, h}^\pm$ do not belong to the same edge in $\calJ_{n_0}$. We use the notation with superscript $\pm$ for convenience and not to indicate that these are the ``right" and ``left" endpoints of an edge, as is the case with the notation $\ep^\pm$. See Figure \ref{ssgedges}.

\begin{figure}
\begin{center}
\begin{tikzpicture}[scale = 1.5]

\node(a) at (.8, 1.9655) {\begingroup\makeatletter\def\f@size{10}\check@mathfonts
$e_{2, 4}^-$\endgroup};
\node(a) at (1.4, 1.4) {\begingroup\makeatletter\def\f@size{10}\check@mathfonts$e_{2, 4}^+$\endgroup};

\node(a) at (1, 2.33) {\begingroup\makeatletter\def\f@size{10}\check@mathfonts$e_{2, 5}^-$\endgroup};
\node(a) at (2, 2.3) {\begingroup\makeatletter\def\f@size{10}\check@mathfonts$e_{2, 5}^+$\endgroup};

\node(a) at (1.8, 1.4) {\begingroup\makeatletter\def\f@size{10}\check@mathfonts$e_{2, 6}^-$\endgroup};
\node(a) at (2.2, 1.9655) {\begingroup\makeatletter\def\f@size{10}\check@mathfonts$e_{2, 6}^+$\endgroup};

\node[circle,fill=black,inner sep=0pt,minimum size=3.5pt] (a) at (1.1485, 1.9895){};
\node[circle,fill=black,inner sep=0pt,minimum size=3.5pt] (a) at   (1.3595, 1.6238) {};

\node[circle,fill=black,inner sep=0pt,minimum size=3.5pt] (a) at  (1.289, 2.2325) {};
\node[circle,fill=black,inner sep=0pt,minimum size=3.5pt] (a) at   (1.7109, 2.2325) {};

\node[circle,fill=black,inner sep=0pt,minimum size=3.5pt] (a) at (1.641, 1.6238) {};
\node[circle,fill=black,inner sep=0pt,minimum size=3.5pt] (a) at (1.8515, 1.9895) {};

%

\node(a) at (-.1, .3655) {\begingroup\makeatletter\def\f@size{10}\check@mathfonts
$e_{2, 1}^-$\endgroup};
\node(a) at (.4, -.2) {\begingroup\makeatletter\def\f@size{10}\check@mathfonts$e_{2, 1}^+$\endgroup};

\node(a) at (.1, .73) {\begingroup\makeatletter\def\f@size{10}\check@mathfonts$e_{2, 2}^-$\endgroup};
\node(a) at (1, .73) {\begingroup\makeatletter\def\f@size{10}\check@mathfonts$e_{2, 2}^+$\endgroup};

\node(a) at (.8, -.2) {\begingroup\makeatletter\def\f@size{10}\check@mathfonts$e_{2, 3}^-$\endgroup};
\node(a) at (1.2, .3655) {\begingroup\makeatletter\def\f@size{10}\check@mathfonts$e_{2, 3}^+$\endgroup};

\node[circle,fill=black,inner sep=0pt,minimum size=3.5pt] (a) at (0.211, .3655) {};
\node[circle,fill=black,inner sep=0pt,minimum size=3.5pt] (a) at   (0.421875, 0) {};

\node[circle,fill=black,inner sep=0pt,minimum size=3.5pt] (a) at  (0.35155, .609) {};
\node[circle,fill=black,inner sep=0pt,minimum size=3.5pt] (a) at  (0.7735, .609) {};

\node[circle,fill=black,inner sep=0pt,minimum size=3.5pt] (a) at (0.703, 0) {};
\node[circle,fill=black,inner sep=0pt,minimum size=3.5pt] (a) at (0.914, .3655) {};

%

\node(a) at (1.8, .3655) {\begingroup\makeatletter\def\f@size{10}\check@mathfonts
$e_{2, 7}^-$\endgroup};
\node(a) at (2.3, -.2) {\begingroup\makeatletter\def\f@size{10}\check@mathfonts$e_{2, 7}^+$\endgroup};

\node(a) at (1.92, .73) {\begingroup\makeatletter\def\f@size{10}\check@mathfonts$e_{2, 8}^-$\endgroup};
\node(a) at (2.9, .73) {\begingroup\makeatletter\def\f@size{10}\check@mathfonts$e_{2, 8}^+$\endgroup};

\node(a) at (2.8, -.2) {\begingroup\makeatletter\def\f@size{10}\check@mathfonts$e_{2, 9}^-$\endgroup};
\node(a) at (3.1, .3655) {\begingroup\makeatletter\def\f@size{10}\check@mathfonts$e_{2, 9}^+$\endgroup};

\node[circle,fill=black,inner sep=0pt,minimum size=3.5pt] (a) at (2.0859, .3655) {};
\node[circle,fill=black,inner sep=0pt,minimum size=3.5pt] (a) at  (2.2969, 0) {};

\node[circle,fill=black,inner sep=0pt,minimum size=3.5pt] (a) at  (2.2265, .6089) {};
\node[circle,fill=black,inner sep=0pt,minimum size=3.5pt] (a) at  (2.6485, .6089) {};

\node[circle,fill=black,inner sep=0pt,minimum size=3.5pt] (a) at (2.578, 0) {};
\node[circle,fill=black,inner sep=0pt,minimum size=3.5pt] (a) at  (2.789, .3655) {};


%
%

\draw[ - ] (0,0) to (3/2, 2.598); 
\draw[ - ] (3/2, 2.598) to (3,0); 
\draw[ - ] (0, 0) to (3, 0); 
\draw[ - ] (0.5625, .975) to (1.125, 0); 
\draw[ - ] (2.4375, .975) to (1.875,0); 
\draw[ - ] (0.9375, 1.6234) to (2.0625, 1.6234); 
\draw[ - ] (0.211, .3655) to (0.421875, 0); 
\draw[ - ] (0.35155, .609) to (0.7735, .609); 
\draw[ - ] (0.703, 0) to (0.914, .3655); 

\draw[ - ] (1.1485, 1.9895) to (1.3595, 1.6238); 
\draw[ - ] (1.289, 2.2325) to (1.7109, 2.2325); 
\draw[ - ] (1.641, 1.6238) to (1.8515, 1.9895); 

\draw[ - ] (2.0859, .3655) to (2.2969, 0); 
\draw[ - ] (2.2265, .6089) to (2.6485, .6089); 
\draw[ - ] (2.789, .3655) to (2.578, 0); 
\draw[ - ] (0.0791, .1370) to (0.158, 0); 
\draw[ - ] (0.1318, .2283) to (0.290, .2283); 
\draw[ - ] (0.264, 0) to (0.343, .137); 

\draw[ - ] (1.368, 2.37) to (1.446,2.232); 
\draw[ - ] (1.421, 2.46) to (1.579,2.46); 
\draw[ - ] (1.553, 2.233) to (1.632, 2.37); 

\draw[ - ] (2.657, 0.137) to (2.738,0); 
\draw[ - ] (2.710, .228) to (2.868,.228); 
\draw[ - ] (2.921, .137) to (2.842, 0); 

\draw[ - ] (1.954, 0.137) to (2.033,0); 
\draw[ - ] (2.007, .228) to (2.165,.228); 
\draw[ - ] (2.139, 0) to (2.212, .137); 

\draw[ - ] (.431, 0.746) to (.509,0.609); 
\draw[ - ] (.4834, .837) to (.642,.837); 
\draw[ - ] (.615, 0.609) to (.6945, .746); 

\draw[ - ] (.782, .137) to (.863, 0); 
\draw[ - ] (.835, .228) to (.993, .228); 
\draw[ - ] (1.046, .137) to (.9668, 0); 

\draw[ - ] (1.017, 1.761) to (1.096, 1.624); 
\draw[ - ] (1.07, 1.852) to (1.223, 1.852); 
\draw[ - ] (1.201, 1.624) to (1.28, 1.761); 

\draw[ - ] (2.306, .746) to (2.385, .609); 
\draw[ - ] (2.358, .837) to (2.517, .837); 
\draw[ - ] (2.490, .609) to (2.570, .746); 

\draw[ - ] (1.72, 1.761) to (1.8, 1.624); 
\draw[ - ] (1.773, 1.852) to (1.931, 1.852); 
\draw[ - ] (1.983, 1.761) to (1.904, 1.624); 
\end{tikzpicture} 
\caption{Example of edges $e_{n_0, h}^\pm$ for $n_0 = 2$ and $1\leq h\leq 9$.}
\label{ssgedges}
\end{center}
\end{figure}
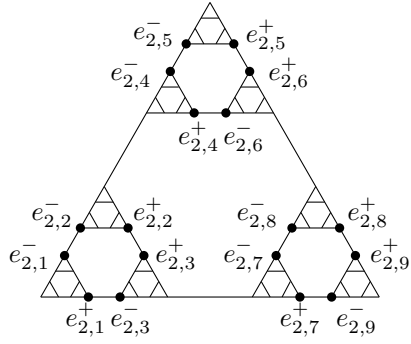

\begin{proposition}\label{psi}
Let $\calH^{d_\al}$ be the $d_\al$-dimensional Hausdorff probability measure on $K_\al$ and $\psi_\al: C(K_\al) \to \R$ given by
$$\psi_\al(f) = \int_{K_\al} f(x) \ d\calH^{d_\al}.$$
Then the sequence $\{\psi_{\al, n}\}$ converges to $\psi_\al$ in the weak-$^\ast$topology on the dual space of $C(K_\al)$.
\end{proposition}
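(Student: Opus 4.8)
The plan is to exploit the self-similar structure of the sampling functionals $\psi_{\al, n}$, which mirrors the self-affinity of $\calH^{d_\al}$ recorded in Proposition \ref{newselfsim}. The first step is to prove the one-step recursion
$$\psi_{\al, n}(f) = \frac{1}{3}\sum_{j=1}^3 \psi_{\al, n-1}(f\circ F_j), \qquad n\geq 2,$$
valid for every $f\in C(K_\al)$. This rests on the structural identity $\calJ_n\setminus \calJ_{n-1} = \bigsqcup_{j=1}^3 F_j(\calJ_{n-1}\setminus \calJ_{n-2})$, which follows from the defining formula for $\calJ_n$ together with the composition convention $F_j\circ F_v = F_{vj}$, the union being disjoint because the three edge families lie in the distinct cells $F_1(K_\al), F_2(K_\al), F_3(K_\al)$. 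Since each $F_j$ is affine, it carries the two endpoints of an edge $\eta$ to the two endpoints of $F_j(\eta)$, so reindexing the defining double sum of $\psi_{\al, n}$ along this decomposition and absorbing one factor of $3^{-1}$ gives the recursion. Iterating $m$ times yields, for $n>m$,
$$\psi_{\al, n}(f) = \frac{1}{3^m}\sum_{u\in\{1, 2, 3\}^m}\psi_{\al, n-m}(f\circ F_u),$$
where $F_u$ is an $m$-fold composition of $F_1, F_2, F_3$ and hence a contraction of ratio $\left(\tfrac{1-\al}{2}\right)^m$.

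Next I would record the matching \emph{exact} identity for $\psi_\al$. Rewriting Proposition \ref{newselfsim} in integrated form gives $\calH^{d_\al} = \tfrac13\sum_{j=1}^3 (F_j)_\ast \calH^{d_\al}$, so that $\psi_\al(g) = \tfrac13\sum_{j=1}^3 \psi_\al(g\circ F_j)$ for all $g\in C(K_\al)$, and iterating, $\psi_\al(f) = 3^{-m}\sum_{|u|=m}\psi_\al(f\circ F_u)$. Both $\psi_{\al, n}$ and $\psi_\al$ are unital positive functionals: $\psi_\al(1)=1$ since $\calH^{d_\al}$ is a probability measure, and $\psi_{\al, n}(1)=1$ since $\calJ_n\setminus \calJ_{n-1}$ contains exactly $3^n$ edges and thus $2\cdot 3^n$ endpoints, which the normalization $2^{-1}3^{-n}$ cancels.

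The convergence then follows from a uniform-continuity estimate. Fix $f\in C(K_\al)$ and $\ep>0$, choose $\delta>0$ from the uniform continuity of $f$ on the compact set $K_\al$, and then take $m$ so large that $\left(\tfrac{1-\al}{2}\right)^m \diam(K_\al)<\delta$. For each word $u$ of length $m$ the image $F_u(K_\al)\subseteq K_\al$ has diameter less than $\delta$, so $g_u := f\circ F_u$ oscillates by less than $\ep$ on $K_\al$; writing $g_u = g_u(x_0)+r_u$ with $\|r_u\|_\infty<\ep$ and using that every unital positive functional $\phi$ obeys $|\phi(r_u)|\leq \|r_u\|_\infty$, I obtain $|\psi_{\al, n-m}(g_u)-\psi_\al(g_u)|\leq 2\ep$ for all $n>m$. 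Subtracting the two iterated identities then gives
$$|\psi_{\al, n}(f)-\psi_\al(f)| \leq \frac{1}{3^m}\sum_{|u|=m}|\psi_{\al, n-m}(g_u)-\psi_\al(g_u)| \leq 2\ep$$
for every $n>m$, and since $\ep$ is arbitrary this proves $\psi_{\al, n}(f)\to\psi_\al(f)$, i.e. weak-$\ast$ convergence.

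The main obstacle is the bookkeeping behind the first step: verifying that the level-$n$ joining edges decompose exactly as three scaled copies of the level-$(n-1)$ joining edges, with the right composition order and with all endpoints landing in the correct cell, so that the recursion for $\psi_{\al, n}$ is an honest identity rather than an approximation. Once that is in place, everything else is a soft limiting argument. An alternative to the explicit estimate of the third paragraph would be to invoke weak-$\ast$ compactness of the probability functionals (Banach--Alaoglu, with $C(K_\al)$ separable): any subsequential limit $\mu$ inherits the relation $\mu(f)=\tfrac13\sum_j \mu(f\circ F_j)$ from the recursion, and Hutchinson's uniqueness of the invariant probability measure for the equal-weight IFS $\{F_1, F_2, F_3\}$ forces $\mu=\psi_\al$; but the direct estimate is more self-contained and avoids separately invoking uniqueness.
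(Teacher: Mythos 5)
Your proof is correct, and it reaches the conclusion by a genuinely different route than the paper's. The common core is the recursion $\psi_{\al,n+1}(f)=\frac{1}{3}\sum_{j=1}^{3}\psi_{\al,n}(f\circ F_j)$, which the paper derives by exactly the computation you sketch (from the decomposition of $\calJ_{n+1}\setminus\calJ_{n}$ into three scaled copies of $\calJ_{n}\setminus\calJ_{n-1}$), together with uniform continuity of $f$. The paper, however, uses the recursion only at the very end: it first proves a Cauchy-type estimate $|\psi_{\al,n}(f)-\psi_{\al,n_0}(f)|<\varep$ for $n>n_0$, by comparing the averages $u^{n}_{n_0,h}$ of $f$ over the level-$n$ joining-edge endpoints inside each triangle $\Delta_{n_0,h}$ with the two-point average at $e^{\pm}_{n_0,h}$; this yields an abstract weak-$\ast$ limit, which is then shown to satisfy the self-affinity identity by letting $n\to\infty$ in the recursion, and finally identified with $\int_{K_\al}\cdot\ d\calH^{d_\al}$ by invoking uniqueness of the measure with that property. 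You instead compare $\psi_{\al,n}$ directly with the prescribed limit: you integrate Proposition \ref{newselfsim} to obtain the matching iterated identity for $\psi_\al$ itself, and then unital positivity bounds each term by the oscillation of $f\circ F_u$, giving $|\psi_{\al,n}(f)-\psi_\al(f)|\leq 2\ep$ outright --- no Cauchy step, no Riesz representation of an abstract limit, and no appeal to Hutchinson's uniqueness theorem (your $2\ep$ estimate in effect re-proves the one instance of uniqueness that is needed). What the paper's arrangement buys is modularity: convergence of $\{\psi_{\al,n}\}$ is established with no prior knowledge of $\calH^{d_\al}$, which enters only at the identification stage. What yours buys is self-containedness, at the price of leaning on Proposition \ref{newselfsim} (hence on the normalization $\calH^{d_\al}(K_\al)=1$, needed for $\psi_\al$ to be unital) from the start. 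The bookkeeping you flag as the main obstacle is sound and is the same burden the paper carries: the three edge families $F_j(\calJ_{n}\setminus\calJ_{n-1})$, $j=1,2,3$, lie in the pairwise disjoint cells $F_j(K_\al)$, and the affine maps $F_j$ carry endpoints to endpoints, so the reindexing is an honest identity.
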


\begin{proof} 
Let $\varepsilon >0$. Since $f$ is uniformly continuous on $K_\al$, there exists an $n_0 \in\N$ such that for all $h\in \{1, \dots, 3^{n_0}\}$ and any two points $x, y$ inside or on the triangle $\Delta_{n_0, h}$, we have $|f(x)-f(y)|< \varep$. Let $n>n_0$ and define $u_{n_0, h}^n: C(K_\al) \to \R$ by
$$u_{n_0, h}^n (f) = \frac{1}{2\cdot3^{(n-n_0)}} \sum_{\ep\in \Sn} \sum_{s \in \{+, -\}} f(\ep^s).$$ 
Then 
\begin{align*}
& \left |u_{n_0, h}^n(f) - \frac{f(e_{n_0, h}^-)+f(e_{n_0, h}^+)}{2} \right|\\
& = \left| \frac{1}{2\cdot3^{(n-n_0)}} \sum_{\ep\in\mathcal{S}^n_{n_0, h}} \sum_{s \in \{+, -\}} f(\ep^s) -  \frac{1}{2}\sum_{s \in \{+, -\}} f(e_{n_0,h}^s)\right|\\
&=  \left| \frac{1}{2\cdot 3^{(n-n_0)}} \sum_{\ep\in\mathcal{S}^n_{n_0, h}} \sum_{s \in \{+, -\}} f(\ep^s) - \frac{1}{2\cdot 3^{(n-n_0)}} \sum_{\ep\in\mathcal{S}^n_{n_0, h}}\sum_{s \in \{+, -\}} f(e_{n_0,h}^s)\right|\\
&\leq \frac{1}{2\cdot 3^{(n-n_0)}} \sum_{\ep\in\mathcal{S}^n_{n_0, h}} \sum_{s \in \{+, -\}}\left | f(\ep^s) - f(e_{n_0,h}^s)\right|\\
&< \varep.
\end{align*}
Notice that $\psi_{\al, n}(f) =  3^{-n_0} \sum_{h=1}^{3^{n_0}} u_{n_0, h}^n (f)$; using the above estimate,
\begin{align*}
\left|\psi_{\al, n}(f) -\psi_{\al, n_0}(f)\right|
&= \left |3^{-n_0} \sum_{h=1}^{3^{n_0}} \left(u_{n_0, h}^n(f)- \frac{1}{2} \sum_{s\in\{+, -\}} f(e_{n_0, h}^s)\right)  \right|\\
& \leq3^{-n_0} \sum_{h=1}^{3^{n_0}}  \left |u_{n_0, h}^n(f)- \frac{1}{2} \sum_{s\in\{+, -\}} f(e_{n_0, h}^s)\right|\\
&<\varep.
\end{align*}
Thus the sequence $\{\psi_{\al, n}\}_{n\geq1}$ converges to a functional $\psi_\al$ in the weak-$^\ast$topology on the dual of $C(K_\al)$. We now show that $\psi_\al$ is self-affine and hence must induce the $d_\al$-dimensional Hausdorff measure, which is the unique measure on $SG$ with the self-affinity (really, self-similarity) property. Consider the desired equality:
\begin{equation}\label{sspsi}
\psi_\al(f) = \frac{1}{3} \sum_{j=1}^3 \psi_\al(f\circ F_j) \ \ \ \ \ \ \ \ \ \ \text{for } f\in C(K_\al).
\end{equation}
Indeed, notice that
\begin{align*}
\frac{1}{3} \sum_{j=1}^3 \psi_{\al, n}(f\circ F_j) 
&= \frac{1}{3} \sum_{j=1}^3 3^{-n}\cdot 2^{-1}\sum_{\ep\in \calJ_n\setminus \calJ_{n-1}} \sum_{s \in \{+, -\}} f(F_j(\ep^s)) \\
&= \frac{1}{3^{n+1}\cdot 2} \sum_{\ep\in \calJ_{n+1}\setminus \calJ_{n}} \sum_{s \in \{+, -\}} f(\ep^s) \\
&= \psi_{\al, n+1}(f)
\end{align*}
and letting $n\to \infty$ shows that (\ref{sspsi}) holds. Thus, $\psi_{\al, n} \to \psi_\al$, where $\psi_\al(f) = \int_{K_\al} f(x) \ d\calH^{d_\al}.$
\end{proof} 

%
%
\begin{lemma}\label{dixconst}
For the spectral triple $S(K_\al)$ of dimension $\ds=\ds(K_\al)=d_\al$,
$$Tr_w(|D_\al|^{-\ds}) = \dfrac{2^{\ds+1}(2^{\ds}-1)\zeta(\ds)(3+3\al^\ds)}{\ds\cdot\pi^\ds(2^\ds\log(2)-3(1-\al)^\ds\log(1-\al))}.$$
\end{lemma}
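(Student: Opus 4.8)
The plan is to apply Connes' residue formula (Theorem \ref{co}) to the positive operator $T = |D_\al|^{-\ds}$. First I would observe that since $T \geq 0$, its singular values satisfy $\mu_n(T)^s = \mu_n(|D_\al|^{-\ds s})$, so that $\sum_{n} \mu_n(T)^s = \tr(|D_\al|^{-\ds s})$ for $s > 1$, where all series converge absolutely in that range. Thus, once $T \in \dixdom$ is known, condition (1) of Theorem \ref{co} computes the Dixmier trace as the residue-type limit
$$\trw(|D_\al|^{-\ds}) = \lim_{s\to 1^+}(s-1)\,\tr(|D_\al|^{-\ds s}),$$
and the equivalence with condition (2) will in particular show the value is independent of the extended limit $w$.

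Next I would evaluate this limit using Proposition \ref{newtr}. Setting $p = \ds s$, so that $p > d_\al$ exactly when $s > 1$ and $p \to d_\al^+$ as $s \to 1^+$, Proposition \ref{newtr} gives
$$\tr(|D_\al|^{-\ds s}) = \frac{\beta_p\,2^p(3+3\al^p)}{2^p - 3(1-\al)^p}, \qquad s - 1 = \frac{p - d_\al}{d_\al}.$$
The crucial observation is that the denominator $2^p - 3(1-\al)^p$ vanishes at $p = d_\al$: indeed, the defining relation $d_\al = \frac{\log 3}{\log 2 - \log(1-\al)}$ is equivalent to $2^{d_\al} = 3(1-\al)^{d_\al}$. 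Hence the product $(s-1)\,\tr(|D_\al|^{-\ds s})$ is a genuine $0/0$ indeterminate form as $s\to 1^+$. Since the factor $\beta_p\,2^p(3+3\al^p)$ is continuous and nonzero at $p = d_\al$, I would pull it out of the limit and apply L'Hôpital's rule to $\frac{p - d_\al}{2^p - 3(1-\al)^p}$, differentiating the denominator to $2^p\log 2 - 3(1-\al)^p\log(1-\al)$; this yields
$$\lim_{p\to d_\al}\frac{p - d_\al}{2^p - 3(1-\al)^p} = \frac{1}{2^{d_\al}\log 2 - 3(1-\al)^{d_\al}\log(1-\al)}.$$

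Combining these and simplifying $\beta_p = \frac{2^{p+1}(1-2^{-p})\zeta(p)}{\pi^p} = \frac{2(2^p - 1)\zeta(p)}{\pi^p}$ evaluated at $p = \ds = d_\al$ would produce the stated formula after collecting the constants. The main obstacle I anticipate is not the algebra but the justification that $T = |D_\al|^{-\ds} \in \dixdom$, which is the hypothesis required to invoke Theorem \ref{co}; this follows from the pole structure established in Proposition \ref{newtr}, namely that the abscissa of convergence of $\tr(|D_\al|^{-p})$ equals $d_\al$ with a simple pole there, forcing $\mu_n(|D_\al|^{-1}) \sim C\,n^{-1/d_\al}$ and hence $\mu_n(T) \sim C'\,n^{-1}$, the borderline growth characterizing the Dixmier ideal. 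With that membership in hand, the computation above delivers the finite limit $L$, and Theorem \ref{co} then gives $\trw(|D_\al|^{-\ds}) = L$ for every admissible $w$.
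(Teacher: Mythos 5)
Your proposal is correct and follows essentially the same route as the paper's proof: both invoke Theorem \ref{co} to write $\trw(|D_\al|^{-\ds})=\lim_{s\to1^+}(s-1)\,\mathrm{tr}(|D_\al|^{-\ds s})$, substitute the closed form from Proposition \ref{newtr}, factor out the continuous part, and resolve the resulting $0/0$ form (using $2^{d_\al}=3(1-\al)^{d_\al}$) by L'H\^opital's rule. Your extra attention to verifying $|D_\al|^{-\ds}\in\dixdom$ is a point the paper leaves implicit, though your Tauberian-style inference of the asymptotic $\mu_n\sim C n^{-1/d_\al}$ from the pole alone is looser than the direct eigenvalue-counting bound one would actually use; this does not change the fact that the argument is the paper's argument.
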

\begin{proof} 
Using Theorem \ref{co} and Lemma \ref{newtr}, as well as the fact that $\ds >1$,
\begin{align*}
Tr_w(|D_\al|^{-\ds}) &= \lim_{s\to 1+} (s-1)\text{tr} (|D_\al|^{-\ds s})\\
&= \lim_{s\to 1+} (s-1) \dfrac{2^{\ds s+1}(1-2^{-{\ds s}})\zeta({\ds s})}{\pi^{\ds s}}\dfrac{2^{\ds s}(3+3\al^{\ds s})}{2^{\ds s} - 3(1-\al)^{\ds s}}\\
&=\dfrac{ 2^{\ds+1}(2^{\ds}-1)\zeta({\ds})(3+3\al^{\ds})}{\pi^{\ds}} \lim_{s\to 1+} (s-1) \dfrac{1}{2^{\ds s}- 3(1-\al)^{\ds s}}\\
&=\dfrac{ 2^{\ds+1}(2^{\ds}-1)\zeta({\ds})(3+3\al^{\ds})}{\pi^{\ds}} \lim_{s\to 1+}\  \dfrac{1}{\ds2^{\ds s}\log(2)- \ds3(1-\al)^{\ds s}\log(1-\al)}\\
&=\dfrac{ 2^{\ds+1}(2^{\ds}-1)\zeta({\ds})(3+3\al^{\ds})}{\ds\cdot \pi^{\ds}(2^{\ds}\log(2)- 3(1-\al)^{\ds}\log(1-\al))}.
\end{align*}
\end{proof} 

The spectral dimension $\ds = \ds(K_\al)$ is the same as the Hausdorff dimension $d_\al$. In what follows we will write $\ds$ in order to showcase how our operator algebraic tools recover fractal geometric data like the Hausdorff measure on $K_\al$. 

\begin{theorem}
The spectral triple $S(K_\al)$ recovers the $\ds$-dimensional Hausdorff measure, $\calH^\ds$, on $K_\al$ via the formula
$$ Tr_w(\pi_\al(f)|D_\al|^{-\ds}) = c_\ds \int_{K_\al} f \ d\calH^\ds$$
for all $f\in C(K_\al)$. Moreover,
$$c_\ds = \dfrac{2^{\ds+1}(2^{\ds}-1)\zeta(\ds)(3+3\al^\ds)}{\ds\cdot\pi^\ds(2^\ds\log(2)-3(1-\al)^\ds\log(1-\al))}.$$
\end{theorem}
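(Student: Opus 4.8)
The plan is to follow the template of the proof that the Dixmier trace recovers the self-affine measure on $K_H$, now using that the relevant self-affine measure is exactly the Hausdorff measure $\calH^\ds$ by Proposition \ref{newselfsim}, and that the approximating functionals $\psi_{\al, n}$ converge to $\int_{K_\al}(\cdot)\, d\calH^\ds$ by Proposition \ref{psi}. Write $\tau(f) := \trw(\pi_\al(f)|D_\al|^{-\ds})$. As in \cite{co}, $\tau$ is a positive linear functional on $C(K_\al)$ since $\trw\big(\pi_\al(f)|D_\al|^{-\ds}\big) = \trw\big(|D_\al|^{-\ds/2}\pi_\al(f)|D_\al|^{-\ds/2}\big)$ is non-negative for $f \geq 0$; and $\tau(I) = \trw(|D_\al|^{-\ds}) = c_\ds$ is computed in Lemma \ref{dixconst}. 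It therefore suffices to prove $\tau(f) = c_\ds\, \psi_\al(f)$ for every real $f \in C(K_\al)$.

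First I would set up the cell decomposition. Fix $f$ and $\varep > 0$, and by uniform continuity choose $n_0$ so that the oscillation of $f$ on each triangle $\Delta_{n_0, h}$, $h \in \{1,\dots,3^{n_0}\}$, is less than $\varep$. For each $h$, delete from $S(K_\al)$ all summands indexed by edges $R^s_j$ not contained in $\Delta_{n_0, h}$, obtaining a sub-spectral-triple whose Dixmier functional I denote $\tau_{n_0, h}$, and write $I_{n_0, h}, f_{n_0, h}$ for the restrictions of $1$ and $f$ to the portion $K_{n_0,h}$ of $K_\al$ inside $\Delta_{n_0, h}$. Since $\pi_\al(f)|D_\al|^{-\ds}$ is block diagonal along the decomposition of the edge set into the $3^{n_0}$ cells together with the finitely many joining edges of levels $\le n_0$ (which span between cells), and since those finitely many coarse joining edges contribute a trace-class operator on which $\trw$ vanishes, additivity of the Dixmier trace gives $\tau(f) = \sum_{h=1}^{3^{n_0}} \tau_{n_0, h}(f_{n_0, h})$. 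The edges inside $\Delta_{n_0, h} = F_w(T)$ are precisely $F_w(\scrE)$, a copy of the full edge set scaled by $\big(\tfrac{1-\al}{2}\big)^{n_0}$; hence the eigenvalues of the corresponding $|D|^{-\ds}$ are those of $|D_\al|^{-\ds}$ scaled by $\big(\tfrac{1-\al}{2}\big)^{n_0\ds} = 3^{-n_0}$, and homogeneity of $\trw$ yields the key scaling relation $\tau_{n_0, h}(I_{n_0, h}) = 3^{-n_0}\tau(I) = 3^{-n_0} c_\ds$.

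Next I would run the approximation. With $u_{n_0, h}^n$ and $\psi_{\al, n} = 3^{-n_0}\sum_h u_{n_0, h}^n$ as in Proposition \ref{psi}, each $u_{n_0, h}^n(f)$ is a convex combination of values of $f$ at endpoints of joining edges lying in $\Delta_{n_0, h}$, so the oscillation bound gives $-\varep\, I_{n_0, h} \le u_{n_0, h}^n(f)\, I_{n_0, h} - f_{n_0, h} \le \varep\, I_{n_0, h}$. Applying the positive functional $\tau_{n_0, h}$, using $\tau_{n_0, h}(I_{n_0, h}) = 3^{-n_0} c_\ds$, summing over $h$, and invoking $\tau(f) = \sum_h \tau_{n_0, h}(f_{n_0, h})$ together with $\psi_{\al, n}(f) = 3^{-n_0}\sum_h u_{n_0, h}^n(f)$, I obtain
$$-\varep\, c_\ds \le \psi_{\al, n}(f)\, c_\ds - \tau(f) \le \varep\, c_\ds.$$
Letting $n \to \infty$ and using $\psi_{\al, n} \to \psi_\al$ (Proposition \ref{psi}) gives $|c_\ds\, \psi_\al(f) - \tau(f)| \le \varep\, c_\ds$; since $\varep$ is arbitrary, $\tau(f) = c_\ds \int_{K_\al} f\, d\calH^\ds$, and the value of $c_\ds$ is the one recorded in Lemma \ref{dixconst}.

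The main obstacle is the justification of the Dixmier-trace decomposition in the second step: one must confirm that $\trw$ is additive across the block decomposition, that the contribution of the finitely many coarse joining edges is trace-class (hence Dixmier-null, which holds because each single edge already gives a trace-class $|D_j|^{-\ds}$ for $\ds > 1$ by Proposition \ref{newtr}), and that the Dixmier trace scales correctly under the similarity $F_w$ so that $\tau_{n_0, h}(I_{n_0, h}) = 3^{-n_0} c_\ds$. Everything after this reduces to the positivity/order argument and the weak-$\ast$ convergence already established in Proposition \ref{psi}.
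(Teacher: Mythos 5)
Your proposal is correct and follows essentially the same route as the paper's proof: the same level-$n_0$ cell decomposition with functionals $u_{n_0,h}^n$ and $\psi_{\al,n}$ from Proposition \ref{psi}, the same block-splitting of the Dixmier trace into the $3^{n_0}$ cells plus the coarse joining edges whose contribution vanishes, the same scaling relation $\tau_{n_0,h}(I_{n_0,h}) = 3^{-n_0}\trw(|D_\al|^{-\ds})$, and the same positivity/order argument followed by the weak-$\ast$ limit and Lemma \ref{dixconst}. The only cosmetic deviations are that you justify the vanishing of the joining-edge block by noting it is trace-class (hence Dixmier-null), where the paper instead invokes Theorem \ref{co} together with positivity, and that you make explicit the self-similar eigenvalue scaling behind the relation $\tau_{n_0,h}(I_{n_0,h}) = 3^{-n_0}\tau(I)$, which the paper asserts without elaboration.
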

\begin{proof} 
Let $\varep >0$ and $f\in C(K_\al)$. Let $n_0\in\N$ such that for all $h\in \{1, 2,\dots, 3^{n_0}\}$ and any two points $x, y$ inside or on the triangle $\Delta_{n_0, h}$, we have $|f(x)-f(y)|< \varep$. 
Choose $n>n_0$ and define $u_{n_0, h}^n: C(K_\al) \to \R$, as in the proof of Proposition \ref{psi}:
$$u_{n_0, h}^n (f) = \frac{1}{2\cdot3^{(n-n_0)}} \sum_{\ep\in \Sn} \sum_{s \in \{+, -\}} f(\ep^s).$$ 
Denote by $K_{n_0, h}$ the portion of $K_\al$ contained in the triangle $\Delta_{n_0, h}$. 
Let $I_{n_0, h}= 1$ in $C(K_{n_0, h})$, $f_{n_0, h} = f|_{K_{n_0, h}}$ in $C(K_{n_0, h})$, and $f_{J_{n_0}} = f|_{J_{n_0}}$ in $C(\overline{J}_{n_0})$.
Then
\begin{align*}
\left|u_{n_0, h}^n(f)I_{n_0, h}(x)-f_{n_0, h}(x)\right|
=&\left | \frac{1}{2\cdot 3^{(n-n_0)}} \sum_{\ep \in \Sn} \sum_{s \in \{+, -\}} f(\ep^s)I_{n_0, h}(x) - f_{n_0, h}(x)\right|\\
=&\left |  \frac{1}{2\cdot 3^{(n-n_0)}} \sum_{\ep \in \Sn} \sum_{s \in \{+, -\}} (f(\ep^s)I_{n_0, h}(x) - f_{n_0, h}(x))\right|\\
\leq & \  \frac{1}{2\cdot 3^{(n-n_0)}} \sum_{\ep \in \Sn} \sum_{s \in \{+, -\}}\left| f(\ep^s)I_{n_0, h}(x) - f_{n_0, h}(x)\right|\\
< &\ \varep ;
\end{align*}
 so
 \begin{equation}\label{ineq}
 (u_{n_0, h}^n(f)-\varep) I_{n_0, h} < f_{n_0,h} < (u_{n_0, h}^n(f)+\varep) I_{n_0, h}. 
 \end{equation}

Next note that one can define a spectral triple for $K_{n_0, h}$ and one for $\overline{J}_{n_0}$ by deleting the summands from the spectral triple for $K_\al$ which correspond to edges outside of $K_{n_0, h}$ or outside of $\overline{J}_{n_0}$, respectively. The argument that this construction does indeed gives a spectral triple for $K_{n_0, h}$ is the same as that for the spectral triple for $K_\al$. In the case of $\overline{J}_{n_0}$, that this deletion of summands still gives a spectral triple follows from Proposition 5.1 in \cite{cil}. Denote by $\trw( \pi_{n_0, h}(f_{n_0, h})|D_{n_0, h}|^{-\ds})$ and $\trw(\pi_{J_{n_0}}(f_{J_{n_0}})|D_{J_{n_0}}|^{-\ds})$ the positive linear functionals respectively associated to these triples. Using the fact that
$$\sigma(D_\al) = \bigcup_{h=1}^{3^{n_0}} \sigma(D_{n_0, h}) \cup \sigma(D_{J_{n_0}})$$ 
and that as operators $\pi_\al(f) = \oplus_{h=1}^{3^{n_0}} \pi_{n_0, h}(f_{n_0, h}) \oplus \pi_{J_{n_0}}(f_{J_{n_0}}),$ we have
 $$\trw(\pi_\al(f)|D_\al|^{-\ds}) = \sum_{h=1}^{3^{n_0}} \trw( \pi_{n_0, h}(f_{n_0,h})|D_{n_0, h}|^{-\ds})  + \trw(\pi_{J_{n_0}}(f_{J_{n_0}})|D_{J_{n_0}}|^{-\ds}). $$
Next we show that $\trw(\pi_{J_{n_0}}(f_{J_{n_0}})|D_{J_{n_0}}|^{-\ds})=0$. 
Note that 
 	$$\lim_{s\to 1^+} (s-1) \text{tr} (|D_{J_{n_0}}|^{-\ds s})
 	= \lim_{s\to 1^+} (s-1) \sum_{j=1}^{n_0} \beta_{\ds s} \ 3^{j} \al^{\ds s} \left( \frac{1-\al}{2}\right)^{\ds s(j-1)}\\
 	 = 0$$
since 
$$\sum_{j=1}^{n_0} \beta_{\ds s} 3^{j} \al^{\ds s} \left( \frac{1-\al}{2}\right)^{\ds s(j-1)}$$ 
converges as $s\to 1^+$ and hence 
$$\trw(|D_{J_{n_0}}|^{-\ds}) = \lim_{s\to 1^+} (s-1) \text{tr} (|D_{J_{n_0}}|^{-\ds s})  = 0.$$ 
For a continuous function $f_{J_{n_0}}$ on the closed set $\overline{J}_{n_0}$, there is an $M$ such that $|f_{J_{n_0}}| \leq M$. Since $\trw(\pi_{J_{n_0}}(\cdot)|D_{J_{n_0}}|^{-\ds})$ is a positive linear functional on $C(\overline{J}_{n_0})$, we know that
$$\trw(\pi_{J_{n_0}}(f_{J_{n_0}})|D_{J_{n_0}}|^{-\ds})\leq \trw(M|D_{J_{n_0}}|^{-\ds}) = M\trw(|D_{J_{n_0}}|^{-\ds}).$$
It follows that $\trw(\pi_{J_{n_0}}(f_{J_{n_0}})|D_{J_{n_0}}|^{-\ds})=0$ and 
$$\trw(\pi_\al(f)|D_\al|^{-\ds}) = \sum_{h=1}^{3^{n_0}} \trw( \pi_{n_0, h}(f_{n_0,h})|D_{n_0, h}|^{-\ds}).$$
Also note that 
$$\trw( \pi_{n_0, h}(I_{n_0,h})|D_{n_0, h}|^{-\ds})= 3^{-n_0}  \trw( \pi_\al(I)|D_\al|^{-\ds})= 3^{-n_0}  \trw(|D_\al|^{-\ds}).$$
Using the inequalities (\ref{ineq}), the fact that $\trw(\pi_\al(\cdot)|D_\al|^{-\ds})$ is a positive linear functional on $C(K_\al)$, and summing, gives
\begin{equation*}\label{trineq}
\sum_{h=1}^{3^{n_0}}(u_{n_0, h}^n(f)-\varep) (3^{-n_0}\trw(|D_\al|^{-\ds})) \leq \trw( \pi_\al(f)|D_\al|^{-\ds}) \leq \sum_{h=1}^{3^{n_0}} (u_{n_0, h}^n(f)+\varep) (3^{-n_0}\trw(|D_\al|^{-\ds})),
\end{equation*}
which is the same as
\begin{equation}\label{trineq2}
\left| \trw( \pi_\al(f)|D_\al|^{-\ds})-3^{-n_0}\trw(|D_\al|^{-\ds})\sum_{h=1}^{3^{n_0}}u_{n_0, h}^n(f)\right| \leq 3^{-n_0}\trw(|D_\al|^{-\ds})\varep.
\end{equation}
In Proposition \ref{psi} we showed that the functionals $\psi_{\al, n}$ converge to the functional $\psi_\al$ in the weak-$^\ast$ topology on the dual of $C(K_\al)$ and that for $n\geq n_0$ we can write $\psi_{\al, n}(f) = 3^{-n_0} \sum_{h=1}^{3^{n_0}} u_{n_0, h}^n (f)$; so (after possibly choosing $n_0$ to be larger)
$$\left|  \psi_\al(f) - 3^{-n_0} \sum_{h=1}^{3^{n_0}} u_{n_0, h}^n (f)\right| < \varep$$
and multiplying by $\trw( |D_\al|^{-\ds})$, 
\begin{equation}\label{trineq3}
\left|  \trw( |D_\al|^{-\ds})\psi_\al(f) - \trw(|D_\al|^{-\ds})3^{-n_0} \sum_{h=1}^{3^{n_0}} u_{n_0, h}^n (f)\right| \leq \trw(|D_\al|^{-\ds})\varep.
\end{equation}
Note by Lemma \ref{dixconst} that the value $\trw( |D_\al|^{-\ds})$ is positive so the inequality above is preserved. 
Then using the estimates (\ref{trineq2}) and (\ref{trineq3}),
\begin{align*}
&\left|\trw( \pi_\al(f)|D_\al|^{-\ds})- \trw(|D_\al|^{-\ds})\psi_\al(f)\right|\\
&\leq \left| \trw( \pi_\al(f)|D_\al|^{-\ds})-\trw(|D_\al|^{-\ds})\psi_{\al, n}(f)\right| +\left|\trw(|D_\al|^{-\ds})\psi_{\al, n}(f) -\trw(|D_\al|^{-\ds})\psi_\al(f)\right|\\
&\leq 3^{-n_0}\trw(|D_\al|^{-\ds})\varep + \trw(|D_\al|^{-\ds}) \varep\\
& = (3^{-n_0}+1) \ \trw(|D_\al|^{-\ds})\varep,
\end{align*}
from which it follows that 
$$\trw(\pi_\al(f)|D_\al|^{-\ds})= \trw(|D_\al|^{-\ds}) \psi_\al(f).$$
Using Lemma \ref{dixconst} we have
$$ \trw(\pi_\al(f)|D_\al|^{-\ds}) = c_\ds \int_{K_\al} f \ d\calH^{\ds},$$
as desired. 
\end{proof} 

\section{Conclusion}
The results in this paper are intended to further develop the intersection between fractal geometry, noncommutative geometry, and analysis on fractals. We have proved that the conjecture in \cite{lapsar} regarding the recovery of the Hausdorff measure with respect to the geodesic distance on the harmonic gasket by the Dixmier trace is false. 
The Dixmier trace on the harmonic gasket can recover the standard self-affine measure on the harmonic gasket, but this measure is not the same as the Hausdorff measure with respect to the geodesic distance. 
We have also shown that a spectral triple built on the edges of the stretched Sierpinski gasket can be used to recover the Hausdorff dimension, the geodesic metric, and the Hausdorff measure. This is especially interesting since the stretched Sierpinski gasket is a self-affine space and not a self-similar space. 

In the future we will consider the question of constructing a spectral triple on the harmonic gasket which will recover the Hausdorff measure. There are results concerning the asymptotics of the Laplacian on the harmonic gasket with respect to the Hausdorff measure (see \cite{kaj}) and it may prove useful to find a connection between this fractal analysis and the operator algebraic tools that come with spectral triples. Also, there are some results on the use of spectral triples to recover energy forms on fractal sets like the Sierpinski gasket; see \cite{cgis}. One can construct an energy form on the stretched Sierpinski gasket and it would be interesting to assemble a spectral triple that recovers the energy on the stretched Sierpinski gasket. 


\frenchspacing

\end{document}